\documentclass[11pt, reqno]{amsart}

\usepackage{amsmath}
\allowdisplaybreaks
\usepackage{amsfonts}
\usepackage{amssymb}
\usepackage[all]{xy}           

\usepackage{bbding}
\usepackage{txfonts}
\usepackage{amscd}

\usepackage[shortlabels]{enumitem}
\usepackage{ifpdf}
\ifpdf
  \usepackage[colorlinks,final,backref=page,hyperindex]{hyperref}
\else
  \usepackage[colorlinks,final,backref=page,hyperindex,hypertex]{hyperref}
\fi
\usepackage{tikz}
\usepackage[active]{srcltx}

\makeatletter

\newtheorem{df}{Definition}[section]
\newtheorem{thm}{Theorem}[section]
\newtheorem{cor}{Corollary}[section]
\newtheorem{rem}{Remark}[section]

\newtheorem{prop}{Proposition}[section]
\newtheorem{exa}{Example}[section]
\newtheorem{lem}{Lemma}[section]

\numberwithin{equation}{section}

\begin{document}

\date{}

\title[Some results on Hom-Novikov superalgebras and Hom-anti-pre-Novikov superalgebras]
{Some results on Hom-Novikov superalgebras and Hom-anti-pre-Novikov superalgebras}

\author{W Ben Abdelhafidh}
\address{University of Sfax, Faculty of Sciences of Sfax,   BP 1171, 3000 Sfax, Tunisia}
\email{wiembenabdelhafidh@gmail.com}


\begin{abstract}

This paper investigates the structural properties, operator representations, and algebraic duality of Hom-Novikov superalgebras and to introduce Hom-anti-pre-Novikov superalgebras. We establish a formal equivalence between Hom-Novikov superalgebras and admissible Hom-Novikov superalgebras via $q$-superalgebra extensions, proving that a Hom-pre-Lie superalgebra admits a Hom-Novikov structure if and only if its associated $2$-superalgebra is admissible. Introducing $q$-admissible pairs $(\mathcal P, \mathcal Q)$ on super-commutative Hom-associative superalgebras, we detail systematic constructions of Hom-Novikov superalgebras and their admissibility classes. Furthermore, we characterize anti-super-$\mathcal{O}$-operators on Hom-Lie superalgebras, showing that invertible operators induce compatible Hom-anti-pre-Lie structures and extend directly to admissible setups. Extending these frameworks to Poisson-type structures, we demonstrate that the sub-adjacent Hom-Lie superalgebra of a Hom-anti-pre-Lie Poisson superalgebra yields a transposed Hom-Poisson superalgebra. Finally, we establish the fundamental connection between Hom-anti-pre-Novikov superalgebras and invertible anti-super-$\mathcal{O}$-operators.
\end{abstract}

\subjclass[2010]{17A30, 
17A36, 17B10, 17B40, 17B60, 17B63, 17D25, 17D30.
}

\keywords{Hom-Novikov superalgebras, Hom-anti-pre-Novikov superalgebras, anti-super-$\mathcal{O}$-operators, transposed Hom-Poisson superalgebras}

\maketitle


\tableofcontents

\allowdisplaybreaks

\section{Introduction}

The theory of Novikov algebras originated from the pioneering works of Balinskii and Novikov \cite{Balinskii1985} and Gel'fand and Dorfman \cite{Gel} in their study of Hamiltonian operators, Poisson brackets of hydrodynamic type, and the formal variational calculus. A Novikov algebra is a vector space $\mathcal{A}$ equipped with a bilinear map $\star$ satisfying, for all $x,y,z\in\mathcal{A}$,
\begin{align}
(x\star y)\star z-x\star(y\star z)
&=(y\star x)\star z-y\star(x\star z),
\label{def-Nov-alg1}\\
(x\star y)\star z&=(x\star z)\star y.\label{def-Nov-alg2}
\end{align}
The first identity is the left-symmetry (or pre-Lie) identity, while the second is the right-commutativity identity. Since then, Novikov algebras have become one of the fundamental classes of nonassociative algebras owing to their deep connections with Lie algebras, left-symmetric (pre-Lie) algebras, integrable systems, differential geometry, vertex algebras, and mathematical physics. In particular, the commutator of a Novikov algebra naturally determines a Lie algebra, while every Novikov algebra is a left-symmetric algebra satisfying the additional right-commutativity identity. These remarkable structural properties have stimulated extensive investigations into their representations, cohomology, deformations, classifications, and applications \cite{Osborn1992,Xu1996,Bai2001}.

A major motivation for the development of Novikov algebras lies in their close relationship with Hamiltonian mechanics. Gel'fand and Dorfman \cite{Gel} showed that Novikov algebras provide the algebraic framework underlying Hamiltonian operators of hydrodynamic type. Subsequently, Xu \cite{Xu1996} established a systematic construction of Novikov algebras from commutative associative algebras endowed with derivations, thereby revealing a rich source of examples. Later, Bai and Meng further investigated realization problems and the classification of low-dimensional Novikov algebras, considerably enriching the structural theory and providing effective construction methods \cite{Bai2001,Bai-Meng}. Beyond their intrinsic algebraic interest, Novikov algebras have found numerous applications in Poisson geometry, conformal algebras, integrable hierarchies, operad theory, and related areas of mathematical physics.

The $\mathbb Z_2$-graded counterpart of Novikov algebras, namely Novikov superalgebras, was introduced to describe supersymmetric generalizations of Hamiltonian structures and quadratic conformal superalgebras. While preserving many of the fundamental properties of classical Novikov algebras, Novikov superalgebras exhibit new phenomena arising from the presence of the underlying grading. Their algebraic structure, classification, representation theory, and cohomological aspects have attracted increasing attention in recent years \cite{Kang-Chen,Ni-Chen}. In particular, the classification of low-dimensional Novikov superalgebras together with the study of quadratic Novikov superalgebras equipped with associative supersymmetric bilinear forms has revealed profound relationships among Novikov superalgebras, Lie superalgebras, and conformal superalgebras \cite{Kang-Chen,Ni-Chen}.

The emergence of Hom-type algebras has opened a broad new direction in the study of deformations and generalizations of classical algebraic structures. Hom-Lie algebras were first introduced by Hartwig, Larsson and Silvestrov in their investigation of $\sigma$-deformations of Witt and Virasoro algebras \cite{Hartwig-Larsson-Silvestrov}. Shortly thereafter, Makhlouf and Silvestrov established the general theory of Hom-associative and Hom-Lie algebras, providing a unified framework in which classical algebraic identities are twisted by linear self-maps \cite{Makhlouf-Silvestrov}. Since then, Hom-type structures have been successfully extended to numerous algebraic systems, including associative, Lie, pre-Lie, Poisson, Leibniz, dendriform, Zinbiel, and many related algebras \cite{D-Yau,Makhlouf-Silvestrov1}.

Motivated by these developments, Yau introduced Hom-Novikov algebras as Hom-type analogues of Novikov algebras and proved that every Novikov algebra together with an algebra endomorphism naturally induces a Hom-Novikov algebra \cite{D-Yau}. More precisely, a Hom-Novikov algebra is a Hom-pre-Lie superalgebra $(\mathcal A,\star,\alpha)$ such that
\begin{equation}\label{def-Hom-Nov-alg}
(x\star y)\star \alpha(z)=(x\star z)\star \alpha(y), \forall x,y,z\in\mathcal A,
\end{equation}

in the super setting, with the signs omitted in the ordinary (non-graded) case. Yau further established several construction methods based on Hom-associative algebras equipped with derivations, demonstrating that Hom-Novikov algebras retain many of the characteristic features of their classical counterparts while exhibiting richer deformation phenomena. Since then, Hom-Novikov algebras have been investigated from various perspectives, including Hom-Novikov-Poisson algebras, quadratic Hom-Novikov algebras, representations, cohomology, and deformation theory \cite{D-Yau1,Yuan-You}.

Another significant development in recent years is the introduction of admissible Novikov algebras. Motivated by the theory of anti-pre-Lie algebras and commutative $2$-cocycles on Lie algebras, Liu and Bai established the notion of admissible Novikov algebras and proved a one-to-one correspondence between Novikov algebras and admissible Novikov algebras through the theory of $q$-algebras \cite{Liu-Bai}. They also provided explicit constructions from commutative associative algebras endowed with derivations, offering a new perspective on the classical theory of Novikov algebras. More recently, these ideas have been extended to the super setting by introducing admissible Novikov superalgebras, which establish deep connections among Novikov superalgebras, anti-pre-Lie superalgebras, Lie superalgebras, and commutative super $2$-cocycles \cite{Chen-Liu-Chen}. These results not only generalize the correspondence between Novikov and admissible Novikov algebras but also furnish effective methods for constructing new superalgebraic structures and exploring their geometric and algebraic properties.

Closely related to these developments is the theory of anti-pre-Novikov (super)algebras, which has recently emerged as a natural nonassociative framework associated with symmetric quasi-Frobenius Novikov algebras and nondegenerate commutative $2$-cocycles \cite{Liu-Bai,Sun-Zeng}. Historically rooted in the study of Hamiltonian operators, variational calculus, and Poisson brackets of hydrodynamic type \cite{Chen-Chen-Ding}, anti-pre-Novikov algebras arise from suitable decompositions of Novikov structures through negative left and right multiplication representations \cite{Sun-Zeng}. This construction parallels the theories of anti-pre-Lie and anti-dendriform algebras, providing a unified mechanism for splitting algebraic identities and revealing deeper structural relationships \cite{Gao-Liu-Bai,Liu-Bai}. In the $\mathbb Z_2$-graded setting, these ideas naturally extend to anti-pre-Novikov superalgebras, admissible Novikov superalgebras, and their interactions with Lie superalgebras and quadratic conformal structures \cite{Chen-Liu-Chen}. Furthermore, the study of double constructions, anti-Rota-Baxter operators, and the anti-pre-Novikov Yang-Baxter equation (APN-YBE) has established important connections with factorizable and quasitriangular bialgebras, highlighting the rich interplay between anti-pre-Novikov (super)algebras and contemporary nonassociative algebraic structures \cite{Gao-Guo-Han-Zhang,Li-Hong,Sun-Zeng}. Motivated by these developments, we introduce in this paper the notion of a Hom-anti-pre-Novikov superalgebra, which constitutes a Hom-type extension of anti-pre-Novikov superalgebras obtained by incorporating an even twisting map into their defining identities. We systematically investigate the fundamental algebraic properties of these structures and develop several structural constructions, thereby extending the theory of anti-pre-Novikov superalgebras to the broader framework of Hom-algebraic structures.\\

The paper is organized as follows. In Section 2, we recall the basic notions of Hom-anti-pre-Lie superalgebras and anti-super-$\mathcal O$-operators, and we establish several structural results concerning strong operators and compatible Hom-Lie structures. Section 3 is devoted to Hom-Novikov superalgebras and admissible Hom-Novikov superalgebras, including their correspondence, operator interpretation, constructions from super-commutative Hom-associative superalgebras, and the associated Poisson-type structures. In Section 4, we introduce Hom-anti-pre-Novikov superalgebras, study their representations, and develop the theory of anti-super-$\mathcal O$-operators in this new setting.

The results obtained in this work enrich the theory of Hom-type superalgebras by providing new algebraic structures, new operator correspondences, and new construction techniques. They also reveal deep connections among Hom-Novikov superalgebras, Hom-anti-pre-Lie superalgebras, admissible operator pairs, and super-commutative cocycle structures, thereby opening new perspectives for further developments in Hom-superalgebra theory and its applications to mathematical physics and deformation theory.\\

Throughout this paper, unless otherwise specified, all vector spaces are assumed to be
finite-dimensional over a field $\mathbb{K}$ of characteristic $0$.

\section{Some basics on Hom-anti-pre-Lie superalgebras}

\begin{df}(\cite{Faouzi-Abdenacer})\label{Ex-H-Lie-sup}
A \textbf{Hom-Lie superalgebra} is a triplet $(\mathcal{A}, [\cdot,\cdot],\alpha)$ consisting of a $\mathbb{Z}_2$-graded vector space $\mathcal{A}$, an even bilinear map $[\cdot,\cdot] : \mathcal{A}\otimes \mathcal{A} \longrightarrow \mathcal{A},~~( \;[\mathcal{A}_i,\mathcal{A}_j]\subseteq \mathcal{A}_{i+j},
~~\forall~~i,j\in \mathbb{Z}_2\;)$ and an even linear map  $\alpha:\mathcal{A}\rightarrow\mathcal{A}$  satisfying:
\begin{eqnarray}
 \label{H-skwesym}
 &&[x,y] = -(-1)^{|x||y|}[y,x]~~ \text{( super-skew-symmetry)},\\
\label{H-sJ}
&&  [\alpha(x),[y,z]]=[[x,y],\alpha(z)]+(-1)^{|x||y|}[\alpha(y),[x,z]]~~ \text{(Hom super-Jacobi identity),}
\end{eqnarray}
$\forall\ x,y,z \in \mathcal{H}(\mathcal{A})$.
\end{df}
\begin{rem}
We recover the classical Lie superalgebra  when $\alpha=id$.
\end{rem}
\begin{exa}
Let $\mathfrak{g} = \mathfrak{g}_0 \oplus \mathfrak{g}_1$ be a 3-dimensional $\mathbb{Z}_2$-graded vector space over a field $\mathbb{K}$, with basis $\{e_1, e_2, e_3\}$ defined by: $$\mathfrak{g}_0 = \text{span}\{e_1\} \quad \text{and} \quad \mathfrak{g}_1 = \text{span}\{e_2, e_3\}.$$

Define the bilinear operation $[\cdot, \cdot] : \mathfrak{g} \times \mathfrak{g} \to \mathfrak{g}$ on basis elements by:
$$
[e_1, e_2] = -[e_2, e_1] = e_3,
$$
with all other basis brackets vanishing identically.\\ 
Define an even linear map $\alpha : \mathfrak{g} \to \mathfrak{g}$ by:
$$
\alpha(e_1) = e_1 , \quad \alpha(e_2) = e_2 + e_3, \quad \alpha(e_3) = e_3.
$$
Then, $(\mathfrak{g}, [\cdot, \cdot], \alpha)$ is a $3$-dimensional Hom-Lie superalgebra.    
\end{exa}

\begin{df}(\cite{wi-oth})
A \textbf{Hom-anti-pre-Lie superalgebra} (also called \textbf{Hom-anti-left-symmetric superalgebra}) is a triplet $(\mathcal A,\circ,\alpha)$ consisting of a $\mathbb Z_2$-graded vector space $\mathcal{A}$, an even bilinear map $\circ:\mathcal A\times\mathcal A\to\mathcal A$ and an even linear map $\alpha:\mathcal A\to\mathcal A$ such that, the following conditions hold   \begin{eqnarray}
 &&\alpha(x)\circ(y\circ z)-(-1)^{|x||y|} \alpha(y)\circ(x\circ z)=[y,x]_\circ \circ \alpha(z),\label{cond-Hom-ant-pre-Lie1}  \\
 &&(-1)^{|x||z|}[x,y]_\circ\circ \alpha(z)+(-1)^{|x||y|}[y,z]_\circ\circ \alpha(x)+(-1)^{|y||z|}[z,x]_\circ\circ \alpha(y)=0\label{cond-Hom-ant-pre-Lie2},
\end{eqnarray} 
for any $x,y,z\in\mathcal{H}(\mathcal{A})$, where \begin{equation}\label{comp-Lie-struc}
[x,y]_\circ=x\circ y-(-1)^{|x||y|}y\circ x.
\end{equation}
A Hom-anti-pre-Lie superalgebra $(\mathcal A,\circ,\mu)$ is called \textbf{regular} if $\alpha$ is bijective, and called \textbf{multiplicative} if $\alpha$ is an algebra endomorphism, i.e., $\alpha(x\circ y)=\alpha(x)\circ\alpha(y)$ for any $x,y\in\mathcal A$.

\end{df}
\begin{rem}\
 \begin{enumerate}
\item If $\alpha=id$, we recover \textbf{anti-pre-Lie superalgebras} structures.
\item Condition \eqref{cond-Hom-ant-pre-Lie1} is equivalent to
\begin{equation}\label{cond-equiv-Hom-anti-pre-Lie1}
\mathfrak{aas}_\circ^\alpha(x,y,z)=(-1)^{|x||y|}\mathfrak{aas}_\circ^\alpha(y,x,z),\;\forall x,y,z\in\mathcal{H}(\mathcal A). 
\end{equation}
 \end{enumerate}   
\end{rem}
\begin{exa}(\cite{wi-oth})
Let $\mathcal A=\mathcal A_{\overline 0}\oplus \mathcal A_{\overline 1}$ be a
$\mathbb Z_2$-graded vector space defined by
\[
\mathcal A_{\overline 0}=\langle e_1,e_2\rangle,
\qquad
\mathcal A_{\overline 1}=\langle e_3\rangle .
\]
Define an even bilinear map $
\circ:\mathcal A\otimes \mathcal A \longrightarrow \mathcal A
$
by
\[
e_1\circ e_2 = e_3,\qquad
e_2\circ e_1 = -\,e_3,
\]
and all other products are zero.
Let $\alpha:\mathcal A\to\mathcal A$ be the even linear map defined by
\[
\alpha(e_1)=e_1,\qquad
\alpha(e_2)=2e_2,\qquad
\alpha(e_3)=2e_3.
\]
Then  the triplet $(\mathcal A,\circ,\alpha)$
is a $3$-dimensional Hom-anti-pre-Lie superalgebra.
\end{exa}

Recall that $(\mathcal A,\circ,\alpha)$ is called \textbf{Hom-Lie-admissible superalgebra} where $\mathcal A$ is vector superspace with an even bilinear map $\mu:\mathcal A\times\mathcal A\to\mathcal A$ and an even linear map $\alpha:\mathcal A\to\mathcal A$, if the product $[\cdot,\cdot]:\mathcal A\times\mathcal A\to\mathcal A$ defined by Eq. \eqref{comp-Lie-struc} makes $(\mathcal A,[\cdot,\cdot],\alpha)$ a Hom-Lie superalgebra (see \cite{Faouzi-Abdenacer} for more details). We call $(\mathcal A,[\cdot,\cdot],\alpha)$ in this case the \textbf{sub-adjacent Hom-Lie superalgebra} of  $(\mathcal A,\circ,\alpha)$ denoted by $\mathfrak g(\mathcal A)$ and $(\mathcal A,\circ,\alpha)$ is called a \textbf{compatible} (Hom-Lie admissible) superalgebra structure on the Hom-Lie superalgebra $\mathfrak g(\mathcal A)$.
\begin{prop}\label{equiv-H-ant--pre-Lie-Hom-Lie admiss}
Let $\mathcal A$ be a vector superspace with an even bilinear map $\circ:\mathcal A\times\mathcal A\to\mathcal A$ and an even linear map $\alpha:\mathcal A\to\mathcal A$. Then the
following assertions are equivalent: 
\begin{enumerate}
\item $(\mathcal A,\circ,\alpha)$ is a Hom-anti-pre-Lie superalgebra.
\item For $(\mathcal A,\circ,\alpha)$, Eq. \eqref{cond-Hom-ant-pre-Lie1} and for any $x,y,z\in\mathcal H(\mathcal A)$, the following equation hold
\begin{equation}\label{cond-equiv1}
\circlearrowleft_{x,y,z}(-1)^{|x||z|}\alpha(x)\circ[y,z]_\circ=(-1)^{|x||z|}\alpha(x)\circ[y,z]_\circ+ (-1)^{|x||y|}\alpha(y)\circ[z,x]_\circ+(-1)^{|y||z|}\alpha(z)\circ[x,y]_\circ=0.
\end{equation}
\item $(\mathcal A,\circ,\alpha)$ is a Hom-Lie admissible superalgebra, such that $(\mathcal A,-\mathfrak L_\circ,\alpha)$ is a representation of the sub-adjacent Hom-Lie superalgebra $(\mathfrak g(\mathcal A),[\cdot,\cdot]_\circ,\alpha)$, where $\mathfrak L_\circ:\mathfrak g(\mathcal A)\to End(\mathcal A)$ is an even linear map defined by $\mathfrak L_\circ(x)(y)=x\circ y,\;\forall x,y\in\mathcal H(\mathcal A)$.
\end{enumerate}
\end{prop}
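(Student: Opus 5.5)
I will prove the cycle of equivalences $(1)\Leftrightarrow(2)$ and $(1)\Leftrightarrow(3)$, using Eq. \eqref{cond-Hom-ant-pre-Lie1} as the common hypothesis. Throughout, write $[x,y]=[x,y]_\circ$.

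\emph{Step 1: $(1)\Leftrightarrow(2)$.} Assuming \eqref{cond-Hom-ant-pre-Lie1}, I rewrite it as
\[
[y,x]\circ\alpha(z)=\alpha(x)\circ(y\circ z)-(-1)^{|x||y|}\alpha(y)\circ(x\circ z).
\]
I then substitute this into each of the three terms of \eqref{cond-Hom-ant-pre-Lie2}, using the three cyclic permutations of $(x,y,z)$ with the appropriate Koszul signs (note $[x,y]=-(-1)^{|x||y|}[y,x]$). The result is a combination of six terms of the form $\alpha(a)\circ(b\circ c)$. Grouping them in pairs $\alpha(x)\circ(y\circ z)$ and $\alpha(x)\circ(z\circ y)$ gives $\alpha(x)\circ[y,z]$, and similarly for the other two pairs. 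Hence the left side of \eqref{cond-Hom-ant-pre-Lie2} equals $\pm$ the left side of \eqref{cond-equiv1}. So, given \eqref{cond-Hom-ant-pre-Lie1}, the two conditions \eqref{cond-Hom-ant-pre-Lie2} and \eqref{cond-equiv1} are equivalent. The only delicate point is tracking the signs $(-1)^{|x||z|}$, $(-1)^{|x||y|}$ and $(-1)^{|y||z|}$ under cyclic permutation. I expect this to be the main obstacle, so I will check it carefully by testing the all-even and the all-odd cases.

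\emph{Step 2: $(1)\Rightarrow(3)$.} The representation condition for $\rho=-\mathfrak L_\circ$ on $(\mathcal A,\alpha)$ has two parts. The main part is
\[
\rho([x,y])\alpha(z)=\rho(\alpha(x))\rho(y)z-(-1)^{|x||y|}\rho(\alpha(y))\rho(x)z.
\]
Unwinding, this reads $-[x,y]\circ\alpha(z)=\alpha(x)\circ(y\circ z)-(-1)^{|x||y|}\alpha(y)\circ(x\circ z)$, which is exactly \eqref{cond-Hom-ant-pre-Lie1} once $[y,x]=-(-1)^{|x||y|}[x,y]$ is absorbed. I will use the definition of representation in the same sense as the paper, without extra multiplicativity; if the paper's definition requires $\rho(\alpha(x))\alpha=\alpha\rho(x)$, I will note that this is part of the framework.

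Hom-Lie admissibility amounts to the Hom super-Jacobi identity for $[\cdot,\cdot]$, since super-skew-symmetry is automatic. I will expand the cyclic sum $\circlearrowleft(-1)^{|x||z|}[\alpha(x),[y,z]]$. This splits into terms $\alpha(x)\circ[y,z]$, which vanish in cyclic sum by \eqref{cond-equiv1}, and terms $[y,z]\circ\alpha(x)$, which vanish in cyclic sum by \eqref{cond-Hom-ant-pre-Lie2}. Hence Jacobi holds.

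\emph{Step 3: $(3)\Rightarrow(1)$.} The representation property gives \eqref{cond-Hom-ant-pre-Lie1}, as in Step 2 read backwards. Hom-Lie admissibility gives that the sum of the two cyclic sums is zero. By the computation of Step 1, \eqref{cond-Hom-ant-pre-Lie1} forces the cyclic sum of \eqref{cond-Hom-ant-pre-Lie2} to equal a fixed sign multiple of the cyclic sum in \eqref{cond-equiv1}. I expect that sign to be $+1$ (matching the characteristic-zero argument in the non-Hom anti-pre-Lie case). In that case Jacobi gives $2\cdot(\text{cyclic sum})=0$, and characteristic $0$ then yields \eqref{cond-Hom-ant-pre-Lie2}. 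This sign determination is where I would concentrate the verification.
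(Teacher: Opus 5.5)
The paper states this proposition without proof, so your argument has to stand on its own. Its architecture is the right one: substitute \eqref{cond-Hom-ant-pre-Lie1} into the cyclic sum, and split the Jacobiator into two cyclic sums. But two of your sign assertions are false, and the sign you leave open is precisely the content of $(3)\Rightarrow(1)$.

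\textbf{The convention in \eqref{cond-Hom-ant-pre-Lie1}.} For $\rho=-\mathfrak L_\circ$ the representation identity reads
\[
\alpha(x)\circ(y\circ z)-(-1)^{|x||y|}\alpha(y)\circ(x\circ z)=-[x,y]\circ\alpha(z)=(-1)^{|x||y|}[y,x]\circ\alpha(z).
\]
As printed, \eqref{cond-Hom-ant-pre-Lie1} has right-hand side $[y,x]\circ\alpha(z)$. The two differ whenever $x,y$ are both odd, so your claim that it is ``exactly \eqref{cond-Hom-ant-pre-Lie1}'' is wrong.

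The printed sign also breaks your Step 1. Substituting it into \eqref{cond-Hom-ant-pre-Lie2}, the terms with left factor $\alpha(x)$ come out as $-(-1)^{|x|(|y|+|z|)}\alpha(x)\circ(y\circ z)+(-1)^{|y||z|}\alpha(x)\circ(z\circ y)$. This is not a multiple of $\alpha(x)\circ[y,z]$ when $|x|(|y|+|z|)$ is odd, e.g.\ $|x|=|y|=\bar 1$, $|z|=\bar 0$. Your all-even/all-odd test would miss this, although it would already show opposite signs in those two cases.

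So you must read \eqref{cond-Hom-ant-pre-Lie1} with right-hand side $-[x,y]\circ\alpha(z)$. That is what (3) requires, and it is what super-symmetry of the anti-associator $(x\circ y)\circ\alpha(z)+\alpha(x)\circ(y\circ z)$ in the Remark amounts to.

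\textbf{The undetermined sign.} With that reading, let $\Phi$ and $\Psi$ be the left-hand sides of \eqref{cond-Hom-ant-pre-Lie2} and \eqref{cond-equiv1}, and let $J=\circlearrowleft_{x,y,z}(-1)^{|x||z|}[\alpha(x),[y,z]]$.
\begin{itemize}
\item From $(-1)^{|x||z|}[\alpha(x),[y,z]]=(-1)^{|x||z|}\alpha(x)\circ[y,z]-(-1)^{|x||y|}[y,z]\circ\alpha(x)$ you get $J=\Psi-\Phi$, a difference rather than a sum.
\item Substituting $[a,b]\circ\alpha(c)=(-1)^{|a||b|}\alpha(b)\circ(a\circ c)-\alpha(a)\circ(b\circ c)$ into $\Phi$ and grouping by left factor gives $\Phi=-\Psi$. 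So the sign you expect to be $+1$ is $-1$.
\end{itemize}
Hence, modulo \eqref{cond-Hom-ant-pre-Lie1}, $J=2\Psi=-2\Phi$, and all three equivalences follow at once in characteristic $0$.

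Your two sign errors cancel, so your final ``$2\cdot(\text{cyclic sum})=0$'' is right. But the proposal does not establish it, and this is not a formality. With the opposite relative sign, $J$ would vanish identically under \eqref{cond-Hom-ant-pre-Lie1}, and $(3)\Rightarrow(1)$ would fail.

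Finally, you are right to flag $\rho(\alpha(x))\alpha=\alpha\rho(x)$. If that condition is part of the definition of a representation, $(1)\Rightarrow(3)$ needs multiplicativity of $(\mathcal A,\circ,\alpha)$ as an extra hypothesis, since it does not follow from (1).
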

\begin{rem}
Recall that a Hom-pre-Lie superalgebra is a triple
$(A,\star,\alpha)$, where $\mathcal A=\mathcal A_{\bar 0}\oplus \mathcal A_{\bar 1}$ is a
$\mathbb{Z}_2$-graded vector space, $\alpha:\mathcal A\rightarrow \mathcal A$ is an
even linear map and $\star:\mathcal A\times \mathcal A\rightarrow \mathcal A$ is an even
bilinear map satisfying
\begin{equation}\label{HPL}
(x\star y)\star \alpha(z)-\alpha(x)\star (y\star z)
=(-1)^{|x||y|}
\Big((y\star x)\star \alpha(z)-\alpha(y)\star (x\star z)\Big),
\end{equation}
for all homogeneous elements $x,y,z\in \mathcal H(\mathcal A)$.

That is, a Hom-pre-Lie superalgebra $(\mathcal A,\star,\alpha)$ is a
Hom-Lie-admissible superalgebra such that
$(\mathcal A,\mathfrak L_{\star},\alpha)$ is a representation of the sub-adjacent
Hom-Lie superalgebra
$(\mathfrak g(\mathcal A),[\cdot,\cdot],\alpha)$, where
\[
[x,y]=x\star y-(-1)^{|x||y|}y\star x,
\qquad \forall x,y\in \mathcal H(\mathcal A).
\]

Therefore the notion of Hom-anti-pre-Lie superalgebras is justified,
since a Hom-anti-pre-Lie superalgebra $(\mathcal A,\circ,\alpha)$ is a
Hom-Lie-admissible superalgebra such that
$(\mathcal A,-\mathfrak L_{\circ},\alpha)$ is a representation of the sub-adjacent
Hom-Lie superalgebra
$(\mathfrak g(A),[\cdot,\cdot],\alpha)$.

Note that for a Hom-pre-Lie superalgebra $(\mathcal A,\star,\alpha)$,
the Hom-pre-Lie super-identity \eqref{HPL} is sufficient to ensure
that the supercommutator bracket defines a Hom-Lie superalgebra
structure on $\mathfrak g(\mathcal A)$. However, for a Hom-anti-pre-Lie
superalgebra $(\mathcal A,\circ,\alpha)$, the representation condition
corresponding to
\[
\alpha(x)\circ (y\circ z)
-(-1)^{|x||y|}\alpha(y)\circ (x\circ z)
=[y,x]\circ \alpha(z)
\]
alone is not sufficient to guarantee the Hom-super Jacobi identity.
Hence an additional compatibility condition, namely the Hom-super
analogue of Eq.~\eqref{cond-Hom-ant-pre-Lie2} (or equivalently Eq.~\eqref{cond-equiv1}), is required.    
\end{rem}
\begin{df}(\cite{wi-oth})\label{def-ant-O-op-H-Liesup}
Let $(V,\rho,\beta)$ be a representation of a Hom-Lie superalgebra $(\mathcal A,[\cdot,\cdot],\alpha)$. An even linear map $T:V\to\mathcal A$ is called an \textbf{anti-super-$\mathcal{O}$-operator} on $(\mathcal A,[\cdot,\cdot],\alpha)$ associated to $(V,\rho,\beta)$ if it satisfies
\begin{eqnarray}
T\circ\beta&=&\alpha\circ T,\label{cond-ant-O-oper-Hom-Lie1} \\
\lbrack T(u),T(v)\rbrack&=&T\big((-1)^{|u||v|}\rho(T(v))u-\rho(T(u))v\big),\;\forall u,v\in\mathcal H(V).\label{cond-ant-O-oper-Hom-Lie2}
\end{eqnarray}
an anti-super-$\mathcal O$-operator $T$ is called \textbf{strong} if it satisfies:
\begin{equation}\label{cond-strong-O-Oper}
\rho([T(u),T(v)])\beta(w)+(-1)^{|u|(|v|+|w|)}\rho([T(v),T(w)])\beta(u)+(-1)^{|w|(|u|+|v|)}\rho([T(w),T(u)])\beta(v)=0,  
\end{equation}
for all $ u,v,w\in\mathcal H(V)$.\\
In particular, an anti-super-$\mathcal O$-operator $\mathcal R$ of $(\mathcal A,[\cdot,\cdot],\alpha)$ associated to the adjoint representation $(\mathcal A,ad,\alpha)$ is called an \textbf{anti-Rota-Baxter operator (of weight zero)}, that is, $\mathcal R:\mathcal A\to\mathcal A$ is an even linear map commuting with $\alpha$ satisfying:
\begin{equation}\label{cond-Rota-Baxter-Oper}
 [\mathcal R(y),\mathcal R(x)]= (-1)^{|x||y|}\mathcal R([\mathcal R(x),y]+[x,\mathcal R(y)]),\;\forall x,y\in\mathcal A.  
\end{equation}
An anti-Rota-Baxter operator $\mathcal R$ is called \textbf{strong}, if it satisfies
\begin{equation}\label{cond-strong-Rota-Baxter-Oper}
[[\mathcal R(x),\mathcal R(y)],\alpha(z)]+(-1)^{|x|(|y|+|z|)} [[\mathcal R(y),\mathcal R(z)],\alpha(x)]+(-1)^{|z|(|x|+|y|)}[[\mathcal R(z),\mathcal R(x)],\alpha(y)]=0,   
\end{equation}
for all $x,y,z\in\mathcal H(\mathcal A)$.
\end{df}
\begin{df}(\cite{wi-oth})\label{def-ant-sup-Der-H-Liesup}
Let $(V,\rho,\beta)$ be a representation of a Hom-Lie superalgebra $(\mathcal A,[\cdot,\cdot],\alpha)$. A linear map $\mathfrak D:\mathcal A\to V$ is called a \textbf{super anti-derivation} ( or a \textbf{super anti-$1$-cocycle}) if $\mathfrak D$ satisfies
\begin{eqnarray}
\mathfrak D\circ\alpha&=&\beta\circ\mathfrak D,\label{anti-der-Hom-Lie1}\\
\mathfrak D([x,y])&=&(-1)^{|y|(|\mathfrak D|+|x|)}\rho(y)(\mathfrak D(x))-\rho(x)(\mathfrak D(y)),\;\forall x,y\in\mathcal A.\label{anti-der-Hom-Lie2}    
\end{eqnarray}
\end{df}
\begin{prop}(\cite{wi-oth})\label{inver-ant-O-op-iff-anti-der}
Let $(V,\rho,\beta)$ be a representation of a Hom-Lie superalgebra $(\mathcal A,[\cdot,\cdot],\alpha)$ and $T:V\to\mathcal A$ an even invertible linear map. Then $T$ is an anti-super-$\mathcal O$-operator on $\mathcal A$ associated to $(V,\rho,\beta)$ if and only if $T^{-1}$ is an even anti-derivation on $\mathcal A$.
\end{prop}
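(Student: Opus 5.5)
The plan is to verify the two conditions of Definition~\ref{def-ant-sup-Der-H-Liesup} for $\mathfrak D=T^{-1}$, reading each as a translation of the matching condition in Definition~\ref{def-ant-O-op-H-Liesup}. Since $T$ is even and invertible, $T^{-1}:\mathcal A\to V$ is even, so $|\mathfrak D|=0$. Each direction will then be obtained by substituting $u=T^{-1}(x)$, $v=T^{-1}(y)$ (equivalently $x=T(u)$, $y=T(v)$) and applying $T$ or $T^{-1}$ to both sides. Because $T$ is bijective, homogeneous $u,v\in\mathcal H(V)$ correspond exactly to homogeneous $x,y\in\mathcal H(\mathcal A)$, with $|u|=|x|$ and $|v|=|y|$.

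\emph{Step 1 (compatibility with the twists).} From $T\circ\beta=\alpha\circ T$, composing with $T^{-1}$ on the left and on the right gives $\beta\circ T^{-1}=T^{-1}\circ\alpha$, which is \eqref{anti-der-Hom-Lie1}. The argument reverses directly.

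\emph{Step 2 (the main identity).} Assume \eqref{cond-ant-O-oper-Hom-Lie2} and take homogeneous $x,y\in\mathcal A$. Put $u=T^{-1}(x)$ and $v=T^{-1}(y)$. Then
\[
[x,y]=T\big((-1)^{|x||y|}\rho(y)T^{-1}(x)-\rho(x)T^{-1}(y)\big).
\]
Applying $T^{-1}$ to both sides gives
\[
T^{-1}([x,y])=(-1)^{|x||y|}\rho(y)T^{-1}(x)-\rho(x)T^{-1}(y).
\]
This is exactly \eqref{anti-der-Hom-Lie2}, since with $|\mathfrak D|=0$ the sign $(-1)^{|y|(|\mathfrak D|+|x|)}$ reduces to $(-1)^{|x||y|}$. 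For the converse, start from \eqref{anti-der-Hom-Lie2} with $x=T(u)$ and $y=T(v)$, then apply $T$ to both sides to recover \eqref{cond-ant-O-oper-Hom-Lie2}.

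The only point needing care is the bookkeeping of degrees. One must check that the sign convention in \eqref{anti-der-Hom-Lie2}, evaluated at an even map, matches the sign $(-1)^{|u||v|}$ in \eqref{cond-ant-O-oper-Hom-Lie2}. I expect no real obstacle beyond this, since both directions are term-by-term translations through the bijection $T$.
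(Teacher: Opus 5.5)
Your argument is correct: since $T^{-1}$ is even, the sign in \eqref{anti-der-Hom-Lie2} reduces to $(-1)^{|x||y|}$, and the substitution $x=T(u)$, $y=T(v)$ turns \eqref{anti-der-Hom-Lie1}--\eqref{anti-der-Hom-Lie2} for $T^{-1}$ into \eqref{cond-ant-O-oper-Hom-Lie1}--\eqref{cond-ant-O-oper-Hom-Lie2} term by term. The paper does not prove this statement itself; it quotes the result from an earlier reference. Your direct translation through the bijection $T$ is the standard argument, and it rests on the same identity $T^{-1}([T(u),T(v)])=(-1)^{|u||v|}\rho(T(v))u-\rho(T(u))v$ that the paper uses in its proof of Proposition~\ref{Prop-inv-O-oper-autom-strong}.
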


\begin{thm}(\cite{wi-oth})\label{Hom-ant-pre-Lie-by-ant-O-oper}
 Let $T$ be an anti-super-$\mathcal O$-operator on a Hom-Lie superalgebra $(\mathcal A,[\cdot,\cdot],\alpha)$ with respect to a representation $(V,\rho,\beta)$. Then, $(V,\circ_V,\beta)$ where, $\circ_V:V\times V\to V$ is defined by
 \begin{equation}\label{ant-O-oper-to-anti-pre-Lie}
 u\circ_V v=-\rho(T(u))v,\;\forall u,v\in V,    
 \end{equation}
 satisfies Eq. \eqref{cond-Hom-ant-pre-Lie1}. Moreover, $(V,\circ_V,\beta)$ is a Hom-Lie-admissible such that $(V,\circ_V,\beta)$ is a Hom-anti-pre-Lie superalgebra if and only if $T$ is strong. So, $T$ is a homomorphism of Hom-Lie superalgebras from the sub-adjacent Hom-Lie superalgebra $(\mathcal A(V),[\cdot,\cdot]_{\circ_V},\beta)$ to $(\mathcal A,[\cdot,\cdot],\alpha)$. Therefore, there is an induced Hom-anti-pre-Lie
superalgebra structure on $T(V)=\{T(u);\;u\in V\}\subset\mathcal A$ given by
\begin{equation}\label{ind-Hom-anti-T(V)}
T(u)\circ_\mathcal AT(v)=T(u\circ_Vv),\forall u,v\in V,
\end{equation}
and $T$ is a homomorphism of Hom-anti-pre-Lie superalgebras.
\end{thm}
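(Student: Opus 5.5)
The plan is to verify the identities of Definition~\ref{def-ant-O-op-H-Liesup} and Eq.~\eqref{cond-Hom-ant-pre-Lie1} directly on homogeneous elements, using three ingredients: the representation axioms of $(V,\rho,\beta)$, the compatibility $T\circ\beta=\alpha\circ T$, and the operator identity \eqref{cond-ant-O-oper-Hom-Lie2}. For a representation we use $\rho(\alpha(x))\beta(u)=\beta(\rho(x)u)$ and
\[
\rho([x,y])\beta(u)=\rho(\alpha(x))\rho(y)u-(-1)^{|x||y|}\rho(\alpha(y))\rho(x)u .
\]

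The first step is the commutator of $\circ_V$. By definition,
\[
[u,v]_{\circ_V}=-\rho(T(u))v+(-1)^{|u||v|}\rho(T(v))u .
\]
Comparing with \eqref{cond-ant-O-oper-Hom-Lie2} gives the key identity
\[
T([u,v]_{\circ_V})=[T(u),T(v)].
\]
This is what makes $T$ a bracket-preserving map.

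The second step verifies \eqref{cond-Hom-ant-pre-Lie1} with $\beta$ in place of $\alpha$. The left-hand side is
\[
\rho(T\beta(u))\rho(T(v))w-(-1)^{|u||v|}\rho(T\beta(v))\rho(T(u))w .
\]
Since $T\beta=\alpha T$, the representation axiom turns this into $\rho([T(u),T(v)])\beta(w)$. The right-hand side is
\[
[v,u]_{\circ_V}\circ_V\beta(w)=-\rho(T([v,u]_{\circ_V}))\beta(w)=-\rho([T(v),T(u)])\beta(w).
\]
By super-skew-symmetry this equals $\rho([T(u),T(v)])\beta(w)$ (the sign $(-1)^{|u||v|}$ is absorbed because the bracket is even). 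So the two sides agree.

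The third step handles \eqref{cond-Hom-ant-pre-Lie2}. Each term has the form
\[
[u,v]_{\circ_V}\circ_V\beta(w)=-\rho([T(u),T(v)])\beta(w).
\]
Hence the cyclic sum in \eqref{cond-Hom-ant-pre-Lie2} is exactly minus the left-hand side of \eqref{cond-strong-O-Oper}, once the Koszul signs are matched: the sign $(-1)^{|u||w|}$ versus $(-1)^{|u|(|v|+|w|)}$ differs by the common factor $(-1)^{|u||v|+\dots}$ after multiplying through. So $V$ is Hom-anti-pre-Lie if and only if $T$ is strong.

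By Proposition~\ref{equiv-H-ant--pre-Lie-Hom-Lie admiss}, a Hom-anti-pre-Lie superalgebra is Hom-Lie-admissible. Alternatively, one can check the Hom-Jacobi identity of $[\cdot,\cdot]_{\circ_V}$ directly: \eqref{cond-Hom-ant-pre-Lie1} plus the cyclic condition gives Jacobi via the equivalence in that proposition.

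The homomorphism statement then follows from the identity $T([u,v]_{\circ_V})=[T(u),T(v)]$ together with $T\beta=\alpha T$.

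For the induced structure on $T(V)$, define $T(u)\circ_{\mathcal A}T(v)=T(u\circ_V v)$. The main obstacle is well-definedness when $T$ is not injective. I would show that $\ker T$ is an ideal for $\circ_V$:
\begin{itemize}
\item If $T(u)=0$, then $u\circ_V v=-\rho(0)v=0$.
\item If $T(v)=0$, then \eqref{cond-ant-O-oper-Hom-Lie2} gives $T(\rho(T(u))v)=(-1)^{|u||v|}T(\rho(T(v))u)-[T(u),T(v)]=0$, so $T(u\circ_V v)=0$.
\end{itemize}
Hence the product depends only on $T(u)$ and $T(v)$. The Hom-anti-pre-Lie identities on $T(V)$ with twist $\alpha|_{T(V)}$ are then transported from $V$ by applying $T$ and using $T\beta=\alpha T$; the latter also shows that $\alpha$ preserves $T(V)$. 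Finally, $T$ is a homomorphism of Hom-anti-pre-Lie superalgebras by construction.

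The delicate parts I expect are the sign bookkeeping in the strong-condition comparison and this kernel-ideal argument.
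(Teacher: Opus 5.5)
\textbf{Step~2 has a sign error.} The paper quotes this theorem without proof, and your direct verification is the natural route. But Step~2 relies on a sign manipulation that is false. Super-skew-symmetry gives
\[
-\rho([T(v),T(u)])\beta(w)=(-1)^{|u||v|}\rho([T(u),T(v)])\beta(w),
\]
and the evenness of the bracket does nothing to remove $(-1)^{|u||v|}$. Take $u,v$ both odd. Then $T(u),T(v)$ are odd, so $[T(v),T(u)]=[T(u),T(v)]$. Hence the right-hand side of \eqref{cond-Hom-ant-pre-Lie1}, as printed, equals $-\rho([T(u),T(v)])\beta(w)$, while you correctly found the left-hand side to be $+\rho([T(u),T(v)])\beta(w)$.

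This is a genuine discrepancy. Take the Grassmann algebra $\Lambda(\xi,\eta)$ with the even derivation $D(\xi)=\xi$, $D(\eta)=0$, and set $x\circ y=xD(y)+2D(x)y$. Its commutator is $[x,y]_\circ=D(x)y-xD(y)$. One checks that $-\mathfrak L_\circ$ is a representation of this Lie superalgebra, so $T=\mathrm{id}$ (with $\alpha=\beta=\mathrm{id}$) is an anti-super-$\mathcal O$-operator. For $u=\xi$, $v=\eta$, $w=1$, the left side of \eqref{cond-Hom-ant-pre-Lie1} is $-2\xi\eta$, but the printed right side is $2\xi\eta$.

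What your computation actually proves is
\[
\beta(u)\circ_V(v\circ_V w)-(-1)^{|u||v|}\beta(v)\circ_V(u\circ_V w)=-[u,v]_{\circ_V}\circ_V\beta(w)=(-1)^{|u||v|}[v,u]_{\circ_V}\circ_V\beta(w).
\]
This is the correct super identity. It is exactly the statement that $-\mathfrak L_\circ$ is a representation in Proposition~\ref{equiv-H-ant--pre-Lie-Hom-Lie admiss}(3). It is also the form the paper itself uses in the proofs of Propositions~\ref{H-admissible is H-anti-pre-Lie} and~\ref{pro:anti-pre-Lie algebras from Novikov algebras}. So \eqref{cond-Hom-ant-pre-Lie1} is misprinted: a factor $(-1)^{|x||y|}$ is missing on its right-hand side. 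You should have flagged this and proved the corrected identity, rather than forcing agreement with the misprint. The correction matters later too. Deriving the Hom super-Jacobi identity from \eqref{cond-Hom-ant-pre-Lie1} and \eqref{cond-Hom-ant-pre-Lie2}, which you borrow from Proposition~\ref{equiv-H-ant--pre-Lie-Hom-Lie admiss}, only works with the signed version.

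\textbf{Remaining points.} With that fix the argument goes through, apart from two smaller points. First, in Step~3 the cyclic sum in \eqref{cond-Hom-ant-pre-Lie2} equals $-(-1)^{|u||w|}$ times the left side of \eqref{cond-strong-O-Oper}, so the common factor is $(-1)^{|u||w|}$. Second, and more important, you only prove ``strong $\Rightarrow$ Hom-Lie-admissible''. The paper later uses the converse, in the proof of Proposition~\ref{Prop-inv-O-oper-autom-strong}. That direction follows by expanding the Jacobiator using Step~1, $T\beta=\alpha T$ and the representation axiom. The sum
\[
(-1)^{|u||w|}[\beta(u),[v,w]_{\circ_V}]_{\circ_V}+(-1)^{|u||v|}[\beta(v),[w,u]_{\circ_V}]_{\circ_V}+(-1)^{|v||w|}[\beta(w),[u,v]_{\circ_V}]_{\circ_V}
\]
equals $2(-1)^{|u||w|}$ times the left side of \eqref{cond-strong-O-Oper}. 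Hence in characteristic $0$, Hom-Lie-admissibility, the Hom-anti-pre-Lie property and strongness are all equivalent.

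Your argument that $\ker T$ is an ideal for $\circ_V$ is correct. It makes the product on $T(V)$ well defined, a point the statement passes over.
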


\begin{cor}(\cite{wi-oth})
Let $\mathcal R:\mathcal A\to\mathcal A$ be a strong anti-Rota-Baxter operator of a Hom-Lie superalgebra $(\mathcal A,[\cdot,\cdot],\alpha)$. Then, the product 
\begin{equation}\label{ant-R-B-to-anti-pre-Lie}
x\circ y=-[\mathcal R(x),y],\;\forall x,y\in\mathcal A,    
\end{equation}
defines on $\mathcal A$ a Hom-anti-pre-Lie superalgebra structure. Conversely, if $\mathcal R:\mathcal A\to\mathcal A$ is a linear transformation of a Hom-Lie superalgebra $(\mathcal A,[\cdot,\cdot],\alpha)$ such that, the product defined by Eq. \eqref{ant-R-B-to-anti-pre-Lie} defines on $\mathcal A$ a Hom-anti-pre-Lie superalgebra, then $\mathcal R$ satisfies Eq. \eqref{cond-strong-Rota-Baxter-Oper} and the following equation:
\begin{equation}\label{Cor-R-B-Hom-anti-pre-Lie}
[[\mathcal R(x),\mathcal R(y)]+\mathcal R([\mathcal R(x),y]+[x,\mathcal R(y)]),\alpha(z)]=0,\forall x,y,z\in\mathcal A.
\end{equation}
\end{cor}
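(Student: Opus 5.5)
The plan is to obtain the direct part as the special case of Theorem~\ref{Hom-ant-pre-Lie-by-ant-O-oper} with $V=\mathcal A$, $\rho=ad$, $\beta=\alpha$ and $T=\mathcal R$. The converse will be a direct computation in the Hom-Lie superalgebra.

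For the direct part, I first check that an anti-Rota-Baxter operator is exactly an anti-super-$\mathcal O$-operator associated to $(\mathcal A,ad,\alpha)$.
\begin{itemize}
\item Eq. \eqref{cond-ant-O-oper-Hom-Lie1} is the assumption $\mathcal R\alpha=\alpha\mathcal R$.
\item With $\rho=ad$, Eq. \eqref{cond-ant-O-oper-Hom-Lie2} reads $[\mathcal R(x),\mathcal R(y)]=\mathcal R\big((-1)^{|x||y|}[\mathcal R(y),x]-[\mathcal R(x),y]\big)$.
\item By super-skew-symmetry \eqref{H-skwesym} and evenness of $\mathcal R$, this equals $-\mathcal R([\mathcal R(x),y]+[x,\mathcal R(y)])$.
\item Multiplying Eq. \eqref{cond-Rota-Baxter-Oper} by $-(-1)^{|x||y|}$ and using skew-symmetry on its left side gives the same identity.
\end{itemize}
Next, $\rho([\mathcal R(x),\mathcal R(y)])\alpha(z)=[[\mathcal R(x),\mathcal R(y)],\alpha(z)]$, so the strong condition \eqref{cond-strong-O-Oper} becomes \eqref{cond-strong-Rota-Baxter-Oper}. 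Finally, $u\circ_V v=-\rho(T(u))v=-[\mathcal R(u),v]$ is exactly the product \eqref{ant-R-B-to-anti-pre-Lie}. Theorem~\ref{Hom-ant-pre-Lie-by-ant-O-oper} then gives that $(\mathcal A,\circ,\alpha)$ is a Hom-anti-pre-Lie superalgebra.

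For the converse, I assume $x\circ y=-[\mathcal R(x),y]$ makes $(\mathcal A,\circ,\alpha)$ Hom-anti-pre-Lie. I will also use that $\mathcal R$ commutes with $\alpha$, as is built into the notion of anti-Rota-Baxter operator; I expect the argument needs this. The steps are:
\begin{itemize}
\item Compute the commutator: $[y,x]_\circ=(-1)^{|x||y|}\big([\mathcal R(x),y]+[x,\mathcal R(y)]\big)$.
\item Hence the right side of \eqref{cond-Hom-ant-pre-Lie1} is $-(-1)^{|x||y|}[\mathcal R([\mathcal R(x),y]+[x,\mathcal R(y)]),\alpha(z)]$.
\item The left side of \eqref{cond-Hom-ant-pre-Lie1} is $[\mathcal R\alpha(x),[\mathcal R(y),z]]-(-1)^{|x||y|}[\mathcal R\alpha(y),[\mathcal R(x),z]]$.
\item Rewrite $\mathcal R\alpha=\alpha\mathcal R$ and apply the Hom super-Jacobi identity \eqref{H-sJ} to the elements $\mathcal R(x),\mathcal R(y),z$. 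This collapses the left side to $[[\mathcal R(x),\mathcal R(y)],\alpha(z)]$.
\end{itemize}
Comparing the two sides yields an identity of the form
\[
[[\mathcal R(x),\mathcal R(y)],\alpha(z)]=\pm[\mathcal R([\mathcal R(x),y]+[x,\mathcal R(y)]),\alpha(z)]. \qquad (\ast)
\]

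The remaining steps are as follows.
\begin{itemize}
\item Substitute the commutator formula into \eqref{cond-Hom-ant-pre-Lie2}. This gives the vanishing of the graded cyclic sum of $[\mathcal R([\mathcal R(x),y]+[x,\mathcal R(y)]),\alpha(z)]$.
\item Replace each term using $(\ast)$. The cyclic sum becomes $\pm$ the left side of \eqref{cond-strong-Rota-Baxter-Oper}, so $\mathcal R$ is strong.
\item Feeding \eqref{cond-strong-Rota-Baxter-Oper} back into $(\ast)$, or reading $(\ast)$ with the correct sign, should give Eq. \eqref{Cor-R-B-Hom-anti-pre-Lie}.
\end{itemize}

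The main obstacle is sign bookkeeping in the super setting. First, I must confirm that the sign in $(\ast)$ is the one making $[\mathcal R(x),\mathcal R(y)]+\mathcal R(\cdots)$ appear in the bracket with $\alpha(z)$, rather than a difference. Second, the Koszul signs $(-1)^{|x||z|}$ etc. in \eqref{cond-Hom-ant-pre-Lie2} must match the signs $(-1)^{|x|(|y|+|z|)}$ in \eqref{cond-strong-Rota-Baxter-Oper} after the substitution. I would check both on homogeneous elements of every parity pattern. If the Jacobi rewriting produces the opposite sign, I would instead rearrange \eqref{H-sJ} with the roles of $\mathcal R(x)$ and $\mathcal R(y)$ swapped.
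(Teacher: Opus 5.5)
Your overall route is the natural one. The paper states this corollary without proof, and the direct part is the case $V=\mathcal A$, $\rho=ad$, $\beta=\alpha$, $T=\mathcal R$ of Theorem~\ref{Hom-ant-pre-Lie-by-ant-O-oper}. Your checks are correct: \eqref{cond-Rota-Baxter-Oper} is \eqref{cond-ant-O-oper-Hom-Lie2} for $ad$, i.e.\ $[\mathcal R(x),\mathcal R(y)]=-\mathcal R(S(x,y))$ with $S(x,y)=[\mathcal R(x),y]+[x,\mathcal R(y)]$, and \eqref{cond-strong-O-Oper} becomes \eqref{cond-strong-Rota-Baxter-Oper}.

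You are also right that the converse needs $\mathcal R\alpha=\alpha\mathcal R$. But in the converse $\mathcal R$ is only a linear map, so this is an added hypothesis, not something built in, and it cannot be dropped. Take $\mathcal A=\mathrm{span}\{e_1,e_2\}$ purely even, with $[e_1,e_2]=e_2$, $\alpha(e_1)=e_1$, $\alpha(e_2)=0$, $\mathcal R(e_1)=e_2$, $\mathcal R(e_2)=e_1$. Any skew bracket on a $2$-dimensional space satisfies \eqref{H-sJ} for every $\alpha$. The product \eqref{ant-R-B-to-anti-pre-Lie} is $e_1\circ e_1=e_2$, $e_2\circ e_2=-e_2$, all others zero. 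It is commutative, and $\alpha(x)\circ(y\circ z)\in e_1\circ\mathrm{span}\{e_2\}=0$, so both axioms hold. Yet the left side of \eqref{Cor-R-B-Hom-anti-pre-Lie} at $(e_1,e_2,e_1)$ is $[-e_2,e_1]=e_2\neq0$.

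\textbf{The gap: the sign you left open, which your proposed fixes would not close.} With \eqref{cond-Hom-ant-pre-Lie1} as displayed, your own (correct) computation gives $[[\mathcal R(x),\mathcal R(y)],\alpha(z)]=-(-1)^{|x||y|}[\mathcal R(S(x,y)),\alpha(z)]$.
\begin{enumerate}
\item For $x,y$ both odd this is a difference, not \eqref{Cor-R-B-Hom-anti-pre-Lie}.
\item Substituting it into \eqref{cond-Hom-ant-pre-Lie2} gives a cyclic sum with coefficients $(-1)^{|z|(|x|+|y|)}$ and $(-1)^{|y|(|x|+|z|)}$. This is a different identity from \eqref{cond-strong-Rota-Baxter-Oper}, e.g.\ for $x,y$ odd and $z$ even.
\end{enumerate}
Swapping the roles of $\mathcal R(x)$ and $\mathcal R(y)$ in \eqref{H-sJ} changes nothing. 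The collapse of the left side to $[[\mathcal R(x),\mathcal R(y)],\alpha(z)]$ is exact, and both sides are graded antisymmetric in $x,y$. Nor can the cyclic identity \eqref{cond-strong-Rota-Baxter-Oper} repair a sign in a non-cyclic one.

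The sign error is not in your Jacobi step but in the displayed axiom, which lacks a Koszul factor. The form that matches Proposition~\ref{equiv-H-ant--pre-Lie-Hom-Lie admiss}(3) ($-\mathfrak L_\circ$ is a representation), and the way the axiom is actually used in the proofs of Section~3, is
\[
\alpha(x)\circ(y\circ z)-(-1)^{|x||y|}\alpha(y)\circ(x\circ z)=-[x,y]_\circ\circ\alpha(z)=(-1)^{|x||y|}[y,x]_\circ\circ\alpha(z).
\]
Since $[x,y]_\circ=-S(x,y)$, the right side is $-[\mathcal R(S(x,y)),\alpha(z)]$. So, given $\mathcal R\alpha=\alpha\mathcal R$, this axiom is \emph{equivalent} to \eqref{Cor-R-B-Hom-anti-pre-Lie} itself, for all parities.

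Next, $[x,y]_\circ\circ\alpha(z)=[\mathcal R(S(x,y)),\alpha(z)]=-[[\mathcal R(x),\mathcal R(y)],\alpha(z)]$. Multiplying \eqref{cond-Hom-ant-pre-Lie2} by $-(-1)^{|x||z|}$ then gives exactly \eqref{cond-strong-Rota-Baxter-Oper}. The order is therefore \eqref{Cor-R-B-Hom-anti-pre-Lie} first, strongness second; there is no need to feed strongness back in.

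Run backwards, these two equivalences also prove the direct part, since an anti-Rota-Baxter operator trivially satisfies \eqref{Cor-R-B-Hom-anti-pre-Lie}. This avoids Theorem~\ref{Hom-ant-pre-Lie-by-ant-O-oper}, and with it the question of whether $(\mathcal A,ad,\alpha)$ is a representation, which in the Hom setting normally requires $\alpha$ to be multiplicative.
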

\begin{prop}\label{Prop-inv-O-oper-autom-strong}
An invertible anti-super-$\mathcal O$-operator on a Hom-Lie superalgebra is autmatically strong.    
\end{prop}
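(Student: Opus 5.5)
The plan is to turn the Hom super-Jacobi identity of $(\mathcal A,[\cdot,\cdot],\alpha)$ into the strong condition \eqref{cond-strong-O-Oper} by pulling it back along $T^{-1}$. Since $T$ is invertible, every homogeneous element of $\mathcal A$ is of the form $T(u)$ with $|T(u)|=|u|$. Write $x=T(u)$, $y=T(v)$, $z=T(w)$. Condition \eqref{cond-ant-O-oper-Hom-Lie1} gives $T^{-1}\circ\alpha=\beta\circ T^{-1}$. Condition \eqref{cond-ant-O-oper-Hom-Lie2} can be rewritten as the anti-derivation identity
\[
T^{-1}[x,y]=(-1)^{|x||y|}\rho(y)T^{-1}(x)-\rho(x)T^{-1}(y),
\]
which is exactly Proposition \ref{inver-ant-O-op-iff-anti-der}.

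First, I apply $T^{-1}$ to the cyclic (super-signed) form of the Hom super-Jacobi identity
\[
\circlearrowleft_{x,y,z}(-1)^{|x||z|}[[x,y],\alpha(z)]=0,
\]
which follows from \eqref{H-skwesym} and \eqref{H-sJ}. Each term is expanded with the anti-derivation identity, taking $[x,y]$ and $\alpha(z)$ as the two arguments:
\[
T^{-1}[[x,y],\alpha(z)]=\pm\rho(\alpha(z))\,T^{-1}[x,y]-\rho([x,y])\,\beta(w).
\]
Expanding $T^{-1}[x,y]$ once more gives double terms of the form $\pm\rho(\alpha(z))\rho(y)u$ and $\pm\rho(\alpha(z))\rho(x)v$.

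Second, I regroup the six double terms in the cyclic sum according to the vector of $V$ on which they act. For $u$ these are $\rho(\alpha(z))\rho(y)u$ and $\rho(\alpha(y))\rho(z)u$, and similarly for $v$ and $w$. The representation axiom
\[
\rho([a,b])\beta(m)=\rho(\alpha(a))\rho(b)m-(-1)^{|a||b|}\rho(\alpha(b))\rho(a)m
\]
turns each pair into a single term $\pm\rho([y,z])\beta(u)$. If the signs work out, the whole cyclic sum collapses to
\[
0=-2\Big(\rho([T(u),T(v)])\beta(w)+(-1)^{|u|(|v|+|w|)}\rho([T(v),T(w)])\beta(u)+(-1)^{|w|(|u|+|v|)}\rho([T(w),T(u)])\beta(v)\Big),
\]
up to an overall sign. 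Since $\operatorname{char}\mathbb K=0$, this yields \eqref{cond-strong-O-Oper}, so $T$ is strong.

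The main obstacle is the bookkeeping of the Koszul signs. I must check that the pairs of double terms really combine with the correct relative sign, so that they reproduce the super-skew-symmetrized brackets appearing in \eqref{cond-strong-O-Oper}, with the same cyclic sign factors $(-1)^{|u|(|v|+|w|)}$ and so on, and do not cancel. I expect to verify this first in the purely even case, where the coefficient is visibly $-2$. Then I would carry the signs through using that $T$, $\alpha$, $\beta$ are even and $\rho(a)$ has parity $|a|$. Multiplying the $T^{-1}$-image of the Jacobi identity by suitable $(-1)^{|u||w|}$-type factors should normalize it to the form of \eqref{cond-strong-O-Oper}.
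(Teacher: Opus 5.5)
Your argument is correct, and the Koszul signs work out as you hope. Use the cyclic weight $(-1)^{|x||z|}$. The pair of double terms acting on $u$ (resp.\ $v$, $w$) is turned by the representation axiom, with $(a,b)=(y,z)$ (resp.\ $(z,x)$, $(x,y)$), into $-(-1)^{|x||y|}\rho([y,z])\beta(u)$ (resp.\ $-(-1)^{|y||z|}\rho([z,x])\beta(v)$, $-(-1)^{|x||z|}\rho([x,y])\beta(w)$). Each of these coincides, sign included, with the single term acting on the same vector, so nothing cancels and
\begin{align*}
&T^{-1}\Big(\circlearrowleft_{x,y,z}(-1)^{|x||z|}[[x,y],\alpha(z)]\Big)\\
&\quad=-2(-1)^{|u||w|}\Big(\rho([T(u),T(v)])\beta(w)+(-1)^{|u|(|v|+|w|)}\rho([T(v),T(w)])\beta(u)+(-1)^{|w|(|u|+|v|)}\rho([T(w),T(u)])\beta(v)\Big).
\end{align*}
Dividing by $-2(-1)^{|u||w|}$ gives \eqref{cond-strong-O-Oper}.

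The paper takes a more structural route. It puts $u\circ v=-\rho(T(u))v$ on $V$ and observes that the supercommutator of $\circ$ is $T^{-1}([T(u),T(v)])$. Thus $T$ transports the Hom super-Jacobi identity from $\mathcal A$ to $V$, so $(V,\circ,\beta)$ is Hom-Lie-admissible. It then invokes Theorem~\ref{Hom-ant-pre-Lie-by-ant-O-oper}, which says Hom-anti-pre-Lie is equivalent to strong.

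Your computation is exactly what that theorem hides. Once the anti-pre-Lie identity holds, the Jacobiator of the commutator equals $2\circlearrowleft_{u,v,w}(-1)^{|u||w|}[u,v]\circ\beta(w)$, and $[u,v]\circ\beta(w)=-\rho([T(u),T(v)])\beta(w)$.

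The paper's route is shorter and conceptual, but it depends on the cited theorem and on the sign convention in \eqref{cond-Hom-ant-pre-Lie1}. In fact $(V,\circ,\beta)$ satisfies
\[
\beta(u)\circ(v\circ w)-(-1)^{|u||v|}\beta(v)\circ(u\circ w)=\rho([T(u),T(v)])\beta(w)=(-1)^{|u||v|}[v,u]\circ\beta(w),
\]
so the printed right-hand side of \eqref{cond-Hom-ant-pre-Lie1} needs a factor $(-1)^{|x||y|}$.

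Your route uses only the Hom-Lie superalgebra axioms, the representation axiom, $T\beta=\alpha T$ and the operator identity. So it is immune to that issue, and it shows precisely where $\operatorname{char}\mathbb K\neq 2$ enters. Also, if you replace $T^{-1}$ by the identity $[T(u),T(v)]=T\big((-1)^{|u||v|}\rho(T(v))u-\rho(T(u))v\big)$, the same computation shows that $T$ kills the strong expression for every anti-super-$\mathcal O$-operator. Hence injectivity of $T$ already suffices.
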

\begin{proof}
  Let $T:\mathcal A\to V$ be an anti-super-$\mathcal O$-operator on a Hom-Lie superalgebra $(\mathcal A,[\cdot,\cdot]_\mathcal A,\alpha)$ associated to a representation $(V,\rho,\beta)$. Define a bilinear operation $\circ :V\times V\to V$ by Eq. \eqref{ant-O-oper-to-anti-pre-Lie}. Then we have:
  $$[u,v]_V=u\circ v-(-1)^{|u||v|}v\circ u=(-1)^{|u||v|}\rho(T(v))u-\rho(T(u))v=T^{-1}([T(u),T(v)]_V),\;\forall u,v\in\mathcal H(V).$$
  Therefore for all $u,v,w\in\mathcal H(V)$, we have:
  $$[[u,v]_V,\beta(w)]_V=[T^{-1}([T(u),T(v)]_\mathcal A),\beta(w)]_V=T^{-1}([[T(u),T(v)]_\mathcal A,\alpha(T(w))]_\mathcal A).$$
  Thus, $(V,[\cdot,\cdot],\beta)$ is a Hom-Lie superalgebra, and hence $(V,\circ,\beta)$ is a Hom-Lie-admissible superalgebra. Then by Theorem \ref{Hom-ant-pre-Lie-by-ant-O-oper}, $T$ is strong.
\end{proof}
\begin{cor}\label{cor-comp-h-anti-pre-Lie-inv-O-op}
 Let $(\mathcal A, [\cdot,\cdot],\alpha)$ be a Hom-Lie superalgebra. Then there is a compatible Hom-anti-pre-Lie superalgebra
structure on $\mathcal A$ if and only if there exists an invertible anti-super-$\mathcal O$-operator on $(\mathcal A, [\cdot,\cdot],\alpha)$.   
\end{cor}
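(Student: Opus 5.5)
Both implications go through the dictionary of Theorem \ref{Hom-ant-pre-Lie-by-ant-O-oper} and Proposition \ref{Prop-inv-O-oper-autom-strong}, with the identity map as the natural candidate in one direction and transport of structure along $T$ in the other.

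\emph{Necessity.} Suppose $(\mathcal A,\circ,\alpha)$ is a compatible Hom-anti-pre-Lie superalgebra structure on $(\mathcal A,[\cdot,\cdot],\alpha)$, i.e. $[x,y]=x\circ y-(-1)^{|x||y|}y\circ x$. By Proposition \ref{equiv-H-ant--pre-Lie-Hom-Lie admiss}, $(\mathcal A,-\mathfrak L_\circ,\alpha)$ is a representation of $(\mathcal A,[\cdot,\cdot],\alpha)$. I take $T=\mathrm{id}:\mathcal A\to\mathcal A$ as the candidate operator with respect to this representation.
\begin{enumerate}
\item Condition \eqref{cond-ant-O-oper-Hom-Lie1} holds trivially, since $\beta=\alpha$ and $T=\mathrm{id}$.
\item For \eqref{cond-ant-O-oper-Hom-Lie2}, with $\rho=-\mathfrak L_\circ$ the right-hand side is
$$(-1)^{|x||y|}\rho(y)x-\rho(x)y=-(-1)^{|x||y|}y\circ x+x\circ y=[x,y],$$
which is exactly the left-hand side.
\end{enumerate}
So $\mathrm{id}$ is an invertible anti-super-$\mathcal O$-operator.

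\emph{Sufficiency.} Let $T:V\to\mathcal A$ be an invertible anti-super-$\mathcal O$-operator associated to a representation $(V,\rho,\beta)$.
\begin{enumerate}
\item By Proposition \ref{Prop-inv-O-oper-autom-strong}, $T$ is strong.
\item Hence Theorem \ref{Hom-ant-pre-Lie-by-ant-O-oper} makes $(V,\circ_V,\beta)$, with $u\circ_V v=-\rho(T(u))v$, a Hom-anti-pre-Lie superalgebra. It also transports this to $T(V)=\mathcal A$ via
$$x\circ_\mathcal A y=T(T^{-1}(x)\circ_V T^{-1}(y))=-T\big(\rho(x)T^{-1}(y)\big).$$
\item Because $T$ is bijective and intertwines $\beta$ with $\alpha$, the triple $(\mathcal A,\circ_\mathcal A,\alpha)$ is isomorphic to $(V,\circ_V,\beta)$, so it is a Hom-anti-pre-Lie superalgebra.
\end{enumerate}

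It remains to check compatibility, which is the main point to verify. Set $u=T^{-1}x$ and $v=T^{-1}y$. Then
$$x\circ_\mathcal A y-(-1)^{|x||y|}y\circ_\mathcal A x=T\big((-1)^{|u||v|}\rho(T(v))u-\rho(T(u))v\big)=[T(u),T(v)]=[x,y]$$
by \eqref{cond-ant-O-oper-Hom-Lie2}, using that $T$ is even so the parities agree. Thus the sub-adjacent Hom-Lie superalgebra of $(\mathcal A,\circ_\mathcal A,\alpha)$ is $(\mathcal A,[\cdot,\cdot],\alpha)$, and the structure is compatible.

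The only subtleties are sign bookkeeping and confirming that the structure transported along the even isomorphism $T$ keeps the twist $\alpha=T\beta T^{-1}$. Both follow directly from \eqref{cond-ant-O-oper-Hom-Lie1}.
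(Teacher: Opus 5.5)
Your proposal is correct and follows the paper's proof essentially step for step: $T=\mathrm{id}$ with the representation $(\mathcal A,-\mathfrak L_\circ,\alpha)$ for necessity, and for sufficiency Proposition \ref{Prop-inv-O-oper-autom-strong} plus Theorem \ref{Hom-ant-pre-Lie-by-ant-O-oper}, transport along $T$, and a commutator check. Your sign bookkeeping actually matches \eqref{cond-ant-O-oper-Hom-Lie2} as stated. The paper's displayed expressions for $[x,y]$ in both directions contain sign slips; for instance, its necessity line literally evaluates to $[y,x]$.
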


\begin{proof}
  Suppose that $(\mathcal A,\circ,\alpha)$ is a compatible
Hom-anti-pre-Lie superalgebra structure on the
Hom-Lie superalgebra $(\mathcal A,[\cdot,\cdot],\alpha)$.
Then
\[
[x,y]
=
x\circ y
-
(-1)^{|x||y|}y\circ x,
\qquad
\forall x,y\in \mathcal H(\mathcal A).
\]
Moreover,
\[
[x,y]
=
-(-\mathfrak L_\circ(y))x
+
(-1)^{|x||y|}(-\mathfrak L_\circ(x))y,
\qquad
\forall x,y\in \mathcal H(\mathcal A).
\]
Thus
\[
T=\mathrm{id}_{\mathcal A}
\]
is an invertible Hom-anti-$\mathcal O$-operator of
$(\mathcal A,[\cdot,\cdot],\alpha)$ associated to the
representation $(\mathcal A,-\mathfrak L_\circ,\alpha)$.

Conversely, suppose that
\[
T:V\longrightarrow \mathcal A
\]
is an invertible Hom-anti-$\mathcal O$-operator of
$(\mathcal A,[\cdot,\cdot],\alpha)$ associated to the
representation $(V,\rho,\beta)$.
Then by Proposition \ref{Prop-inv-O-oper-autom-strong},
$T$ is strong and by 
Theorem \ref{Hom-ant-pre-Lie-by-ant-O-oper}, there exists a Hom-anti-pre-Lie
superalgebra structure on $V$ given by
\[
u\circ v
=
-\rho(T(u))v,
\qquad
\forall u,v\in \mathcal H(V).
\]
Moreover, since $T$ is invertible, there is an induced
Hom-anti-pre-Lie superalgebra structure on the
underlying superspace of $\mathcal A$ defined by
\[
x\circ_{\mathcal A} y
=
T\bigl(T^{-1}(x)\circ T^{-1}(y)\bigr),
\qquad
\forall x,y\in \mathcal H(\mathcal A).
\]
Explicitly, for any homogeneous elements
$x,y\in \mathcal H(\mathcal A)$, there exist
$u,v\in \mathcal H(V)$ such that
$x=T(u)$ and $y=T(v)$. Hence
\begin{align*}
x\circ_{\mathcal A} y
&=
T(u)\circ_{\mathcal A}T(v)\\
&=
T(u\circ v)\\
&=
-T\bigl(\rho(T(u))v\bigr)\\
&=
-T\bigl(\rho(x)T^{-1}(y)\bigr).
\end{align*}
Furthermore, by the definition of an anti-super-$\mathcal O$-operator,
we have
\begin{align*}
[x,y]
&=
[T(u),T(v)]\\
&=
T\Bigl(
\rho(T(v))u
-
(-1)^{|u||v|}
\rho(T(u))v
\Bigr)\\
&=
T\Bigl(
\rho(y)T^{-1}(x)
-
(-1)^{|x||y|}
\rho(x)T^{-1}(y)
\Bigr)\\
&=
x\circ_{\mathcal A} y
-
(-1)^{|x||y|}
y\circ_{\mathcal A} x.
\end{align*}
Therefore,
$(\mathcal A,\circ_{\mathcal A},\alpha)$
is a Hom-anti-pre-Lie superalgebra whose
sub-adjacent Hom-Lie superalgebra is
$(\mathcal A,[\cdot,\cdot],\alpha)$.
  
\end{proof}
\begin{df}(\cite{wi-oth1})
Let $(\mathcal A,[\cdot,\cdot],\alpha)$ be a Hom-Lie superalgebra, and $\mathfrak B$ an even
super-commutative bilinear form on $\mathcal A$, such that for any $x,y,z\in \mathcal H(\mathcal A)$, the following condition hold:.
\begin{equation}\label{sup-comm-2-cocyc}
\mathfrak B(\alpha(x),[y,z])
+(-1)^{|x|(|y|+|z|)}\mathfrak B(\alpha(y),[z,x])
+(-1)^{|z|(|x|+|y|)}\mathfrak B(\alpha(z),[x,y])=0.
\end{equation}
Then the bilinear form $\mathfrak B$ is called a
\textbf{super-commutative $2$-cocycle} on the Hom-Lie superalgebra $\mathcal A$.    
\end{df}
\begin{thm}(\cite{wi-oth1})\label{ant-h-pr-Lie-from-nondeg-bil}
  Let $\mathfrak{B}$ be a nondegenerate super-commutative $2$-cocycle on a multiplicative Hom-Lie superalgebra $(\mathcal A,[\cdot,\cdot],\alpha)$. Then there exists a compatible Hom-anti-pre-Lie superalgebra structure $\circ$ on $(\mathcal A,[\cdot,\cdot],\alpha)$ given by \begin{equation}\label{inv-Hom-anti-pre-Lie}\mathfrak{B}(x\circ y,\alpha(z))=(-1)^{|x||y|}\mathfrak{B}(\alpha(y),[x,z]), \;\;\forall x,y,z\in \mathcal H(\mathcal A). \end{equation}  
\end{thm}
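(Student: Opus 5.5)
We prove Theorem \ref{ant-h-pr-Lie-from-nondeg-bil} by producing an invertible anti-super-$\mathcal O$-operator and then applying Corollary \ref{cor-comp-h-anti-pre-Lie-inv-O-op}. Since $\mathfrak B$ is nondegenerate, it defines an even linear isomorphism $\mathfrak B^\sharp:\mathcal A\to\mathcal A^*$ by $\langle \mathfrak B^\sharp(x),y\rangle=\mathfrak B(x,y)$. We take $V=\mathcal A^*$ with the coadjoint-type representation $(\mathcal A^*,ad^*,\beta)$. The twist $\beta$ is chosen so that $\mathfrak B^\sharp\circ\alpha=\beta\circ\mathfrak B^\sharp$. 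The candidate operator is $T=(\mathfrak B^\sharp)^{-1}$. By Proposition \ref{inver-ant-O-op-iff-anti-der}, it is enough to check that $\mathfrak D=\mathfrak B^\sharp$ is an even super anti-derivation $\mathcal A\to\mathcal A^*$.

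To check this, we pair both sides of \eqref{anti-der-Hom-Lie2} with $\alpha(z)$. Two ingredients do the work: the super-commutativity of $\mathfrak B$, and the cocycle identity \eqref{sup-comm-2-cocyc}. With these, the left side $\mathfrak B([x,y],\alpha(z))$ is rewritten as a signed sum of $\mathfrak B(\alpha(x),[y,z])$ and $\mathfrak B(\alpha(y),[z,x])$. These two terms are exactly the pairings of $ad^*$ terms on the right-hand side, so the anti-derivation identity holds.

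The main obstacle is making sense of $\beta$ and $ad^*$ in the Hom setting, since the dual representation normally involves $\alpha^{-1}$ or $\alpha^*$. To avoid this, I will avoid the dual altogether and work directly on $\mathcal A$. Define $\circ$ by \eqref{inv-Hom-anti-pre-Lie}, which determines $x\circ y$ uniquely whenever $\alpha$ is surjective; I expect this to be implicitly needed, i.e.\ the regular case. Then verify the axioms of Proposition \ref{equiv-H-ant--pre-Lie-Hom-Lie admiss}(2) directly by pairing with $\alpha(w)$. The steps are as follows.

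\textbf{Compatibility.} Pair $x\circ y-(-1)^{|x||y|}y\circ x$ with $\alpha(z)$. This gives
\[(-1)^{|x||y|}\mathfrak B(\alpha(y),[x,z])-\mathfrak B(\alpha(x),[y,z]),\]
which equals $\mathfrak B([x,y],\alpha(z))$ by the cocycle identity and super-commutativity. Nondegeneracy then gives $[x,y]_\circ=[x,y]$.

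\textbf{Identity \eqref{cond-Hom-ant-pre-Lie1}.} Pair both sides with $\alpha^2(w)$. Multiplicativity, $\alpha[\cdot,\cdot]=[\alpha\cdot,\alpha\cdot]$, lets us move $\alpha$ through the defining relation twice, so each side becomes a $\mathfrak B(\alpha^2(z),\,\cdot\,)$-expression in double brackets. The two sides then agree by the Hom super-Jacobi identity \eqref{H-sJ}.

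\textbf{Identity \eqref{cond-Hom-ant-pre-Lie2}.} By the compatibility step this reduces to $\circlearrowleft[[x,y],\alpha(z)]=0$ with the given signs, which is again Hom super-Jacobi.

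I expect the sign bookkeeping in the second step, together with the need to put $\alpha$ in the correct slots, to be the main technical obstacle.
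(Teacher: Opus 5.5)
Your compatibility step is correct. You are also right that $\alpha$ must be surjective; without that, \eqref{inv-Hom-anti-pre-Lie} does not even define $\circ$.

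\textbf{The gap.} It sits in your step for \eqref{cond-Hom-ant-pre-Lie1}, at the claim that multiplicativity lets you ``move $\alpha$ through the defining relation twice''. Pairing with $\alpha^2(w)$ gives
\[
\mathfrak B\bigl(\alpha(x)\circ(y\circ z),\alpha^2(w)\bigr)=(-1)^{|x|(|y|+|z|)}\mathfrak B\bigl(\alpha(y\circ z),\alpha([x,w])\bigr).
\]
But \eqref{inv-Hom-anti-pre-Lie} only controls $\mathfrak B(y\circ z,\alpha(\cdot))$, never $\mathfrak B(\alpha(y\circ z),\cdot)$. Multiplicativity of the bracket says nothing about how $\mathfrak B$ interacts with $\alpha$. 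What you actually need is $\alpha$-invariance, $\mathfrak B(\alpha(a),\alpha(b))=\mathfrak B(a,b)$, and that is not among the hypotheses.

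Your abandoned dual route hides the same condition. The Hom coadjoint representation has twist $(\alpha^{-1})^*$, and $\mathfrak B^\sharp\alpha=(\alpha^{-1})^*\mathfrak B^\sharp$ holds exactly when $\mathfrak B$ is $\alpha$-invariant. Declaring $\beta:=\mathfrak B^\sharp\alpha(\mathfrak B^\sharp)^{-1}$ does not in general make $(\mathcal A^*,\mathrm{ad}^*,\beta)$ a representation.

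\textbf{A counterexample.} Without invariance the statement itself fails, even for regular, multiplicative, purely even algebras. Take $\mathcal A=\mathcal A_{\bar 0}=\mathrm{span}\{e_1,e_2\}$ with $[e_1,e_2]=e_2$, $\alpha(e_1)=e_1+e_2$ and $\alpha(e_2)=e_2$.
\begin{itemize}
\item $\alpha$ is an invertible Lie automorphism with $\alpha[\cdot,\cdot]=[\cdot,\cdot]$. So this is a Yau twist, hence a regular multiplicative Hom-Lie algebra.
\item The left side of \eqref{sup-comm-2-cocyc} is alternating in $x,y,z$, so it vanishes in dimension $2$. Hence $\mathfrak B(e_i,e_j)=\delta_{ij}$ is a nondegenerate commutative $2$-cocycle.
\item $\mathfrak B$ is not $\alpha$-invariant, since $\mathfrak B(\alpha(e_1),\alpha(e_1))=2$.
\item Solving \eqref{inv-Hom-anti-pre-Lie} gives $e_1\circ e_1=e_1\circ e_2=-e_1+e_2$ and $e_2\circ e_1=e_2\circ e_2=-e_1$.
\item At $(x,y,z)=(e_1,e_2,e_1)$ the left side of \eqref{cond-Hom-ant-pre-Lie1} is $2e_1-e_2$, while $[e_2,e_1]\circ\alpha(e_1)=2e_1$.
\end{itemize}

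\textbf{With the missing hypothesis.} If you add that $\alpha$ is invertible and $\mathfrak B$ is $\alpha$-invariant, your direct plan works. Applying invariance twice gives
\begin{align*}
\mathfrak B\bigl(\alpha(x)\circ(y\circ z),\alpha^2(w)\bigr)&=(-1)^{|x|(|y|+|z|)+|y||z|}\,\mathfrak B\bigl(\alpha^2(z),[\alpha(y),[x,w]]\bigr),\\
\mathfrak B\bigl([y,x]\circ\alpha(z),\alpha^2(w)\bigr)&=(-1)^{|z|(|x|+|y|)}\,\mathfrak B\bigl(\alpha^2(z),[[y,x],\alpha(w)]\bigr).
\end{align*}
Hom super-Jacobi then yields
\[
\alpha(x)\circ(y\circ z)-(-1)^{|x||y|}\alpha(y)\circ(x\circ z)=(-1)^{|x||y|}[y,x]\circ\alpha(z).
\]
Note the sign $(-1)^{|x||y|}$. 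The displayed \eqref{cond-Hom-ant-pre-Lie1} omits it, but the paper itself uses it in the proof that admissible Hom-Novikov superalgebras are Hom-anti-pre-Lie. Your ``the two sides agree'' only holds with this sign.

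\textbf{The step for \eqref{cond-Hom-ant-pre-Lie2}.} This step is also misstated. $[x,y]\circ\alpha(z)$ is not $[[x,y],\alpha(z)]$, so compatibility alone does not reduce it to Jacobi. Instead, expand the signed sum $\circlearrowleft[\alpha(x),[y,z]]_\circ$ in terms of $\circ$:
\begin{itemize}
\item By \eqref{cond-Hom-ant-pre-Lie1}, the signed cyclic sum of $\alpha(x)\circ[y,z]$ equals minus the left side of \eqref{cond-Hom-ant-pre-Lie2}.
\item Hom super-Jacobi for $[\cdot,\cdot]_\circ=[\cdot,\cdot]$ says that the same sum equals plus the left side.
\end{itemize}
So twice the left side of \eqref{cond-Hom-ant-pre-Lie2} vanishes, which suffices in characteristic $0$.
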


\section{Hom-Novikov superalgebras and admissible Hom-Novikov superalgebras}

We introduce the notion of admissible Hom-Novikov superalgebras as a
subclass of Hom-anti-pre-Lie superalgebras. Such a
correspondence gives the construction of Hom-anti-pre-Lie superalgebras
from commutative associative superalgebras with derivations or
admissible pairs and leads to the introduction of the notions of Hom-anti-pre-Lie Poisson superalgebras and admissible Hom-Novikov-Poisson superalgebras.

\subsection{Correspondence between Hom-Novikov superalgebras and admissible Hom-Novikov superalgebras}

We introduce the notion of admissible Hom-Novikov superalgebras as a
subclass of Hom-anti-pre-Lie superalgebras, whose name comes from a
correspondence between them and Hom-Novikov superalgebras in terms of
$q$-algebras. The interpretation of admissible Hom-Novikov superalgebras in
terms of the corresponding anti-super-$\mathcal O$-operators and some
examples are given.

\begin{df}\label{defi:Hom-Novikov superalgebras}(\cite{Zhang-Hou-Bai})
A \textbf{Hom-Novikov superalgebra} is a Hom-pre-Lie superalgebra $(\mathcal A,\star,\alpha)$ such that
\begin{equation}\label{eq:defi:Hom-Novikov superalgebras1}
(x\star y)\star \alpha(z)=(-1)^{|y||z|}(x\star z)\star \alpha(y), \forall x,y,z\in\mathcal H(\mathcal A).
\end{equation}
\end{df}
\begin{exa}\label{ex-H-Nov-sup}
Let $\mathcal{A} = \mathcal{A}_{\bar{0}} \oplus \mathcal{A}_{\bar{1}}$ be a $3$-dimensional $\mathbb{Z}_2$-graded vector space, spanned by the basis $\{e_1, e_2, f\}$ with degrees $|e_1| = |e_2| = \bar{0}$ and $|f| = \bar{1}$.

Define the bilinear product $\star: \mathcal{A} \times \mathcal{A} \to \mathcal{A}$ on the basis elements by:
\begin{equation*}
e_1 \star e_1 = e_2, \quad f \star f = e_2,
\end{equation*}
with all other products between basis elements being zero.

Define the linear map $\alpha: \mathcal{A} \to \mathcal{A}$ by:
\begin{equation*}
\alpha(e_1) = e_1, \quad \alpha(e_2) = e_2, \quad \alpha(f) = -f.
\end{equation*}

Then $(\mathcal{A}, \star, \alpha)$ is a Hom-Novikov superalgebra.
\end{exa}

\begin{df}\label{defi:admissible Hom-Novikov superalgebras} An \textbf{admissible
Hom-Novikov superalgebra} is a triplet $(\mathcal A,\circ,\alpha)$ consisting of a $\mathbb Z_2$-graded vector space $\mathcal A$, an even bilinear map $\circ:\mathcal A\times \mathcal A
\rightarrow \mathcal A$ and an even linear map $\alpha:\mathcal A\to\mathcal A$ such that Eq. \eqref{cond-Hom-ant-pre-Lie1} and the following equation hold:
\begin{equation}\label{admiss-Hom-Nov-sup2}
 2\alpha(x)\circ[y,z]_\circ=(x\circ y)\circ \alpha(z)-(-1)^{|y||z|}(x\circ z)\circ \alpha(y), 
\end{equation} 
for any $x,y,z\in\mathcal{H}(\mathcal{A})$, where $[\cdot,\cdot]_\circ$ is defined by Eq. \eqref{comp-Lie-struc}.
\end{df}

\begin{prop}\label{H-admissible is H-anti-pre-Lie}
An admissible Hom-Novikov superalgebra is a Hom-anti-pre-Lie superalgebra.
\end{prop}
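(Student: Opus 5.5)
The plan is to use Proposition \ref{equiv-H-ant--pre-Lie-Hom-Lie admiss}, or simply the definition. An admissible Hom-Novikov superalgebra already satisfies Eq. \eqref{cond-Hom-ant-pre-Lie1} by definition, so it only remains to verify Eq. \eqref{cond-Hom-ant-pre-Lie2}, or equivalently Eq. \eqref{cond-equiv1}. For homogeneous $x,y,z$ I introduce the two cyclic sums
\[
S_1(x,y,z)=\circlearrowleft_{x,y,z}(-1)^{|x||z|}\alpha(x)\circ[y,z]_\circ,\qquad
S_2(x,y,z)=\circlearrowleft_{x,y,z}(-1)^{|x||z|}[x,y]_\circ\circ\alpha(z).
\]
I will derive two linear relations between $S_1$ and $S_2$ and conclude that both vanish, using that $\mathrm{char}\,\mathbb K=0$.

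\emph{First relation, from Eq. \eqref{admiss-Hom-Nov-sup2}.} Multiply Eq. \eqref{admiss-Hom-Nov-sup2} by $(-1)^{|x||z|}$ and sum over cyclic permutations of $(x,y,z)$. The left-hand side becomes $2S_1$. On the right-hand side, the term $(-1)^{|x||z|}(x\circ y)\circ\alpha(z)$ pairs with the second term of the cyclic shift $(y,z,x)$. That term is $-(-1)^{|y||x|}(-1)^{|x||y|+|x||z|}(y\circ x)\circ\alpha(z)$, and after simplifying the signs ($|x|^2=|x|$ is harmless here since the signs combine into $(-1)^{|x||z|+|x||y|}$) the pair gives $(-1)^{|x||z|}[x,y]_\circ\circ\alpha(z)$. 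Hence the right-hand side is exactly $S_2$, and $2S_1=S_2$.

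\emph{Second relation, from Eq. \eqref{cond-Hom-ant-pre-Lie1}.} Again multiply by $(-1)^{|x||z|}$ and sum cyclically. The right-hand side gives $\circlearrowleft(-1)^{|x||z|}[y,x]_\circ\circ\alpha(z)=-S_2$ by super-skew-symmetry of $[\cdot,\cdot]_\circ$. On the left-hand side, $(-1)^{|x||z|}\alpha(x)\circ(y\circ z)$ combines with the term $-(-1)^{|z||y|}(-1)^{|z||y|}\cdots\alpha(x)\circ(z\circ y)$ coming from the cyclic shift $(z,x,y)$, and together they produce $(-1)^{|x||z|}\alpha(x)\circ[y,z]_\circ$. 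Thus the left-hand side equals $S_1$, and $S_1=-S_2$.

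Combining the two relations gives $2S_1=S_2=-S_1$, so $3S_1=0$. Since $\mathrm{char}\,\mathbb K=0$, this yields $S_1=0$ and hence $S_2=0$. This is precisely Eq. \eqref{cond-Hom-ant-pre-Lie2}, and also Eq. \eqref{cond-equiv1}, so $(\mathcal A,\circ,\alpha)$ is a Hom-anti-pre-Lie superalgebra. The main obstacle I expect is the careful sign bookkeeping in pairing terms across cyclic shifts. I would check each pairing on the purely even case first, and then verify the Koszul signs by tracking how each of $x,y,z$ passes the others.
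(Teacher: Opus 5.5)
Your strategy is the paper's: the paper also combines the two cyclic relations and concludes $3S_1=0$, working with the unweighted Koszul cyclic sums, which are $(-1)^{|x||z|}S_1$ and $(-1)^{|x||z|}S_2$. Your first relation $2S_1=S_2$ is correct. The left-hand side $S_1$ of your second relation is also correct, although the intermediate sign expressions you wrote for the pairings are garbled.

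The right-hand side of the second relation, however, has a real sign error, at exactly the step you flagged. Super-skew-symmetry gives $[y,x]_\circ=-(-1)^{|x||y|}[x,y]_\circ$, so
\[
\circlearrowleft_{x,y,z}(-1)^{|x||z|}[y,x]_\circ\circ\alpha(z)=-\circlearrowleft_{x,y,z}(-1)^{|x||z|+|x||y|}[x,y]_\circ\circ\alpha(z).
\]
Its three terms differ from those of $-S_2$ by the non-uniform factors $(-1)^{|x||y|}$, $(-1)^{|y||z|}$ and $(-1)^{|z||x|}$. For example, when $x,y$ are odd and $z$ is even, the difference is $2[x,y]_\circ\circ\alpha(z)$. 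So if you read Eq.~\eqref{cond-Hom-ant-pre-Lie1} literally as printed, you do not get $S_1=-S_2$, and the argument does not close.

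The fix is to use the Koszul-consistent form of the identity:
\[
\alpha(x)\circ(y\circ z)-(-1)^{|x||y|}\alpha(y)\circ(x\circ z)=(-1)^{|x||y|}[y,x]_\circ\circ\alpha(z)=-[x,y]_\circ\circ\alpha(z).
\]
This is what the representation condition for $-\mathfrak L_\circ$ in Proposition~\ref{equiv-H-ant--pre-Lie-Hom-Lie admiss}(3) actually says. It is also what the paper's own proof uses: the right-hand side of its first display begins with $(-1)^{|x||y|}[y,x]\circ\alpha(z)$. The printed \eqref{cond-Hom-ant-pre-Lie1} is missing this factor, and it disagrees with the representation form whenever $x,y$ are both odd.

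With the corrected identity, the weighted cyclic sum of the right-hand side is $-\circlearrowleft_{x,y,z}(-1)^{|x||z|}[x,y]_\circ\circ\alpha(z)=-S_2$ directly, with no skew-symmetry needed. Your computation of the left-hand side as $S_1$ is unchanged. Then $2S_1=S_2=-S_1$ gives $S_1=S_2=0$ in characteristic $0$, as you intended.
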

\begin{proof}
Let $(\mathcal A,\circ,\alpha)$ be an admissible Hom-Novikov superalgebra, and $x,y,z\in
\mathcal H(\mathcal A)$. By Eq.~\eqref{cond-Hom-ant-pre-Lie1} and Proposition \ref{equiv-H-ant--pre-Lie-Hom-Lie admiss},
we have
\begin{align*}
\alpha(x)\circ[y,z]+(-1)^{|x|(|y|+|z|)}\alpha(y)\circ[z,x]+(-1)^{|z|(|x|+|y|)}\alpha(z)\circ[x,y]&=(-1)^{|x||y|}[y,x]\circ \alpha(z)\\&+(-1)^{|x|(|y|+|z|)+|y||z|}[z,y]\circ \alpha(x)\\&+(-1)^{|y||z|}[x,z]\circ \alpha(y).
\end{align*}

On the other hand, by Eq.~(\ref{admiss-Hom-Nov-sup2}), we have
\begin{eqnarray*}
&&2\alpha(x)\circ[y,z]+2(-1)^{|x|(|y|+|z|)}\alpha(y)\circ[z,x]+2(-1)^{|z|(|x|+|y|)}\alpha(z)\circ[x,y]\\
&&=(x\circ y)\circ\alpha(z)-(-1)^{|y||z|}(x\circ z)\circ\alpha(y)+(-1)^{|x|(|y|+|z|)}(y\circ z)\circ\alpha(x)\\&&-(-1)^{|x||y|}(y\circ x)\circ\alpha(z)+(-1)^{|z|(|x|+|y|)}(z\circ x)\circ\alpha(y)-(-1)^{|x|(|y|+|z|)+|y||z|}(z\circ y)\circ\alpha(x)\\
&&=[x,y]\circ\alpha(z)+(-1)^{|x|(|y|+|z|)}[y,z]\circ\alpha(x)+(-1)^{|z|(|x|+|y|)}[z,x]\circ\alpha(y).
\end{eqnarray*}

Thus
\begin{align*}
&\alpha(x)\circ[y,z]+(-1)^{|x|(|y|+|z|)}\alpha(y)\circ[z,x]+(-1)^{|z|(|x|+|y|)}\alpha(z)\circ[x,y]\\&=[x,y]\circ\alpha(z)+(-1)^{|x|(|y|+|z|)}[y,z]\circ\alpha(x)+(-1)^{|z|(|x|+|y|)}[z,x]\circ\alpha(y)\\&=0.
\end{align*}
Hence $(A,\circ,\alpha)$ is a Hom-anti-pre-Lie superalgebra.
\end{proof}

We introduce the notion of ``admissible Hom-Novikov superalgebras" since
they ``correspond" to Hom-Novikov superalgebras in the following sense:
Hom-Novikov algebras and admissible Hom-Novikov superalgebras can be realized
as $q$-Hom-algebras  each other.

\begin{df}\label{defi:q-algebra}
Let $\mathcal A$ be a $\mathbb Z_2$-vector space with an even bilinear operation
$\star:\mathcal A\times \mathcal A \rightarrow \mathcal A$. Define a bilinear operation
$\circ:\mathcal A\times \mathcal A\rightarrow \mathcal A$ by
\begin{equation}\label{eq:defi:q-algebra}
x\circ y=x\star y+(-1)^{|x||y|}qy\star x, \forall x,y\in\mathcal H(\mathcal A),
\end{equation}
for some $q\in\mathbb{F}$. Then $(\mathcal A,\circ,\alpha)$ is called the
\textbf{$q$-Hom-superalgebra} of $(A,\star,\alpha)$.
\end{df}

\begin{prop}\label{pro:anti-pre-Lie algebras from Novikov algebras}
Let $(\mathcal A,\star,\alpha)$ be a Hom-pre-Lie superalgebra, and $(\mathcal A,\circ,\alpha)$ be the
$2$-superalgebra of $(\mathcal A,\star,\alpha)$, that is,
\begin{equation}\label{eq:pro:H-anti-pre-Lie superalgebras from H-Novikov superalgebras}
x\circ y=x\star y+(-1)^{|x||y|}2y\star x,\forall x,y\in\mathcal H(\mathcal A).
\end{equation}
Then $(\mathcal A,\circ,\alpha)$ is a Hom-anti-pre-Lie superalgebra if and only if
$(\mathcal A,\star,\alpha)$ is further a Hom-Novikov superalgebra. Moreover, in this case,
$(\mathcal A,\circ,\alpha)$ is an admissible Hom-Novikov superalgebra.
\end{prop}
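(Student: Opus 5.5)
We substitute $x\circ y=x\star y+2(-1)^{|x||y|}y\star x$ into the defining identities (\ref{cond-Hom-ant-pre-Lie1}) and (\ref{cond-Hom-ant-pre-Lie2}) of a Hom-anti-pre-Lie superalgebra. We then express everything in terms of the Hom-pre-Lie associator and the right-commutativity defect
\[
N(x,y,z)=(x\star y)\star\alpha(z)-(-1)^{|y||z|}(x\star z)\star\alpha(y).
\]

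The first step is the commutator. Since
\[
[x,y]_\circ=x\star y+2(-1)^{|x||y|}y\star x-(-1)^{|x||y|}y\star x-2x\star y=-[x,y]_\star ,
\]
the bracket $[\cdot,\cdot]_\circ$ is minus the supercommutator of $\star$. Because $(\mathcal A,\star,\alpha)$ is Hom-pre-Lie, $(\mathcal A,[\cdot,\cdot]_\star,\alpha)$ is a Hom-Lie superalgebra, and hence so is $(\mathcal A,-[\cdot,\cdot]_\star,\alpha)$. So $(\mathcal A,\circ,\alpha)$ is automatically Hom-Lie-admissible. By Proposition \ref{equiv-H-ant--pre-Lie-Hom-Lie admiss} (or directly from the proof of Proposition \ref{H-admissible is H-anti-pre-Lie}), condition (\ref{cond-Hom-ant-pre-Lie2}) is then essentially the Hom super-Jacobi identity for $[\cdot,\cdot]_\circ$ once (\ref{cond-Hom-ant-pre-Lie1}) holds. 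Indeed, expanding the super-Jacobi identity in terms of $\circ$ gives (\ref{cond-Hom-ant-pre-Lie2}) minus the cyclic sum of $\alpha(x)\circ[y,z]_\circ$, and the latter is controlled by (\ref{cond-Hom-ant-pre-Lie1}). So the equivalence reduces to studying (\ref{cond-Hom-ant-pre-Lie1}).

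The main step, and the main computational obstacle, is expanding
\[
E(x,y,z)=\alpha(x)\circ(y\circ z)-(-1)^{|x||y|}\alpha(y)\circ(x\circ z)-[y,x]_\circ\circ\alpha(z)
\]
in terms of $\star$. This produces terms of the types $\alpha(x)\star(y\star z)$, $\alpha(x)\star(z\star y)$, $(y\star z)\star\alpha(x)$, $(z\star y)\star\alpha(x)$ and $[x,y]_\star\star\alpha(z)$, together with $\alpha(z)\star[x,y]_\star$, all with sign factors. Using the Hom-pre-Lie identity (\ref{HPL}) to trade $\alpha(x)\star(y\star z)-(-1)^{|x||y|}\alpha(y)\star(x\star z)$ for $[x,y]_\star\star\alpha(z)$, I expect $E(x,y,z)$ to collapse to a combination of the form
\[
E(x,y,z)=c\bigl(N(x,y,z)-(-1)^{|x||y|}N(y,x,z)\bigr)+c'\,(\text{terms in } N)
\]
for some nonzero constant $c$. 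Concretely, the $q=2$ coefficient is exactly what makes the associator-type terms cancel: this is the reason for the choice $q=2$. I will then show that vanishing of $E$ for all $x,y,z$ is equivalent to $N\equiv0$. If $N=0$, then clearly $E=0$. For the converse, I will specialize and permute the variables and use super-symmetries of $N$ in its last two arguments, namely $N(x,y,z)=-(-1)^{|y||z|}N(x,z,y)$, to isolate $N$ itself. Tracking the signs here is the delicate part; I would first do the computation with $\alpha=\mathrm{id}$ and trivial grading as a guide, following Liu--Bai, and then insert the Koszul signs and $\alpha$'s.

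Finally, assuming $(\mathcal A,\star,\alpha)$ is Hom-Novikov, I will verify (\ref{admiss-Hom-Nov-sup2}). The left-hand side $2\alpha(x)\circ[y,z]_\circ=-2\alpha(x)\circ[y,z]_\star$ expands into four terms. The right-hand side
\[
(x\circ y)\circ\alpha(z)-(-1)^{|y||z|}(x\circ z)\circ\alpha(y)
\]
expands into eight terms. I will reduce both sides using right-commutativity (\ref{eq:defi:Hom-Novikov superalgebras1}) and (\ref{HPL}), which turns terms of the form $(y\star x)\star\alpha(z)$ into left multiplications by $\alpha$, and then compare. Together with (\ref{cond-Hom-ant-pre-Lie1}), already established, this shows that $(\mathcal A,\circ,\alpha)$ is an admissible Hom-Novikov superalgebra.
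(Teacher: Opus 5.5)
Your plan is essentially the paper's proof: $[x,y]_\circ=-[x,y]_\star$ gives Hom-Lie-admissibility, and expanding the anti-pre-Lie identity in terms of $\star$ and applying \eqref{HPL} leaves exactly $6(-1)^{|x||y|+|y||z|+|z||x|}N(z,y,x)$, so the converse is immediate and no permutation argument is needed; \eqref{admiss-Hom-Nov-sup2} then follows from \eqref{HPL} and \eqref{eq:defi:Hom-Novikov superalgebras1}. One caution: this clean collapse needs the Koszul-consistent form of \eqref{cond-Hom-ant-pre-Lie1}, i.e.\ the last term of $E$ must be $+[x,y]_\circ\circ\alpha(z)=-(-1)^{|x||y|}[y,x]_\circ\circ\alpha(z)$, which is what the paper's computation actually uses; with the literal $-[y,x]_\circ\circ\alpha(z)$ in your $E$, an extra term $2[x,y]_\star\circ\alpha(z)$ remains whenever $x$ and $y$ are both odd, and this term is not zero in general.
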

\begin{proof}
Let the sub-adjacent Hom-Lie superalgebra of $(\mathcal A,\star,\alpha)$ be $(\frak
g(A),\lbrace-,-\rbrace,\alpha)$. Then by Eq.~(\ref{eq:pro:H-anti-pre-Lie superalgebras from H-Novikov superalgebras}), we have
\begin{equation*}\label{eq:pro:anti-pre-Lie algebras from Novikov algebras2}
[x,y]=x\circ y-(-1)^{|x||y|}y\circ x=(-1)^{|x||y|}y\star x-x\star y=(-1)^{|x||y|}\lbrace y,x\rbrace,
\forall x,y\in\mathcal H(\mathcal A).
\end{equation*}
Thus $(\mathcal A,\circ,\alpha)$ is a Hom-Lie-admissible superalgebra.\\
Let $x,y,z\in\mathcal H(\mathcal A)$. Then we have

\begin{eqnarray*}
&&\alpha(x)\circ(y\circ z)
-(-1)^{|x||y|}
\alpha(y)\circ(x\circ z)
+[x,y]\circ \alpha(z)\\
&&\overset{(\ref{eq:pro:H-anti-pre-Lie superalgebras from H-Novikov superalgebras})}{=}\
\alpha(x)\star(y\star z)
+2(-1)^{|x|(|y|+|z|)}(y\star z)\star \alpha(x)
+2(-1)^{|y||z|}
\alpha(x)\star(z\star y)
\\
&&\hspace{0.4cm}+4(-1)^{|x|(|y|+|z|)+|y||z|}
(z\star y)\star \alpha(x)
-(-1)^{|x||y|}
\alpha(y)\star(x\star z)
-2(-1)^{|y||z|}
(x\star z)\star \alpha(y)\\
&&\hspace{0.4cm}
-2(-1)^{|x|(|y|+|z|)}
\alpha(y)\star(z\star x)
-4(-1)^{|z|(|x|+|y|)}
(z\star x)\star \alpha(y)
+(-1)^{|x||y|}
(y\star x)\star \alpha(z)
\\
&&\hspace{0.4cm}+2(-1)^{|x|(|y|+|z|)+|y||z|}\alpha(z)\star(y\star x)
-(x\star y)\star \alpha(z)
-2(-1)^{|z|(|x|+|y|)}
\alpha(z)\star(x\star y)\\
&&\overset{(\ref{HPL})}{=}\
2(-1)^{|x|(|y|+|z|)}
(y\star z)\star \alpha(x)
-2(-1)^{|x|(|y|+|z|)}
\alpha(y)\star(z\star x)
+(-1)^{|x|(|y|+|z|)+|y||z|}2\alpha(z)\star(y\star x)
\\
&&\hspace{0.4cm}-2(-1)^{|y||z|}
(x\star z)\star \alpha(y)
+2(-1)^{|y||z|}
\alpha(x)\star(z\star y)
-2(-1)^{|z|(|x|+|y|)}
\alpha(z)\star(x\star y)\\
&&\hspace{0.4cm}
+4(-1)^{|x|(|y|+|z|)+|y||z|}
(z\star y)\star \alpha(x)
-4(-1)^{|z|(|x|+|y|)}
(z\star x)\star \alpha(y)\\
&&\overset{(\ref{HPL})}{=}\
6(-1)^{|x|(|y|+|z|)+|y||z|}
(z\star y)\star \alpha(x)
-6(-1)^{|z|(|x|+|y|)}
(z\star x)\star \alpha(y).
\end{eqnarray*}
Then $(\mathcal A,\circ,\alpha)$ is a Hom-anti-pre-Lie superalgebra if and only if
$(\mathcal A,\star,\alpha)$ is a Hom-Novikov superalgebra. Moreover, if  $(\mathcal A,\star,\alpha)$ is a
Hom-Novikov superalgebra, then we have
\begin{eqnarray*}
&&2\alpha(x)\circ[y,z]-(x\circ y)\circ \alpha(z)+(-1)^{|y||z|}(x\circ z)\circ \alpha(y)\\
&&=2(-1)^{|y||z|}\alpha(x)\circ(z\star y)-2\alpha(x)\circ(y\star z)-(x\circ y)\circ \alpha(z)+(-1)^{|y||z|}(x\circ z)\circ \alpha(y)\\
&&\overset{(\ref{eq:pro:H-anti-pre-Lie superalgebras from H-Novikov superalgebras})}{=}2\alpha(x)(-1)^{|y||z|}\star(z\star y)+4(-1)^{|x|(|y|+|z|)+|y||z|}(z\star y)\star \alpha(x)-2\alpha(x)\star(y\star
z)\\&&\hspace{0.4cm}-(-1)^{|x|(|y|+|z|)}4(y\star z)\star \alpha(x)
-(x\star y)\star \alpha(z)-2(-1)^{|z|(|x|+|y|)}\alpha(z)\star(x\star y)-(-1)^{|x||y|}2(y\star x)\star \alpha(z)\\
&&\hspace{0.4cm}-4(-1)^{|x|(|y|+|z|)+|y||z|}\alpha(z)\star(y\star x)+(x\star z)\star \alpha(y)+2(-1)^{|x||y|}\alpha(y)\star(x\star z)\\
&&\hspace{0.4cm}+2(-1)^{|z|(|x|+|y|)}(z\star x)\star \alpha(y)+4(-1)^{|x|(|y|+|z|)}\alpha(y)\star(z\star x)\\
&&\overset{(\ref{HPL}),(\ref{eq:defi:Hom-Novikov superalgebras1})}{=}2(-1)^{|y||z|}(x\star z)\star \alpha(y)-2(x\star y)\star \alpha(z)\\
&&=0.
\end{eqnarray*}
Thus $(\mathcal A,\circ,\alpha)$ is an admissible Hom-Novikov superalgebra.
\end{proof}


By Eq.~(\ref{eq:pro:H-anti-pre-Lie superalgebras from H-Novikov superalgebras}),
there is an equivalent expression:
\begin{equation}\label{eq:pro:admissible Novikov algebras and Novikov algebras1}
x\star y=-\dfrac{1}{3}x\circ y+\dfrac{2}{3}(-1)^{|x||y|}y\circ x, \forall
x,y\in\mathcal H(\mathcal A).
\end{equation}
In terms of $q$-Hom-superalgebras, we adjust the above equation to be
\begin{equation}\label{eq:pro:admissible Novikov algebras and Novikov algebras2}
x\star y=x\circ y-2(-1)^{|x||y|}y\circ x, \forall x,y\in\mathcal H(\mathcal A).
\end{equation}

\begin{prop}\label{pro:admissible Novikov algebras and Novikov algebras}

Let $(\mathcal A,\circ,\alpha)$ be a Hom-anti-pre-Lie superalgebra, and let
$(\mathcal A,\star,\alpha)$ be the $(-2)$-superalgebra of $(\mathcal A,\circ,\alpha)$ given by Eq.~(\ref{eq:pro:admissible
Novikov algebras and Novikov algebras2}).
Then $(\mathcal A,\star,\alpha)$ is a Hom-pre-Lie superalgebra if and only if
$(\mathcal A,\circ,\alpha)$ is an admissible Hom-Novikov superalgebra. Moreover, in this case, $(\mathcal A,\star,\alpha)$ is a Hom-Novikov superalgebra.

\end{prop}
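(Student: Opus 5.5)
We expand the Hom-pre-Lie associator of $\star$ directly in terms of $\circ$, in the same spirit as the proof of Proposition \ref{pro:anti-pre-Lie algebras from Novikov algebras}. Write
$$\mathrm{as}_\star(x,y,z)=(x\star y)\star\alpha(z)-\alpha(x)\star(y\star z).$$
Substituting $x\star y=x\circ y-2(-1)^{|x||y|}y\circ x$ from Eq.~\eqref{eq:pro:admissible Novikov algebras and Novikov algebras2} into both $\mathrm{as}_\star(x,y,z)$ and $(-1)^{|x||y|}\mathrm{as}_\star(y,x,z)$ produces sixteen terms. Each term has one of two shapes: $(\cdot\circ\cdot)\circ\alpha(\cdot)$ or $\alpha(\cdot)\circ(\cdot\circ\cdot)$, with sign factors determined by the Koszul rule.

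The first step is to use the Hom-anti-pre-Lie identity \eqref{cond-Hom-ant-pre-Lie1} (equivalently \eqref{cond-equiv-Hom-anti-pre-Lie1}) to eliminate the antisymmetrized pairs of the form $\alpha(x)\circ(y\circ z)-(-1)^{|x||y|}\alpha(y)\circ(x\circ z)$ in favour of $[y,x]_\circ\circ\alpha(z)$. The terms $\alpha(z)\circ[x,y]_\circ$ that cannot be eliminated this way are left as they are. The goal is to reduce
$$\mathrm{as}_\star(x,y,z)-(-1)^{|x||y|}\mathrm{as}_\star(y,x,z)$$
to a constant multiple, presumably $-3$ or $3$, of the admissibility defect
$$\Phi(z,x,y)=2\alpha(z)\circ[x,y]_\circ-(z\circ x)\circ\alpha(y)+(-1)^{|x||y|}(z\circ y)\circ\alpha(x),$$
multiplied by the appropriate Koszul sign $(-1)^{|z|(|x|+|y|)}$. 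Where the computation needs it, we will also invoke the cyclic identity \eqref{cond-Hom-ant-pre-Lie2} or \eqref{cond-equiv1} from Proposition \ref{equiv-H-ant--pre-Lie-Hom-Lie admiss} to convert the $\alpha(\cdot)\circ[\cdot,\cdot]_\circ$ terms. Since $\mathrm{char}\,\mathbb K=0$, the scalar can be divided out, and we obtain that $\star$ is Hom-pre-Lie if and only if Eq.~\eqref{admiss-Hom-Nov-sup2} holds, i.e.\ if and only if $(\mathcal A,\circ,\alpha)$ is admissible Hom-Novikov.

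For the ``moreover'' part, assume admissibility and compute
$$(x\star y)\star\alpha(z)-(-1)^{|y||z|}(x\star z)\star\alpha(y)$$
in the same way. The terms of the form $(x\circ y)\circ\alpha(z)-(-1)^{|y||z|}(x\circ z)\circ\alpha(y)$ are replaced by $2\alpha(x)\circ[y,z]_\circ$ using \eqref{admiss-Hom-Nov-sup2}. The remaining terms involve $(y\circ x)\circ\alpha(z)$, $\alpha(y)\circ(\cdot)$ and similar expressions, and these should cancel after applying \eqref{cond-Hom-ant-pre-Lie1}, \eqref{cond-Hom-ant-pre-Lie2} and a further instance of \eqref{admiss-Hom-Nov-sup2} with permuted arguments. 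Alternatively, there is a slicker route. Since $\frac{1}{3}$-rescaling shows the $(-2)$-construction inverts the $2$-construction up to the scalar $-3$ (compare Eqs.~\eqref{eq:pro:admissible Novikov algebras and Novikov algebras1} and \eqref{eq:pro:admissible Novikov algebras and Novikov algebras2}), we can put $x\star' y=-\frac13 x\star y$. Then $\star'$ is Hom-pre-Lie with $2$-superalgebra equal to $\circ$, because both identities are homogeneous quadratic. Proposition \ref{pro:anti-pre-Lie algebras from Novikov algebras} then applies: since $\circ$ is Hom-anti-pre-Lie, $\star'$ is Hom-Novikov, and hence so is $\star$, the Novikov identity being quadratic as well.

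The main obstacle is the sign bookkeeping in the first computation. One has to track the Koszul signs through sixteen terms and make sure that the residue is exactly proportional to the admissibility defect, and not merely to its cyclic sum, which would only recover \eqref{cond-Hom-ant-pre-Lie2}. I would first verify the computation in the purely even case to pin down the coefficient, and then insert signs systematically by the Koszul rule.
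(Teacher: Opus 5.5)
Your argument for the equivalence is the paper's own: expand $\mathrm{as}_\star(x,y,z)-(-1)^{|x||y|}\mathrm{as}_\star(y,x,z)$ through Eq.~\eqref{eq:pro:admissible Novikov algebras and Novikov algebras2}, then reduce with \eqref{cond-Hom-ant-pre-Lie1}, \eqref{cond-equiv1} and \eqref{cond-Hom-ant-pre-Lie2}. Your guessed constant is off, though. The residue is exactly $-2(-1)^{|z|(|x|+|y|)}\Phi(z,x,y)$; in the even case it reads $-4\alpha(z)\circ[x,y]_\circ+2(z\circ x)\circ\alpha(y)-2(z\circ y)\circ\alpha(x)$. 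Your planned even-case check will therefore give $-2$, not $\pm3$.

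For the ``moreover'' part, the paper does a second direct expansion of $(x\star y)\star\alpha(z)-(-1)^{|y||z|}(x\star z)\star\alpha(y)$. It cancels terms using \eqref{admiss-Hom-Nov-sup2} together with \eqref{cond-Hom-ant-pre-Lie1}, \eqref{cond-Hom-ant-pre-Lie2} and \eqref{cond-equiv1}. Your rescaling route is genuinely different, and it is correct:
\begin{itemize}
\item $\star'=-\frac13\star$ is precisely the product of Eq.~\eqref{eq:pro:admissible Novikov algebras and Novikov algebras1}.
\item A one-line check gives $x\star' y+2(-1)^{|x||y|}y\star' x=x\circ y$.
\item $\star'$ is Hom-pre-Lie by the first part and quadratic homogeneity.
\item Proposition~\ref{pro:anti-pre-Lie algebras from Novikov algebras} shows that \eqref{cond-Hom-ant-pre-Lie1} for $\circ$ is $6$ times the right-commutativity defect of $\star'$. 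Hence $\star'$, and so $\star$, is Hom-Novikov.
\end{itemize}

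What your route buys: it avoids a second sign-heavy computation. It also shows that admissibility is only needed to make $\star$ Hom-pre-Lie; right-commutativity then follows from $\circ$ being Hom-anti-pre-Lie alone.

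The same trick, combined with the ``moreover'' clause of Proposition~\ref{pro:anti-pre-Lie algebras from Novikov algebras}, gives the implication ``$\star$ Hom-pre-Lie $\Rightarrow$ $\circ$ admissible'' with no computation. So only the converse implication genuinely needs the expansion. This is consistent with the paper's remark that the proposition cannot be read off directly from Proposition~\ref{pro:anti-pre-Lie algebras from Novikov algebras}.

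The price is that your route depends on that earlier proposition, whereas the paper's direct computation is self-contained.
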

\begin{proof}
Let $x,y,z\in \mathcal{H}(\mathcal A)$ be homogeneous elements. Then we have
\begin{eqnarray*}
&&(x\star y)\star \alpha(z)-\alpha(x)\star (y\star z)\\
&&\hspace{0.4cm}-(-1)^{|x||y|}\Big((y\star x)\star \alpha(z)-\alpha(y)\star (x\star z)\Big)\\
&&\overset{(\ref{eq:pro:admissible Novikov algebras and Novikov algebras2})}{=}
(x\circ y)\circ \alpha(z)
-2(-1)^{|z|(|x|+|y|)}\alpha(z)\circ (x\circ y)\\
&&\hspace{0.4cm}
-2(-1)^{|x||y|}(y\circ x)\circ \alpha(z)
+4(-1)^{|x||y|+|z|(|x|+|y|)}\alpha(z)\circ (y\circ x)\\
&&\hspace{0.4cm}
-\alpha(x)\circ (y\circ z)
+2(-1)^{|x|(|y|+|z|)}(y\circ z)\circ \alpha(x)\\
&&\hspace{0.4cm}
+2\alpha(x)\circ (z\circ y)
-4(-1)^{|y||z|}(z\circ y)\circ \alpha(x)\\
&&\hspace{0.4cm}
-(-1)^{|x||y|}(y\circ x)\circ \alpha(z)
+2(-1)^{|x||y|+|z|(|x|+|y|)}\alpha(z)\circ (y\circ x)\\
&&\hspace{0.4cm}
+2(-1)^{|x||y|}(x\circ y)\circ \alpha(z)
-4(-1)^{|x||y|+|z|(|x|+|y|)}\alpha(z)\circ (x\circ y)\\
&&\hspace{0.4cm}
+(-1)^{|x||y|}\alpha(y)\circ (x\circ z)
-2(-1)^{|x||y|+|y|(|x|+|z|)}(x\circ z)\circ \alpha(y)\\
&&\hspace{0.4cm}
-2(-1)^{|x||y|}\alpha(y)\circ (z\circ x)
+4(-1)^{|x||y|+|z||x|}(z\circ x)\circ \alpha(y)\\
&&\overset{(\ref{cond-Hom-ant-pre-Lie1})}{=}
4[x,y]\circ \alpha(z)
-6(-1)^{|z|(|x|+|y|)}\alpha(z)\circ [x,y]\\
&&\hspace{0.4cm}
+2(-1)^{|x|(|y|+|z|)}[y,z]\circ \alpha(x)
+2(-1)^{|z|(|x|+|y|)}[z,x]\circ \alpha(y)\\
&&\hspace{0.4cm}
-2(-1)^{|y||z|}(z\circ y)\circ \alpha(x)
+2(-1)^{|z||x|}(z\circ x)\circ \alpha(y)\\
&&\hspace{0.4cm}
+2\alpha(x)\circ (z\circ y)
-2(-1)^{|x||y|}\alpha(y)\circ (z\circ x)\\
&&\overset{(\ref{cond-equiv1})}{=}
4[x,y]\circ \alpha(z)
-6(-1)^{|z|(|x|+|y|)}\alpha(z)\circ [x,y]\\
&&\hspace{0.4cm}
+2(-1)^{|x|(|y|+|z|)}[y,z]\circ \alpha(x)
+2(-1)^{|z|(|x|+|y|)}[z,x]\circ \alpha(y)\\
&&\hspace{0.4cm}
-2(-1)^{|y||z|}(z\circ y)\circ \alpha(x)
+2(-1)^{|z||x|}(z\circ x)\circ \alpha(y)\\
&&\hspace{0.4cm}
+2(-1)^{|x||y|}[y,x]\circ \alpha(z)
-2(-1)^{|z|(|x|+|y|)+|x||y|}\alpha(z)\circ [y,x]\\
&&\overset{(\ref{cond-Hom-ant-pre-Lie2})}{=}
-4(-1)^{|z|(|x|+|y|)}\alpha(z)\circ [x,y]\\
&&\hspace{0.4cm}
+2(-1)^{|z||x|}(z\circ x)\circ \alpha(y)
-2(-1)^{|y||z|}(z\circ y)\circ \alpha(x).
\end{eqnarray*}
Thus $(\mathcal A,\star,\alpha)$ is a Hom-pre-Lie superalgebra if and only if
$(\mathcal A,\circ,\alpha)$ is an admissible Hom-Novikov superalgebra. Moreover, if $(\mathcal A,\circ,\alpha)$ is an admissible Hom-Novikov
superalgebra, then for any homogeneous elements
$x,y,z\in \mathcal{H}(\mathcal A)$, we have
\begin{eqnarray*}
&&(x\star y)\star \alpha(z)
-(-1)^{|y||z|}(x\star z)\star \alpha(y)\\
&&\overset{(\ref{eq:pro:admissible Novikov algebras and Novikov algebras2})}{=}
(x\circ y)\circ \alpha(z)
-2(-1)^{|z|(|x|+|y|)}\alpha(z)\circ (x\circ y)\\
&&\hspace{0.4cm}
-2(-1)^{|x||y|}(y\circ x)\circ \alpha(z)
+4(-1)^{|x||y|+|z|(|x|+|y|)}\alpha(z)\circ (y\circ x)\\
&&\hspace{0.4cm}
-(-1)^{|y||z|}(x\circ z)\circ \alpha(y)
+2(-1)^{|x||z|+|y|(|x|+|z|)}\alpha(y)\circ (x\circ z)\\
&&\hspace{0.4cm}
+2(-1)^{|y||z|}(z\circ x)\circ \alpha(y)
-4(-1)^{|y||z|+|x||z|}\alpha(y)\circ (z\circ x)\\
&&\overset{(\ref{comp-Lie-struc})}{=}
[x,y]\circ \alpha(z)
-(-1)^{|x||y|}(y\circ x)\circ \alpha(z)\\
&&\hspace{0.4cm}
-2(-1)^{|z|(|x|+|y|)}\alpha(z)\circ [x,y]
-(-1)^{|y||z|}[x,z]\circ \alpha(y)\\
&&\hspace{0.4cm}
+(-1)^{|y||z|+|x||z|}(z\circ x)\circ \alpha(y)
+2(-1)^{|x|(|y|+|z|)}\alpha(y)\circ [x,z]\\
&&\hspace{0.4cm}
-2(-1)^{|x||z|+|y|(|x|+|z|)}\alpha(y)\circ (z\circ x)\\
&&\overset{(\ref{cond-Hom-ant-pre-Lie1})}{=}
[x,y]\circ \alpha(z)
+(-1)^{|z|(|x|+|y|)}[z,x]\circ \alpha(y)\\
&&\hspace{0.4cm}
+2(-1)^{|x|(|y|+|z|)}[y,z]\circ \alpha(x)
+2(-1)^{|z|(|x|+|y|)}\alpha(z)\circ [y,x]\\
&&\hspace{0.4cm}
+2(-1)^{|x|(|y|+|z|)}\alpha(y)\circ [x,z]\\
&&\hspace{0.4cm}
-(-1)^{|x||y|}(y\circ x)\circ \alpha(z)
+(-1)^{|y||z|+|x||z|}(z\circ x)\circ \alpha(y)\\
&&\overset{(\ref{cond-Hom-ant-pre-Lie2}),
(\ref{cond-equiv1})}{=}
(-1)^{|y||z|}[x,z]\circ \alpha(y)
+(-1)^{|x||y|}[y,x]\circ \alpha(z)\\
&&\hspace{0.4cm}
+2\alpha(x)\circ [y,z]
-(-1)^{|x||y|}(y\circ x)\circ \alpha(z)\\
&&\hspace{0.4cm}
+(-1)^{|y||z|+|x||z|}(z\circ x)\circ \alpha(y)\\
&&\overset{(\ref{comp-Lie-struc})}{=}
2\alpha(x)\circ [y,z]
+(-1)^{|y||z|}(x\circ z)\circ \alpha(y)
-(x\circ y)\circ \alpha(z)\\
&&\overset{(\ref{admiss-Hom-Nov-sup2})}{=}0.
\end{eqnarray*}
Thus $(\mathcal A,\star,\alpha)$ is a Hom-Novikov superalgebra.

\end{proof}

\begin{rem}
    If we only consider the categories of Hom-Novikov superalgebras and
admissible Hom-Novikov superalgebras, then the fact that
$(\mathcal A,\star,\alpha)$ is a Hom-Novikov superalgebra if and only if its
$2$-Hom-superalgebra $(\mathcal A,\circ,\alpha)$ is an admissible
Hom-Novikov superalgebra implies the ``converse'' side that
$(\mathcal A,\circ,\alpha)$ is an admissible Hom-Novikov superalgebra if and
only if its $(-2)$-Hom-superalgebra $(\mathcal A,\star,\alpha)$ is a
Hom-Novikov superalgebra. However, since the above correspondence is
placed into a broader framework involving Hom-pre-Lie
superalgebras and Hom-anti-pre-Lie superalgebras, we would like to
point out that Proposition~\ref{pro:admissible Novikov algebras and Novikov
algebras} cannot be obtained directly from
Proposition~\ref{pro:anti-pre-Lie algebras from Novikov algebras}.
\end{rem}

Propositions \ref{pro:anti-pre-Lie algebras from Novikov algebras}
and \ref{pro:admissible Novikov algebras and Novikov algebras}
yield the following relationship.

$$\mbox{\{Hom-pre-Lie\} $\hookleftarrow$ \{Hom-Novikov\}
$\overset{2-Hom-superalgebra}{\underset{(-2)-Hom-superalgebra}{\rightleftharpoons}}$
\{admissible Hom-Novikov\} $\hookrightarrow$ \{Hom-anti-pre-Lie\}.}$$

\begin{exa}
Any commutative Hom-associative superalgebra is both a Hom-Novikov superalgebra and
an admissible Hom-Novikov superalgebra.
\end{exa}

The correspondence between Novikov algebras and admissible Novikov
algebras induces the following correspondence on symmetric
bilinear forms.

\begin{prop}
Let $(\mathcal A,\star,\alpha)$ be a Hom-Novikov superalgebra and
$\mathfrak B$ be an even supersymmetric bilinear form on $\mathcal A$.
Assume that $(\mathcal A,\circ,\alpha)$ is the corresponding admissible
Hom-Novikov superalgebra, that is, the Hom-$2$-superalgebra of
$(\mathcal A,\star,\alpha)$.

Then $\mathfrak B$ is invariant on
$(\mathcal A,\circ,\alpha)$ if and only if the following condition is satisfied:
\begin{equation}\label{eq:Hom-super-bilinear-Novikov-admissible}
\mathfrak B(x\star y,\alpha(z))
=
-(-1)^{|x||y|}
\mathfrak B\Bigl(
\alpha(y),\,
x\star z
+
(-1)^{|x||z|}z\star x
\Bigr),
\end{equation}
for all homogeneous elements
$x,y,z\in \mathcal{H}(A)$.    
\end{prop}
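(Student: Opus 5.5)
The plan is to translate the invariance condition for $\circ$ into a condition on $\star$ by direct substitution, and then to show that the resulting family of identities is equivalent to \eqref{eq:Hom-super-bilinear-Novikov-admissible}. Both directions will use the inverse change of variables \eqref{eq:pro:admissible Novikov algebras and Novikov algebras1}.

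I take ``$\mathfrak B$ is invariant on $(\mathcal A,\circ,\alpha)$'' in the sense of the anti-pre-Lie/admissible Novikov setting:
\[
\Psi(x,y,z):=\mathfrak B(x\circ y,\alpha(z))+(-1)^{|x||y|}\mathfrak B(\alpha(y),x\circ z)=0 .
\]
Denote the defect of \eqref{eq:Hom-super-bilinear-Novikov-admissible} by
\[
\Phi(x,y,z):=\mathfrak B(x\star y,\alpha(z))+(-1)^{|x||y|}\mathfrak B\bigl(\alpha(y),x\star z+(-1)^{|x||z|}z\star x\bigr).
\]
The statement then says that $\Psi\equiv 0$ if and only if $\Phi\equiv 0$.

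\textbf{Step 1: expand $\Psi$ in terms of $\star$.} Substitute \eqref{eq:pro:H-anti-pre-Lie superalgebras from H-Novikov superalgebras} into $\Psi$. This gives
\[
\mathfrak B(x\star y,\alpha z)+2(-1)^{|x||y|}\mathfrak B(y\star x,\alpha z)+(-1)^{|x||y|}\mathfrak B(\alpha y,x\star z)+2(-1)^{|x||y|+|x||z|}\mathfrak B(\alpha y,z\star x).
\]
Subtracting $\Phi(x,y,z)$ leaves
\[
2(-1)^{|x||y|}\mathfrak B(y\star x,\alpha z)+(-1)^{|x||y|+|x||z|}\mathfrak B(\alpha y,z\star x).
\]
Using the supersymmetry of $\mathfrak B$ together with the evenness of $\alpha$ and of $\star$, I will move every $\alpha$-argument into the second slot. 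The remainder then becomes a combination of terms of the shapes appearing in $\Phi(y,x,z)$, $\Phi(z,x,y)$ and so on.

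\textbf{Step 2: invert the system.} The aim is to express $\Psi(x,y,z)$ as an explicit linear combination, with sign factors, of $\Phi(\sigma(x,y,z))$ for permutations $\sigma$. Conversely, by inverting that system, or more simply by redoing Step~1 with \eqref{eq:pro:admissible Novikov algebras and Novikov algebras1}, i.e.\ $x\star y=-\frac13x\circ y+\frac23(-1)^{|x||y|}y\circ x$, the aim is to express $\Phi$ as a combination of permuted $\Psi$'s. Once both expressions are in hand, $\Phi\equiv0$ and $\Psi\equiv0$ are equivalent. The characteristic-zero assumption is needed here to divide by $3$.

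It may turn out that Step~2 only closes after using the Hom-Novikov identities \eqref{HPL} and \eqref{eq:defi:Hom-Novikov superalgebras1} rather than supersymmetry alone. In that case I would first symmetrize $\Psi$ in $(y,z)$ and antisymmetrize it in $(y,z)$, treating the two parts separately. Only the $\mathfrak B(\alpha(\cdot),\cdot\star\cdot)$ terms would be rewritten via supersymmetry, and the trilinear $\star\star$ terms would not appear, since $\mathfrak B$ is merely bilinear.

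\textbf{Main obstacle.} The hard part is finding the correct linear combination in Step~2 and tracking the Koszul signs $(-1)^{|x||y|}$, $(-1)^{|x||z|}$, etc., through the supersymmetry moves. My first attempt would be to evaluate everything on parity-homogeneous triples, check the purely even case against the classical Liu--Bai computation, and then insert the super signs uniformly.
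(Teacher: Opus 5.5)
\textbf{There is a genuine gap, and it is in the definition you start from.} Your $\Psi$ encodes $\mathfrak B(x\circ y,\alpha(z))=-(-1)^{|x||y|}\mathfrak B(\alpha(y),x\circ z)$. That is not the invariance notion for (Hom-)anti-pre-Lie superalgebras used here. The paper's proof uses Eq.~\eqref{inv-Hom-anti-pre-Lie},
\[
\mathfrak B(x\circ y,\alpha(z))=(-1)^{|x||y|}\mathfrak B(\alpha(y),[x,z]_\circ),
\qquad
[x,z]_\circ=x\circ z-(-1)^{|x||z|}z\circ x,
\]
so the second slot carries the commutator, not $x\circ z$. With your $\Psi$, the equivalence you set out to prove is false, so Step 2 cannot close. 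As you observed yourself, the Novikov identities cannot rescue it, since only single products occur.

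To see the failure, take trivial grading and $\alpha=\mathrm{id}$, and write $F(a,b,c)=\mathfrak B(a\star b,c)$. Then
\[
\Psi(x,y,z)=F(x,y,z)+2F(y,x,z)+F(x,z,y)+2F(z,x,y),\qquad
\Phi(x,y,z)=F(x,y,z)+F(x,z,y)+F(z,x,y).
\]
For alternating $F$ the first vanishes identically, while the second equals $F(x,y,z)$. A concrete instance:
\begin{itemize}
\item[(i)] Let $\mathcal A=\langle e_1,e_2,e_3\rangle\oplus\langle f_1,f_2,f_3\rangle$ with $e_i\star e_j=\sum_k\epsilon_{ijk}f_k$ and all other products zero. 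All iterated products vanish, so this is a Novikov algebra.
\item[(ii)] Let $\mathfrak B(e_i,f_j)=\mathfrak B(f_j,e_i)=\delta_{ij}$, with all other pairings zero.
\item[(iii)] Then $\Psi\equiv 0$ but $\Phi(e_1,e_2,e_3)=1$.
\end{itemize}
With the correct definition, the invariance defect at $(e_1,e_2,e_3)$ is $-3$, consistent with the statement.

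\textbf{The fix is to work with \eqref{inv-Hom-anti-pre-Lie}.} The paper's route is as follows.
\begin{itemize}
\item[(i)] Substitute $x\star y=-\frac13 x\circ y+\frac23(-1)^{|x||y|}y\circ x$ and apply invariance to both terms.
\item[(ii)] Use the super-commutative $2$-cocycle identity \eqref{sup-comm-2-cocyc} to write $3\mathfrak B(x\star y,\alpha(z))$ as an expression that is supersymmetric in $x$ and $z$. This gives $\mathfrak B(x\star y,\alpha(z))=(-1)^{|x|(|y|+|z|)}\mathfrak B(z\star y,\alpha(x))$.
\item[(iii)] Expand the brackets in terms of $\star$ and use this symmetry to obtain \eqref{eq:Hom-super-bilinear-Novikov-admissible}.
\item[(iv)] Conversely, the condition yields the same symmetry, and invariance then follows by direct expansion.
\end{itemize}
Your ``invert the system'' strategy is sound once $\Psi$ is replaced by the correct defect $\mathfrak B(x\circ y,\alpha(z))-(-1)^{|x||y|}\mathfrak B(\alpha(y),[x,z]_\circ)$. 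In the classical case this defect and $\Phi$ generate the same left ideal of $\mathbb K[S_3]$ (characteristic $\neq 3$). Hence each is a linear combination of permuted copies of the other, which would even avoid a separate appeal to the $2$-cocycle identity.
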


\begin{proof}
Let $x,y,z\in \mathcal H(\mathcal A)$. Suppose that $\mathfrak B$ is invariant on
$(\mathcal A,\circ,\alpha)$. Then we have
\begin{eqnarray*}
3\mathfrak B(x\star y,\alpha(z)) &\overset{(\ref{eq:pro:admissible Novikov algebras and Novikov algebras1})}{=}&
-\mathfrak B(x\circ y,\alpha(z))
+2(-1)^{|x||y|}\mathfrak B(y\circ x,\alpha(z)) \\
&\overset{(\ref{inv-Hom-anti-pre-Lie})}{=}&
-(-1)^{|x||y|}\mathfrak B(\alpha(y),[x,z])
+2\mathfrak B(\alpha(x),[y,z]) \\
&\overset{(\ref{sup-comm-2-cocyc})}{=}&
\mathfrak B(\alpha(x),[y,z])
+(-1)^{|z|(|x|+|y|)}\mathfrak B(\alpha(z),[y,x]).
\end{eqnarray*}
Due to the supersymmetry of $x$ and $z$ in the above equation, we have
\begin{equation}\label{eq:Hom-super-bilinear}
\mathfrak B(x\star y,\alpha(z))
=(-1)^{|x|(|y|+|z|)}
\mathfrak B(z\star y,\alpha(x)),
\quad \forall x,y,z\in \mathcal H(\mathcal A).
\end{equation}

Moreover, we have
\begin{eqnarray*}
3\mathfrak B(x\star y,\alpha(z)) 
&=&
\mathfrak B(\alpha(x),[y,z])
+(-1)^{|z|(|x|+|y|)}\mathfrak B(\alpha(z),[y,x]) \\
&=&
\mathfrak B(\alpha(x),y\star z)
-(-1)^{|y||z|}\mathfrak B(\alpha(x),z\star y)
+(-1)^{|z|(|x|+|y|)}
\mathfrak B(\alpha(z),y\star x)
 \\
&&\quad-(-1)^{|z|(|x|+|y|)}
\mathfrak B(\alpha(z),x\star y) \\
&\overset{(\ref{eq:Hom-super-bilinear})}{=}&
2(-1)^{|z|(|x|+|y|)}\mathfrak B(\alpha(z),x\star y)
-(-1)^{|x||y|}
\mathfrak B(\alpha(y),x\star z) \\
&&\quad
-(-1)^{|x|(|y|+|z|)}
\mathfrak B(\alpha(y),z\star x).
\end{eqnarray*}
Hence Eq.~(\ref{eq:Hom-super-bilinear-Novikov-admissible}) holds.

Conversely, if $\mathfrak B$ satisfies
Eq.~(\ref{eq:Hom-super-bilinear-Novikov-admissible}) on the
Hom-Novikov superalgebra $(\mathcal A,\star,\alpha)$, then
Eq.~(\ref{eq:Hom-super-bilinear}) holds and
\begin{eqnarray*}
\mathfrak B(x\circ y,\alpha(z)) 
&\overset{(\ref{eq:pro:H-anti-pre-Lie superalgebras from H-Novikov superalgebras})}{=}&
\mathfrak B(x\star y,\alpha(z))
+2(-1)^{|x||y|}
\mathfrak B(y\star x,\alpha(z)) \\
&=&
(-1)^{|x||y|}
\mathfrak B(y\star x,\alpha(z))
+(-1)^{|z|(|x|+|y|)}\mathfrak B(\alpha(z),x\star y)
\\&+&(-1)^{|z|(|x|+|y|)}
\mathfrak B(\alpha(z),y\star x) \\
&\overset{(\ref{eq:Hom-super-bilinear-Novikov-admissible}),
(\ref{eq:Hom-super-bilinear})}{=}&
(-1)^{|z|(|x|+|y|)}\mathfrak B(z\star x,\alpha(y))
-(-1)^{|y||z|}
\mathfrak B(x\star z,\alpha(y)) \\
&=&
(-1)^{|z||y|}\mathfrak B([x,z],\alpha(y)).
\end{eqnarray*}

Thus $\mathfrak B$ is invariant on $(\mathcal A,\circ,\alpha)$.
\end{proof}

The correspondence between Hom-Novikov superalgebras and
infinite-dimensional Hom-Lie superalgebras gives a correspondence between admissible Hom-Novikov superalgebras and
infinite-dimensional Hom-Lie superalgebras.

\begin{prop} \label{lem:BN}
Let $\mathcal A$ be a $\mathbb Z_2$-graded vector space with a
bilinear operation $\star:\mathcal A\times \mathcal A\rightarrow \mathcal A$ and $\alpha:\mathcal A\to\mathcal A$ be an even linear map. Set $\hat
{\mathcal A}=\mathcal A\otimes {\mathbb K}[t,t^{-1}]$. Define a bilinear operation
$[\cdot,\cdot]:\hat{\mathcal A}\otimes \hat {\mathcal A}\rightarrow \hat{\mathcal A}$ by
\begin{equation}
[x\otimes t^{m+1}, y\otimes t^{n+1}]=((m+1)x\star y-(-1)^{|x||y|}(n+1)y\star
x)\otimes t^{m+n+1},\;\;\forall x,y\in \mathcal A, m,n\in {\mathbb Z}.
\end{equation}
Then $(\hat{\mathcal A},[\cdot,\cdot],\alpha)$ is a Hom-Lie superalgebra if and only if $(\mathcal A,\star,\alpha)$
is a Hom-Novikov superalgebra.
\end{prop}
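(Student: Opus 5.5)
We prove this by direct computation on homogeneous elements $X=x\otimes t^{m+1}$, $Y=y\otimes t^{n+1}$, $Z=z\otimes t^{l+1}$. We extend $\alpha$ to $\hat{\mathcal A}$ by $\alpha(x\otimes t^k)=\alpha(x)\otimes t^k$; this is the natural reading of the statement. The grading on $\hat{\mathcal A}$ is inherited from $\mathcal A$, so $|X|=|x|$.

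The first step is super-skew-symmetry. Swapping $X$ and $Y$ in the defining formula shows that $[Y,X]=-(-1)^{|x||y|}[X,Y]$ identically, with no assumption on $\star$. So the whole content is the Hom super-Jacobi identity \eqref{H-sJ}.

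The second step is to expand the Jacobi expression. We write
\[
J(X,Y,Z)=[\alpha(X),[Y,Z]]-[[X,Y],\alpha(Z)]-(-1)^{|x||y|}[\alpha(Y),[X,Z]].
\]
Each term is a sum of products of the form $(a\star b)\star\alpha(c)$ or $\alpha(a)\star(b\star c)$, tensored with $t^{m+n+l+1}$. The coefficients are polynomials in $m+1$, $n+1$, $l+1$ that are quadratic. Note that the inner bracket $[Y,Z]$ lives in degree $t^{(n+l+1)+1-1}$, so its "index" in the outer bracket is $n+l+1$. This gives coefficients such as $(m+1)(n+l+2)$ or $(n+l+2)(n+1)$, and so on.

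The third step is to collect the coefficients of independent monomials. We use the substitution $a=m+1$, $b=n+1$, $c=l+1$, so that $n+l+2=b+c$. Then $J$ becomes a polynomial in $a,b,c$ with coefficients in $\mathcal A$. Since $m,n,l$ range over $\mathbb Z$ and the field has characteristic $0$, $J=0$ for all $m,n,l$ if and only if each coefficient vanishes. There are coefficients of $a^2,b^2,c^2,ab,bc,ca$, and by symmetry it suffices to examine a few of them.

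The expectation is as follows. The coefficient of, say, $ab$ should reduce to the Hom-pre-Lie identity \eqref{HPL}, namely the associator difference $(x,y,z)_\alpha-(-1)^{|x||y|}(y,x,z)_\alpha$. The coefficient of $a^2$, or of $c^2$, should give the right-commutativity \eqref{eq:defi:Hom-Novikov superalgebras1}, for instance $(z\star x)\star\alpha(y)-(-1)^{|x||y|}(z\star y)\star\alpha(x)$ up to sign. The remaining coefficients should then be combinations of these two identities, possibly after permuting variables. This gives both directions at once. If $\star$ is Hom-Novikov, all coefficients vanish. Conversely, if $J\equiv0$, reading off the $ab$- and $c^2$-coefficients yields \eqref{HPL} and \eqref{eq:defi:Hom-Novikov superalgebras1}.

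The main obstacle is the bookkeeping of Koszul signs and the verification that the mixed coefficients are genuinely consequences of the two identities. Some coefficients may produce a combination such as \eqref{HPL} plus a permuted instance of right-commutativity, and I would isolate the pure ones first. A practical shortcut is to mirror the classical Balinskii–Novikov computation, which is the case $\alpha=\mathrm{id}$ without signs. Replacing each outermost factor by its $\alpha$-image is forced by the placement of $\alpha$ in \eqref{H-sJ}, and the signs follow the Koszul rule because the $t$-powers are even. No multiplicativity of $\alpha$ is needed, since $\alpha$ only appears on the outer arguments.
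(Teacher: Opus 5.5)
The paper states this proposition without proof (it is the Hom-super analogue of the classical Balinskii--Novikov lemma), so there is no argument of the paper's to compare against. Your plan is the natural route and it does work. You extend $\alpha$ by $\alpha\otimes\mathrm{id}$, observe that super-skew-symmetry is automatic, and compare coefficients of the Hom super-Jacobiator as a polynomial in $a=m+1$, $b=n+1$, $c=l+1$. Carried out, this needs no multiplicativity of $\alpha$, as you say.

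There is, however, a concrete error in your bookkeeping. Since $[Y,Z]\in\mathcal A\otimes t^{n+l+1}$ and $[u\otimes t^{p},v\otimes t^{q}]=(p\,u\star v-(-1)^{|u||v|}q\,v\star u)\otimes t^{p+q-1}$, the coefficient attached to $[Y,Z]$ in the outer bracket is $n+l+1=b+c-1$, not $n+l+2=b+c$. You state the index correctly and then use the wrong one. Thus
\[
[\alpha(X),[Y,Z]]=\bigl(a\,\alpha(x)\star w-(-1)^{|x|(|y|+|z|)}(b+c-1)\,w\star\alpha(x)\bigr)\otimes t^{m+n+l+1},\qquad w=b\,y\star z-(-1)^{|y||z|}c\,z\star y,
\]
and similarly $a+b-1$ and $a+c-1$ appear in the other two terms.

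Consequently $J$ is not homogeneous quadratic. Besides the six coefficients of $a^2,\dots,ca$ there are three linear ones. The coefficient of $a$ is $(x\star y)\star\alpha(z)-(-1)^{|y||z|}(x\star z)\star\alpha(y)$, and those of $b$ and $c$ are, up to sign, the same expression with the variables permuted. These are again instances of \eqref{eq:defi:Hom-Novikov superalgebras1}, so the conclusion survives. But for the direction Hom-Novikov $\Rightarrow$ Hom-Lie you must check them, and as written your list of coefficients is incomplete.

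Your worry about mixed coefficients is unfounded. Each of the nine coefficients is exactly one permuted instance of a single identity: $a^2,b^2,c^2,a,b,c$ give right-commutativity, and $ab,bc,ca$ give \eqref{HPL}. For example, the $ab$-coefficient is $-\bigl((x\star y)\star\alpha(z)-\alpha(x)\star(y\star z)\bigr)+(-1)^{|x||y|}\bigl((y\star x)\star\alpha(z)-\alpha(y)\star(x\star z)\bigr)$. So both directions follow at once from reading off these coefficients.
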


Combining Propositions~\ref{pro:admissible Novikov algebras and
Novikov algebras} and \ref{lem:BN} together, we have the following
conclusion.

\begin{cor}\label{cor:BN-A}
Let $\mathcal A$ be a $\mathbb Z_2$-graded vector space with a bilinear operation
$\circ:\mathcal A\times\mathcal A\rightarrow \mathcal A$ and $\alpha:\mathcal A\to\mathcal A$ be an even linear map. Set $\hat{\mathcal A}=\mathcal A\otimes {\mathbb
K}[t,t^{-1}]$. Define a bilinear operation $[\cdot,\cdot]:\hat{\mathcal A}\times \hat {\mathcal A}\rightarrow \hat{\mathcal A}$ by
\begin{equation}
[x\otimes t^{m+1}, y\otimes t^{n+1}]=((m+2n+3)x\circ
y-(-1)^{|x||y|}(2m+n+3)y\circ x)\otimes t^{m+n+1},\;\;\forall x,y\in\mathcal H(\mathcal A), m,n\in
{\mathbb Z}.
\end{equation}
Then $(\hat{\mathcal A},[\cdot,\cdot],\alpha)$ is a Hom-Lie superalgebra if and only if $(\mathcal A,\star,\alpha)$
is an admissible Hom-Novikov superalgebra.
\end{cor}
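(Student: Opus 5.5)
We plan to reduce Corollary \ref{cor:BN-A} to Proposition \ref{lem:BN} through the $(-2)$-Hom-superalgebra correspondence of Proposition \ref{pro:admissible Novikov algebras and Novikov algebras}. (In the statement, the condition on $(\mathcal A,\star,\alpha)$ should read as a condition on $(\mathcal A,\circ,\alpha)$.) Given $\circ$, define $\star$ by Eq.~\eqref{eq:pro:admissible Novikov algebras and Novikov algebras2}, that is $x\star y=x\circ y-2(-1)^{|x||y|}y\circ x$. Conversely, $\circ$ is recovered from $\star$ by Eq.~\eqref{eq:pro:H-anti-pre-Lie superalgebras from H-Novikov superalgebras}, up to the normalization discussed below. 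The first step is to rewrite the bracket of Proposition \ref{lem:BN} for this $\star$ in terms of $\circ$:
\begin{align*}
&(m+1)x\star y-(-1)^{|x||y|}(n+1)y\star x\\
&=(m+1)x\circ y-2(m+1)(-1)^{|x||y|}y\circ x-(-1)^{|x||y|}(n+1)y\circ x+2(n+1)x\circ y\\
&=(m+2n+3)x\circ y-(-1)^{|x||y|}(2m+n+3)y\circ x .
\end{align*}
So the bracket in the Corollary is exactly the bracket of Proposition \ref{lem:BN} built from $\star$. By that proposition, $\hat{\mathcal A}$ is a Hom-Lie superalgebra if and only if $(\mathcal A,\star,\alpha)$ is Hom-Novikov.

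The second step is to show that $(\mathcal A,\star,\alpha)$ is Hom-Novikov if and only if $(\mathcal A,\circ,\alpha)$ is admissible Hom-Novikov.
\begin{itemize}
\item If $(\mathcal A,\circ,\alpha)$ is admissible Hom-Novikov, it is Hom-anti-pre-Lie by Proposition \ref{H-admissible is H-anti-pre-Lie}. Proposition \ref{pro:admissible Novikov algebras and Novikov algebras} then shows that $\star$ is Hom-Novikov.
\item Conversely, suppose $\star$ is Hom-Novikov. Note that $q=-2$ and $q=2$ are not inverse to each other: the $2$-superalgebra of $\star$ is $-3\circ$, since $x\star y+2(-1)^{|x||y|}y\star x=-3x\circ y$. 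This is consistent with Eq.~\eqref{eq:pro:admissible Novikov algebras and Novikov algebras1}. By Proposition \ref{pro:anti-pre-Lie algebras from Novikov algebras}, $-3\circ$ is an admissible Hom-Novikov superalgebra.
\end{itemize}

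The remaining point, which we expect to be the main obstacle, is the scaling step. Both defining identities \eqref{cond-Hom-ant-pre-Lie1} and \eqref{admiss-Hom-Nov-sup2} are homogeneous quadratic in the product. Multiplying the product by the nonzero scalar $-3$ therefore multiplies each side by $9$, and $\circ$ is admissible Hom-Novikov if and only if $-3\circ$ is; this uses characteristic $0$, so that $3\neq 0$. One must also check that $\alpha$ is handled identically in both settings. Both brackets act on $\hat{\mathcal A}$ through the same even map $\alpha$, extended as $\alpha\otimes\mathrm{id}$, so no further compatibility is needed. Chaining the two equivalences gives the corollary.
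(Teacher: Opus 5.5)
Your proposal is correct and takes essentially the paper's route: the paper proves the corollary by combining Proposition~\ref{pro:admissible Novikov algebras and Novikov algebras} with Proposition~\ref{lem:BN}, through exactly your identity $(m+1)x\star y-(-1)^{|x||y|}(n+1)y\star x=(m+2n+3)x\circ y-(-1)^{|x||y|}(2m+n+3)y\circ x$ for the $(-2)$-Hom-superalgebra $\star$. Your converse direction fills in a step the paper leaves implicit in its remark on the ``converse side'' and in its adjustment from Eq.~\eqref{eq:pro:admissible Novikov algebras and Novikov algebras1} to Eq.~\eqref{eq:pro:admissible Novikov algebras and Novikov algebras2}: the $2$-superalgebra of $\star$ is $-3\circ$, and the defining identities are quadratic in the product, so rescaling by $-3$ is harmless.
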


Next we interpret admissible Hom-Novikov superalgebras in terms of
anti-super-$\mathcal O$-operators.


\begin{df}\label{defi:admissible anti O-operators}
Let $(\mathcal A,[\cdot,\cdot],\alpha)$ be a Hom-Lie superalgebra and  
$(V,\rho,\beta)$ be a representation of $(\mathcal A,[\cdot,\cdot],\alpha)$.  
Assume that $T:V\rightarrow \mathcal A$ is an  anti-super-$\mathcal O$-operator associated to the representation $(V,\rho,\beta)$. Then $T$ is called \textbf{admissible} if
\begin{equation}\label{eq:Hom-super-admissible-anti-O-operator}
\begin{aligned}
2\,\rho(T(\beta(u)))\rho(T(v))w
-2(-1)^{|v||w|}
\rho(T(\beta(u)))\rho(T(w))v
&=\rho\Big(T(\rho(T(u))v)\Big)\beta(w)
\\&-(-1)^{|v||w|}
\rho\Big(T(\rho(T(u))w)\Big)\beta(v),
\end{aligned}
\end{equation}
for all homogeneous $u,v,w\in V$.

In particular, an anti-Rota-Baxter operator
$\mathcal R:\mathcal A\rightarrow\mathcal A$ on the Hom-Lie superalgebra
$(\mathcal A,[\cdot,\cdot],\alpha)$ is called \textbf{admissible} if
\begin{equation}\label{eq:Hom-super-admissible-anti-RB}
\begin{aligned}
2[\mathcal R(\alpha(x)),[\mathcal R(y),z]]
-2(-1)^{|y||z|}[\mathcal R(\alpha(x)),[\mathcal R(z),y]]
&=[\mathcal R([\mathcal R(x),y]),\alpha(z)]
\\&-(-1)^{|y||z|}
[\mathcal R([\mathcal R(x),z]),\alpha(y)],
\end{aligned}
\end{equation}
for all homogeneous $x,y,z\in\mathcal A$.    
\end{df}
\begin{prop}\label{pro:admissible operator}
Let $T:V\rightarrow \mathcal A$
be an admissible anti-super-$\mathcal O$-operator on a Hom-Lie superalgebra $(\mathcal A,[\cdot,\cdot],\alpha)$ associated to a representation 
$(V,\rho,\beta)$. Then $T$ is strong.

Moreover,
$(V,\circ,\beta)$ is an admissible Hom-Novikov superalgebra, where the
operation $\circ$ is given by Eq.~(\ref{ant-O-oper-to-anti-pre-Lie}).    
\end{prop}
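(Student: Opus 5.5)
The plan is to translate the admissibility condition \eqref{eq:Hom-super-admissible-anti-O-operator} into an identity for the product $u\circ v=-\rho(T(u))v$ on $V$, and then argue as in Proposition~\ref{H-admissible is H-anti-pre-Lie}.

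\textbf{Step 1: rewrite admissibility on $V$.} First I will record the two compatibilities $T\beta=\alpha T$ and $\rho(\alpha(x))\beta=\beta\rho(x)$; the second is part of the definition of a representation. With these, $\beta(u)\circ(v\circ w)=\rho(T(\beta(u)))\rho(T(v))w$. Also $(u\circ v)\circ\beta(w)=\rho\big(T(\rho(T(u))v)\big)\beta(w)$, where the two minus signs cancel. Hence \eqref{eq:Hom-super-admissible-anti-O-operator} reads
\[
2\beta(u)\circ\big(v\circ w-(-1)^{|v||w|}w\circ v\big)=(u\circ v)\circ\beta(w)-(-1)^{|v||w|}(u\circ w)\circ\beta(v).
\]
This is exactly \eqref{admiss-Hom-Nov-sup2} for $(V,\circ,\beta)$ with the bracket $[\cdot,\cdot]_\circ$. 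By Theorem~\ref{Hom-ant-pre-Lie-by-ant-O-oper}, $(V,\circ,\beta)$ already satisfies \eqref{cond-Hom-ant-pre-Lie1}. Therefore $(V,\circ,\beta)$ satisfies both defining identities of Definition~\ref{defi:admissible Hom-Novikov superalgebras}.

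\textbf{Step 2: conclude.} Proposition~\ref{H-admissible is H-anti-pre-Lie} shows that an admissible Hom-Novikov superalgebra is a Hom-anti-pre-Lie superalgebra. Its proof uses only \eqref{cond-Hom-ant-pre-Lie1}, \eqref{admiss-Hom-Nov-sup2} and the equivalence of \eqref{cond-Hom-ant-pre-Lie2} with \eqref{cond-equiv1} in Proposition~\ref{equiv-H-ant--pre-Lie-Hom-Lie admiss}, so it applies verbatim to $(V,\circ,\beta)$. Thus $(V,\circ,\beta)$ is a Hom-anti-pre-Lie superalgebra, and in particular Hom-Lie-admissible. Theorem~\ref{Hom-ant-pre-Lie-by-ant-O-oper} states that $(V,\circ,\beta)$ is Hom-anti-pre-Lie if and only if $T$ is strong. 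Hence $T$ is strong, and $(V,\circ,\beta)$ is an admissible Hom-Novikov superalgebra.

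\textbf{Main obstacle.} The delicate point is Step~1. One must check that $T(\beta(u))=\alpha(T(u))$ correctly produces the $\beta(u)$ appearing on the left. One must also check that the Koszul signs in \eqref{eq:Hom-super-admissible-anti-O-operator} match those of \eqref{admiss-Hom-Nov-sup2} after substituting $\circ$. Since $T$ and $\rho(\cdot)$ are even, no extra signs arise. Beyond that, the argument is a direct invocation of the earlier results.
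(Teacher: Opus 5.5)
Your proof is correct, but it runs the argument in the opposite order from the paper.

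\textbf{The paper's route.} The paper first proves strongness directly at the operator level. It expands the cyclic sum $S$ in \eqref{cond-strong-O-Oper} using the representation property and the admissibility condition \eqref{eq:Hom-super-admissible-anti-O-operator}. Regrouping via \eqref{cond-ant-O-oper-Hom-Lie2} gives $2S=-S$, hence $S=0$. Only then does it translate admissibility into \eqref{admiss-Hom-Nov-sup2} on $V$, and it invokes the forward direction (strong $\Rightarrow$ Hom-anti-pre-Lie) of Theorem~\ref{Hom-ant-pre-Lie-by-ant-O-oper}.

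\textbf{Your route.} You translate admissibility first; your Step 1 is exactly the paper's second computation. You then note that $(V,\circ,\beta)$ satisfies the definition of an admissible Hom-Novikov superalgebra outright. Proposition~\ref{H-admissible is H-anti-pre-Lie} then gives the Hom-anti-pre-Lie property, and you read off strongness from the converse direction of the theorem.

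\textbf{Comparison.} Both routes rest on the same cancellation. The paper's $2S=-S$ is the image under $T$ of the "twice the cyclic sum equals minus it" argument in the proof of Proposition~\ref{H-admissible is H-anti-pre-Lie}. Your version avoids redoing that computation. The price is that you rely on the ``only if'' half of the cited theorem, whereas the paper only needs the ``if'' half. That half is immediate and worth stating. Since $T([u,v]_\circ)=[T(u),T(v)]$ by \eqref{cond-ant-O-oper-Hom-Lie2}, one has $[u,v]_\circ\circ\beta(w)=-\rho([T(u),T(v)])\beta(w)$. So the left-hand side of \eqref{cond-Hom-ant-pre-Lie2} for $(V,\circ,\beta)$ equals $-(-1)^{|u||w|}$ times the left-hand side of \eqref{cond-strong-O-Oper}.

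\textbf{Minor points.} No ``verbatim'' transfer of the earlier proof is needed, since $(V,\circ,\beta)$ literally is an admissible Hom-Novikov superalgebra and the proposition applies directly. Also, the compatibilities $T\beta=\alpha T$ and $\rho(\alpha(x))\beta=\beta\rho(x)$ play no role in Step 1. Condition \eqref{eq:Hom-super-admissible-anti-O-operator} is already written with $\rho(T(\beta(u)))$, so $\beta(u)\circ(v\circ w)=\rho(T(\beta(u)))\rho(T(v))w$ follows from the definition of $\circ$ alone.
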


\begin{proof}
 Let $u,v,w\in\mathcal H(V)$.
By Eqs.~(\ref{cond-ant-O-oper-Hom-Lie2}) and
(\ref{eq:Hom-super-admissible-anti-O-operator}), we have
\begin{eqnarray*}
&&2\rho([T(u),T(v)])\beta(w)
+2(-1)^{|u|(|v|+|w|)}
\rho([T(v),T(w)])\beta(u)\\
&&\hspace{1cm}
+2(-1)^{|w|(|u|+|v|)}
\rho([T(w),T(u)])\beta(v)\\
&&=
\rho(T(\rho(T(u))v))\beta(w)
-(-1)^{|v||w|}
\rho(T(\rho(T(u))w))\beta(v)\\
&&\hspace{0.4cm}
+(-1)^{|u|(|v|+|w|)}
\rho(T(\rho(T(v))w))\beta(u)\\
&&\hspace{0.4cm}
-(-1)^{|u||v|}
\rho(T(\rho(T(v))u))\beta(w)\\
&&\hspace{0.4cm}
+(-1)^{|w|(|u|+|v|)}
\rho(T(\rho(T(w))u))\beta(v)\\
&&\hspace{0.4cm}
-(-1)^{|u|(|v|+|w|)+|v||w|}
\rho(T(\rho(T(w))v))\beta(u)\\
&&=(-1)^{|u||v|}
\rho([T(v),T(u)])\beta(w)
+(-1)^{|u|(|v|+|w|)+|v||w|}
\rho([T(w),T(v)])\beta(u)\\
&&\hspace{0.4cm}
+(-1)^{|v||w|}
\rho([T(u),T(w)])\beta(v).
\end{eqnarray*}
Hence Eq.~(\ref{cond-strong-O-Oper})
holds and thus $T$ is strong.

Moreover, we have
\begin{eqnarray*}
2\beta(u)\circ[v,w]
&=&
2\rho(T(\beta(u)))\rho(T(v))w
-2(-1)^{|v||w|}
\rho(T(\beta(u)))\rho(T(w))v\\
&=&
\rho(T(\rho(T(u))v))\beta(w)
-(-1)^{|v||w|}
\rho(T(\rho(T(u))w))\beta(v)\\
&=&
(u\circ v)\circ\beta(w)
-(-1)^{|v||w|}
(u\circ w)\circ\beta(v).
\end{eqnarray*}
Then by Theorem \ref{Hom-ant-pre-Lie-by-ant-O-oper},
$(V,\circ,\beta)$ is an admissible Hom-Novikov superalgebra.   
\end{proof}

\begin{cor}\label{H-Lie-To-H-Nov}
Let $(\mathcal A,[\cdot,\cdot],\alpha)$ be a Hom-Lie superalgebra, and
$\mathcal R:\mathcal A\rightarrow \mathcal A$ be an admissible
anti-Rota-Baxter operator on $(\mathcal A,[\cdot,\cdot],\alpha)$. Then $(\mathcal A,\circ,\alpha)$ given by
Eq.~(\ref{ant-R-B-to-anti-pre-Lie}) defines an admissible Hom-Novikov superalgebra.
Conversely, if $\mathcal R:\mathcal A\rightarrow \mathcal A$ is a
linear transformation on a Hom-Lie superalgebra $(\mathcal A,[\cdot,\cdot],\alpha)$ such
that Eq.~(\ref{ant-R-B-to-anti-pre-Lie}) defines an admissible Hom-Novikov superalgebra,
then $\mathcal R$ satisfies Eqs.~(\ref{Cor-R-B-Hom-anti-pre-Lie}) and
(\ref{eq:Hom-super-admissible-anti-RB}).
\end{cor}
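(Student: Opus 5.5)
The forward direction will be obtained by specializing Proposition~\ref{pro:admissible operator} to the adjoint representation. An anti-Rota-Baxter operator $\mathcal R$ is, by Definition~\ref{def-ant-O-op-H-Liesup}, an anti-super-$\mathcal O$-operator on $(\mathcal A,[\cdot,\cdot],\alpha)$ associated to $(\mathcal A,ad,\alpha)$. With $\rho=ad$, $\beta=\alpha$, $T=\mathcal R$, condition \eqref{eq:Hom-super-admissible-anti-O-operator} becomes literally \eqref{eq:Hom-super-admissible-anti-RB}. Indeed, $\rho(T(\beta(u)))\rho(T(v))w=[\mathcal R(\alpha(x)),[\mathcal R(y),z]]$ and $\rho(T(\rho(T(u))v))\beta(w)=[\mathcal R([\mathcal R(x),y]),\alpha(z)]$. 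So an admissible anti-Rota-Baxter operator is an admissible anti-super-$\mathcal O$-operator. Proposition~\ref{pro:admissible operator} then says that $(\mathcal A,\circ,\alpha)$ with $x\circ y=-ad(\mathcal R(x))y=-[\mathcal R(x),y]$ is an admissible Hom-Novikov superalgebra. This product is exactly Eq.~\eqref{ant-R-B-to-anti-pre-Lie}.

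For the converse, assume Eq.~\eqref{ant-R-B-to-anti-pre-Lie} defines an admissible Hom-Novikov superalgebra. By Proposition~\ref{H-admissible is H-anti-pre-Lie} it is a Hom-anti-pre-Lie superalgebra. The converse part of the Corollary following Theorem~\ref{Hom-ant-pre-Lie-by-ant-O-oper} then gives Eq.~\eqref{Cor-R-B-Hom-anti-pre-Lie} (together with \eqref{cond-strong-Rota-Baxter-Oper}). It remains to extract \eqref{eq:Hom-super-admissible-anti-RB} from \eqref{admiss-Hom-Nov-sup2}. First write \eqref{admiss-Hom-Nov-sup2} in terms of $\mathcal R$: the left side is
\[2\alpha(x)\circ[y,z]_\circ=-2[\mathcal R(\alpha(x)),[y,z]_\circ],\qquad [y,z]_\circ=-[\mathcal R(y),z]+(-1)^{|y||z|}[\mathcal R(z),y],\]
so it equals $2[\mathcal R(\alpha(x)),[\mathcal R(y),z]]-2(-1)^{|y||z|}[\mathcal R(\alpha(x)),[\mathcal R(z),y]]$. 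On the right, $(x\circ y)\circ\alpha(z)=-[\mathcal R(-[\mathcal R(x),y]),\alpha(z)]=[\mathcal R([\mathcal R(x),y]),\alpha(z)]$, and similarly for the term with $y,z$ swapped. Substituting these, \eqref{admiss-Hom-Nov-sup2} becomes term by term identical to \eqref{eq:Hom-super-admissible-anti-RB}.

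The only point needing care is the sign bookkeeping in these expansions: the two minus signs in $(x\circ y)\circ\alpha(z)$ cancel, and the parity factor $(-1)^{|y||z|}$ is carried through correctly. No step here is a genuine obstacle, since $\mathcal R$ and $\alpha$ are even and no sign twists arise from moving them.
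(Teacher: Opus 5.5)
Your proposal is correct and follows the paper's proof. The forward direction is exactly Proposition~\ref{pro:admissible operator} specialized to $\rho=ad$, $\beta=\alpha$, $T=\mathcal R$, and the converse comes from unwinding the defining identities of an admissible Hom-Novikov superalgebra for $x\circ y=-[\mathcal R(x),y]$, which the paper simply asserts ``follows from the definition directly''; your explicit expansion of \eqref{admiss-Hom-Nov-sup2} into \eqref{eq:Hom-super-admissible-anti-RB}, and your derivation of \eqref{Cor-R-B-Hom-anti-pre-Lie} via Proposition~\ref{H-admissible is H-anti-pre-Lie} and the earlier anti-Rota-Baxter corollary, just fill in details the paper leaves implicit.
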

\begin{proof}
The first half part follows from Proposition \ref{pro:admissible
operator} by letting $\rho=\mathfrak{ad}$. The second half part
follows from Definition~\ref{defi:admissible Hom-Novikov superalgebras}
directly.
\end{proof}

\begin{exa}
Consider the $3$-dimensional Hom-Lie superalgebra $(\mathcal{A}, [\cdot, \cdot], \alpha)$ introduced in Example~\ref{Ex-H-Lie-sup}. Define an even linear operator $\mathcal{R} \in \mathrm{End}_{\bar{0}}(\mathcal{A})$ on the basis elements $\{e_1, e_2, e_3\}$ by
$$
\mathcal{R}(e_1) = e_1, \quad \mathcal{R}(e_2) = e_3, \quad \text{and} \quad \mathcal{R}(e_3) = 0.
$$
A direct calculation verifies that $\mathcal{R}$ is an admissible anti-Rota-Baxter operator on $\mathcal{A}$. Consequently, applying Corollary~\ref{H-Lie-To-H-Nov} yields an induced admissible Hom-Novikov superalgebra structure $(\mathcal{A}, \circ, \alpha)$, whose multiplication is non-zero on the basis element product
$$
e_1 \circ e_2 = -[\mathcal{R}(e_1), e_2] = -e_3.
$$
\end{exa}

\begin{cor}\label{cor:admissible anti O-operator}
Let $(\mathcal A,[\cdot,\cdot],\alpha)$ be a Hom-Lie superalgebra. Then there is a compatible
admissible Hom-Novikov superalgebra structure on $\mathcal A$ if and only if
there exists an invertible admissible anti-super-$\mathcal{O}$-operator
on $(\mathcal A,[\cdot,\cdot],\alpha)$.
\end{cor}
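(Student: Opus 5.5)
The argument follows the proof of Corollary~\ref{cor-comp-h-anti-pre-Lie-inv-O-op}, with the extra admissibility identity tracked on both sides.

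\emph{Forward direction.} Suppose $(\mathcal A,\circ,\alpha)$ is a compatible admissible Hom-Novikov superalgebra structure, so $[x,y]=x\circ y-(-1)^{|x||y|}y\circ x$. By Proposition~\ref{H-admissible is H-anti-pre-Lie} it is a Hom-anti-pre-Lie superalgebra. By Proposition~\ref{equiv-H-ant--pre-Lie-Hom-Lie admiss}, $(\mathcal A,-\mathfrak L_\circ,\alpha)$ is then a representation of $(\mathcal A,[\cdot,\cdot],\alpha)$. Exactly as in Corollary~\ref{cor-comp-h-anti-pre-Lie-inv-O-op}, $T=\mathrm{id}_{\mathcal A}$ is an invertible anti-super-$\mathcal O$-operator associated to this representation, and $\mathrm{id}\circ\alpha=\alpha\circ\mathrm{id}$ holds trivially.

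It remains to check admissibility~\eqref{eq:Hom-super-admissible-anti-O-operator} with $\rho=-\mathfrak L_\circ$, $T=\mathrm{id}$, $\beta=\alpha$. The two sign factors $(-1)^2$ on the left cancel, so the left side becomes $2\alpha(u)\circ(v\circ w)-2(-1)^{|v||w|}\alpha(u)\circ(w\circ v)=2\alpha(u)\circ[v,w]_\circ$. On the right, $\rho(T(\rho(T(u))v))\beta(w)=-\mathfrak L_\circ(-u\circ v)\alpha(w)=(u\circ v)\circ\alpha(w)$, and likewise for the term with $v,w$ exchanged. So the condition is literally Eq.~\eqref{admiss-Hom-Nov-sup2}, which holds by assumption.

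\emph{Converse.} Let $T:V\to\mathcal A$ be an invertible admissible anti-super-$\mathcal O$-operator associated to $(V,\rho,\beta)$. By Proposition~\ref{pro:admissible operator}, $(V,\circ_V,\beta)$ with $u\circ_V v=-\rho(T(u))v$ is an admissible Hom-Novikov superalgebra. Transport this structure to $\mathcal A$ by $x\circ_{\mathcal A}y=T(T^{-1}(x)\circ_V T^{-1}(y))=-T(\rho(x)T^{-1}(y))$. The twisting map is preserved because $T\beta=\alpha T$, so $\alpha=T\beta T^{-1}$. Hence $T$ is an isomorphism from $(V,\circ_V,\beta)$ onto $(\mathcal A,\circ_{\mathcal A},\alpha)$, and both Eq.~\eqref{cond-Hom-ant-pre-Lie1} and Eq.~\eqref{admiss-Hom-Nov-sup2} transfer. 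To see this, apply $T$ to the identities in $V$, using that $T$ is even and $T(\beta(u))=\alpha(T(u))$.

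Compatibility, namely $x\circ_{\mathcal A}y-(-1)^{|x||y|}y\circ_{\mathcal A}x=[x,y]$, is the same computation as in the proof of Corollary~\ref{cor-comp-h-anti-pre-Lie-inv-O-op}, using Eq.~\eqref{cond-ant-O-oper-Hom-Lie2}.

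The main point needing care is the sign bookkeeping in the forward identification of the admissibility condition with Eq.~\eqref{admiss-Hom-Nov-sup2}. The other delicate point is confirming that $\alpha$, rather than $\beta$, is the correct twisting map after transport; this rests on the intertwining relation~\eqref{cond-ant-O-oper-Hom-Lie1}. Everything else is formal.
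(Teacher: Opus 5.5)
Your proposal is correct and follows the paper's proof. In the forward direction you take $T=\mathrm{id}$ for the representation $(\mathcal A,-\mathfrak L_\circ,\alpha)$, where the admissibility condition is literally Eq.~\eqref{admiss-Hom-Nov-sup2}; in the converse you use the product $x\circ_{\mathcal A}y=-T(\rho(x)T^{-1}(y))$. The only cosmetic difference is that the paper checks Eq.~\eqref{admiss-Hom-Nov-sup2} for $\circ_{\mathcal A}$ by direct computation and cites Corollary~\ref{cor-comp-h-anti-pre-Lie-inv-O-op} for the rest, whereas you obtain it on $V$ from Proposition~\ref{pro:admissible operator} and transport it along $T$, which amounts to the same calculation.
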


\begin{proof}
Let $x,y,z\in\mathcal A$. Suppose that $(\mathcal A,\circ,\alpha)$ is a compatible
admissible Novikov algebra structure on $(\mathcal A,[\cdot,\cdot],\alpha)$. Then by
Eq.~(\ref{admiss-Hom-Nov-sup2}), we have
$$2\mathfrak{L}_{\circ}(\alpha(x))\mathfrak{L}_{\circ}(y)z-(-1)^{|y||z|}2\mathfrak{L}_{\circ}(\alpha(x))\mathfrak{L}_{\circ}(z)y=\mathfrak{L}_{\circ}(\mathfrak{L}_{\circ}(x)y)\alpha(z)-(-1)^{|y||z|}\mathfrak{L}_{\circ}(\mathfrak{L}_{\circ}(x)z)\alpha(y).$$
By Corollary \ref{cor-comp-h-anti-pre-Lie-inv-O-op},
$T=\mathrm{id}$ is an invertible admissible
anti-super-$\mathcal{O}$-operator on $(\mathcal A,[\cdot,\cdot],\alpha)$ associated to
$(\mathcal A,-\mathfrak{L}_{\circ},\alpha)$. Conversely, suppose that
$T:V\rightarrow \mathcal A$ is an invertible admissible
anti-super-$\mathcal{O}$-operator on $(\mathcal A,[\cdot,\cdot],\alpha)$ associated to
$(V,\rho,\beta)$. Let $x\circ_{\mathcal A} y=-T(\rho(x)T^{-1}(y))$. Then
\begin{eqnarray*}
2\alpha(x)\circ_{\mathcal A}[y,z]
&=&2T\rho(\alpha(x))\rho(y)T^{-1}(z)-(-1)^{|y||z|}2T\rho(\alpha(x))\rho(z)T^{-1}(y)\\
&=&T(\rho(T(\rho(x)T^{-1}(y)))T^{-1}(\alpha(z)))-(-1)^{|y||z|}T(\rho(T(\rho(x)T^{-1}(z)))T^{-1}(\alpha(y)))\\
&=&(x\circ_{\mathcal A} y)\circ_{\mathcal A}\alpha(z)-(-1)^{|y||z|}(x\circ_{\mathcal A} z)\circ_{\mathcal A}\alpha(y).
\end{eqnarray*}

By Corollary \ref{cor-comp-h-anti-pre-Lie-inv-O-op},
$(\mathcal A,\circ_{\mathcal A},\alpha)$ is a compatible admissible Hom-Novikov superalgebra
structure on $(\mathcal A,[\cdot,\cdot],\alpha)$.
\end{proof}

\begin{prop}
Let $\mathfrak B$ be a nondegenerate super-commutative $2$-cocycle on a
Hom-Lie superalgebra $(\mathcal A,[\cdot,\cdot],\alpha)$. Define a bilinear operation
$\circ: {\mathcal A}\times \mathcal A\rightarrow \mathcal A$ by Eq.~(\ref{inv-Hom-anti-pre-Lie}). Then $(\mathcal A,\circ,\alpha)$ is an
admissible Hom-Novikov superalgebra if and only if the following equation
holds:

\begin{equation}\label{eq:addition}
2\mathfrak B([x,y],[z,t])=(-1)^{|y||z|}\mathfrak B([x\circ z,
y],\alpha(t))-(-1)^{|t|(|y|+|z|)}\mathfrak B([x\circ t,y],\alpha(z))),\;\;\forall x,y,z,t\in\mathcal H( \mathcal A).
\end{equation}
\end{prop}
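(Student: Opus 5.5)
Since $(\mathcal A,\circ,\alpha)$ defined by Eq.~(\ref{inv-Hom-anti-pre-Lie}) is already a compatible Hom-anti-pre-Lie superalgebra by Theorem~\ref{ant-h-pr-Lie-from-nondeg-bil}, it satisfies Eq.~(\ref{cond-Hom-ant-pre-Lie1}) and $[x,y]_\circ=[x,y]$. (That theorem is stated for multiplicative $\alpha$, so we tacitly assume this hypothesis here as well.) By Definition~\ref{defi:admissible Hom-Novikov superalgebras}, it therefore only remains to show that Eq.~(\ref{admiss-Hom-Nov-sup2}) is equivalent to Eq.~(\ref{eq:addition}).

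The plan is to pair both sides of Eq.~(\ref{admiss-Hom-Nov-sup2}) with a test element and use nondegeneracy. Since the defining relation (\ref{inv-Hom-anti-pre-Lie}) only evaluates $\mathfrak B$ against elements of the form $\alpha(\cdot)$, we pair with $\alpha(t)$. This requires $\alpha$ to be surjective, so that $\{\alpha(t)\}$ exhausts $\mathcal A$; I would add regularity as a standing assumption, or note that it is implicit in the definition of $\circ$ through Eq.~(\ref{inv-Hom-anti-pre-Lie}).

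The main step is to compute $\mathfrak B(2\alpha(x)\circ[y,z],\alpha(t))$. Applying Eq.~(\ref{inv-Hom-anti-pre-Lie}) with $x\mapsto\alpha(x)$ and $y\mapsto[y,z]$ gives
\[
2(-1)^{|x|(|y|+|z|)}\mathfrak B\bigl(\alpha([y,z]),[\alpha(x),t]\bigr).
\]
Multiplicativity gives $\alpha([y,z])=[\alpha(y),\alpha(z)]$, but the second slot $[\alpha(x),t]$ is not of the form $\alpha(\cdot)$. Here is the subtlety: to reach the form $\mathfrak B([x,y],[z,t])$ of Eq.~(\ref{eq:addition}), I would use super-commutativity of $\mathfrak B$ and relabel variables, absorbing the $\alpha$'s by replacing $x,y,z,t$ with preimages under $\alpha$. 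This is again why I expect to need $\alpha$ bijective, or else to state Eq.~(\ref{eq:addition}) up to such a substitution.

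Next, for the right-hand side I compute $\mathfrak B((x\circ y)\circ\alpha(z),\alpha(t))=(-1)^{(|x|+|y|)|z|}\mathfrak B(\alpha^2(z),[x\circ y,t])$ by Eq.~(\ref{inv-Hom-anti-pre-Lie}), and similarly for the term with $y$ and $z$ exchanged. After using the symmetry of $\mathfrak B$, the relabelings $(y,z,t)\to(z,t,y)$, and the cocycle condition (\ref{sup-comm-2-cocyc}) to move brackets across $\mathfrak B$, these become $(-1)^{|y||z|}\mathfrak B([x\circ z,y],\alpha(t))-(-1)^{|t|(|y|+|z|)}\mathfrak B([x\circ t,y],\alpha(z))$. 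Equating both sides for all $t$ and using nondegeneracy then yields the equivalence in both directions, since every step is reversible.

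The main obstacle will be the sign bookkeeping, and above all the mismatch of $\alpha$-twists between the two sides. I would first carry out the whole computation with $\alpha=\mathrm{id}$ to fix the variable relabeling that produces exactly the shape of Eq.~(\ref{eq:addition}), and only then reinsert $\alpha$ and the Koszul signs, invoking multiplicativity and invertibility of $\alpha$ where needed.
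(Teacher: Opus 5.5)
Your route is the paper's route. Theorem~\ref{ant-h-pr-Lie-from-nondeg-bil} supplies Eq.~\eqref{cond-Hom-ant-pre-Lie1}, and then Eq.~\eqref{admiss-Hom-Nov-sup2} is paired against an $\alpha$-image using Eq.~\eqref{inv-Hom-anti-pre-Lie} and nondegeneracy. Your test vector $\alpha(t)$, followed by the relabeling $(y,z,t)\to(z,t,y)$, is exactly the paper's test vector $\alpha(y)$. Your partial computations are correct, and you are right that multiplicativity and surjectivity of $\alpha$ are being used tacitly.

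The gap is the step you leave open: reaching exactly Eq.~\eqref{eq:addition}. If you carry your computation through (after your relabeling), the pairing gives, for all homogeneous $x,y,z,t$,
\[
2\,\mathfrak B\bigl([\alpha(x),y],\alpha([z,t])\bigr)=(-1)^{|y||z|}\,\mathfrak B\bigl([x\circ z,y],\alpha^2(t)\bigr)-(-1)^{|t|(|y|+|z|)}\,\mathfrak B\bigl([x\circ t,y],\alpha^2(z)\bigr).
\]
With $\alpha$ onto and $\mathfrak B$ nondegenerate, it is this identity that is equivalent to Eq.~\eqref{admiss-Hom-Nov-sup2}. It coincides with Eq.~\eqref{eq:addition} when $\alpha=\mathrm{id}$, but its $\alpha$-twists are different.

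Neither device you propose closes this difference.
\begin{itemize}
\item The cocycle identity \eqref{sup-comm-2-cocyc} plays no role in the pairing; only Eq.~\eqref{inv-Hom-anti-pre-Lie} and super-commutativity are used. It also gives no mechanism for lowering $\alpha^2(t)$ to $\alpha(t)$, or for removing the twist in $[\alpha(x),y]$.
\item Substituting $\alpha$-preimages cannot repair both sides at once. Turning the left side into $2\mathfrak B([x,y],[z,t])$ requires replacing $x,z,t$ by $\alpha^{-1}(x),\alpha^{-1}(z),\alpha^{-1}(t)$. After that, the first term on the right becomes $\mathfrak B([\alpha^{-1}(x)\circ\alpha^{-1}(z),y],\alpha(t))$, not $\mathfrak B([x\circ z,y],\alpha(t))$.
\item Even if $\circ$ commutes with $\alpha$ (true, for instance, when $\mathfrak B$ is $\alpha$-invariant, which is not assumed), you are left with $\mathfrak B([\alpha^{-1}(x\circ z),y],\alpha(t))$.
\end{itemize}
So your sketch does not establish the statement as written.

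The paper's one-line proof hides the same mismatch. Its displayed identity writes $\mathfrak B([x,y],[z,t])$, $\mathfrak B(\alpha(z),\cdot)$ and $\mathfrak B(\alpha(t),\cdot)$. Eq.~\eqref{inv-Hom-anti-pre-Lie} actually produces $\mathfrak B([\alpha(x),y],\alpha([z,t]))$, $\alpha^2(z)$ and $\alpha^2(t)$. Your suspicion of the $\alpha$-twists is therefore justified.

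What your argument soundly proves is that admissibility is equivalent to the twisted identity above. To get Eq.~\eqref{eq:addition} itself, you would need extra hypotheses, and you have not identified any.
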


\begin{proof}
By Theorem \ref{ant-h-pr-Lie-from-nondeg-bil}, $(\mathcal A,\circ,\alpha)$ is a Hom-anti-pre-Lie superalgebra. Moreover, for
all $x,y,z,t\in\mathcal H(\mathcal A)$,
\begin{eqnarray*}
&&2\mathfrak B([x,y],[z,t])+(-1)^{|z|(|x|+|y|)+|y||t|}\mathfrak B(\alpha(z),[x\circ t,y])-(-1)^{|t|(|x|+|y|+|z|)+|y||z|}\mathfrak B(\alpha(t),[x\circ z,
y])\\
&&\quad=(-1)^{|x||y|}\mathfrak B(\alpha(y),2\alpha(x)\circ[z,t]-(-1)^{|z||t|}(x\circ t)\circ\alpha(z)+(x\circ z)\circ \alpha(t)).
\end{eqnarray*}
Therefore $(\mathcal A,\circ,\alpha)$ is an admissible Hom-Novikov superalgebra if and only
if Eq.~(\ref{eq:addition}) holds.
\end{proof}

\begin{rem} The above conclusion can be obtained from another
approach. Let $T:\mathcal A\rightarrow \mathcal A^{*}$ be an even linear map
defined by 
\begin{equation}\label{eq:HomT} \langle T(x),y\rangle=\mathfrak B(x,y),\qquad \forall x,y\in \mathcal H(\mathcal A). 
\end{equation}
Then it is straightforward to show that
$T^{-1}$ is an admissible anti-$\mathcal{O}$-operator of $(\frak
g,[\cdot,\cdot],\alpha)$ associated to $(\mathcal A^{*},\mathrm{ad}^{\star},(\alpha^{-1})^*)$ if and
only if Eq.~(\ref{eq:addition}) holds. Thus the same conclusion
follows.
\end{rem}

\subsection{Constructions from super-commutative Hom-associative superalgebras}

We introduce the notion of admissible pairs on super-commutative
Hom-associative superalgebras as a generalization  of derivations. Then we
give the constructions of Hom-Novikov superalgebras and the corresponding
admissible Hom-Novikov superalgebras from super-commutative Hom-associative superalgebras
with admissible pairs, generalizing the known various
constructions of Hom-Novikov superalgebras from super-commutative Hom-associative
superalgebras with derivations. Especially under certain conditions,
they coincide with the admissible Hom-Novikov superalgebra structures
induced from the natural nondegenerate  super-commutative $2$-cocycles on
the Hom-Lie superalgebras obtained from the super-commutative Hom-associative
superalgebras with derivations.

\begin{df}
An \textbf{admissible} pair on  a super-commutative Hom-associative superalgebra $(\mathcal A,\mu,\alpha)$
consists of two even linear maps
$\mathcal P,\mathcal Q:\mathcal A\rightarrow \mathcal A$ satisfying 

\begin{eqnarray}
&&\mathcal P\alpha=\alpha\mathcal P,\;\;\mathcal Q\alpha=\alpha\mathcal Q\label{eq:defi:admissible pair1}\\
&&\mathcal Q(\mu(x,y))=\mu(\mathcal Q(x),y)+\mu(x,\mathcal P(y)), \;\;\forall x,y\in\mathcal H(\mathcal A)\label{eq:defi:admissible pair2}.
\end{eqnarray}
We denote it by $(\mathcal P,\mathcal Q)$.
\end{df}

\begin{exa}
Let $(A,\mu,\alpha)$ be a super-commutative Hom-associative superalgebra.
\begin{enumerate}
\item Let $\mathfrak D$ be an even derivation on $(\mathcal A,\mu,\alpha)$. Then
\begin{enumerate}
\item $(\mathfrak D,\mathfrak D)$ is an admissible pair. 
\item $(\mathfrak D,\mathfrak D+\lambda {\rm
Id})$ is an admissible pair, where $\lambda\in \mathbb K$.
\item
$(\mathfrak D,\mathfrak D+\mathfrak L_\mu (a))$ is an admissible pair, where $a\in
\mathcal H(\mathcal A)$.  
\item Let $\pi:\mathcal A\rightarrow \mathcal A$ be a linear map. We say that $\pi$ is called {\bf admissible to $(\mathcal A,\mathfrak D)$} if
$$\mu(\pi(x), y)=\pi(\mu(x, y))+\mu(x,\mathfrak D(y)),\;\;\forall x,y\in \mathcal H(\mathcal A).$$
In this setting $(\mathfrak D,-\pi)$ is an admissible pair.
 \item Let $\mathfrak B$ be a nondegenerate
supersymmetric invariant bilinear form on $(\mathcal A,\mu,\alpha)$.  Let $\hat{\mathfrak D}:\mathcal A\rightarrow \mathcal A$ be the adjoint operator of
$\mathfrak D$ with respect to $\mathfrak B$, that is,
$$\mathfrak B(\mathfrak D(x),y)=\mathfrak B(x,\hat{\mathfrak D}(y)),\forall x,y\in\mathcal H(\mathcal A).$$
Then $\hat{\mathfrak D}$ is admissible to $(\mathcal A,\mathfrak D)$, or
equivalently, $(\mathfrak D,-\hat{\mathfrak D})$ is an admissible pair.
\end{enumerate}
\item Let $\mathcal Q:\mathcal A\rightarrow \mathcal A$ be an even linear map satisfying $\mathcal Q\alpha=\alpha\mathcal Q$ and 
$$\mathcal Q(\mu(x,y))=\mu(\mathcal Q(x), y+\mu(x,\mathcal Q(y)+\lambda \mu(x,y),\;\;\forall
x,y\in\mathcal H(\mathcal A),$$ where $\lambda\in \mathbb K$. Then $(\mathcal Q+\lambda {\rm
Id},\mathcal Q)$ is an admissible pair. 
\item Let $\mathcal Q:\mathcal A\rightarrow \mathcal A$ be an even
linear map satisfying $\mathcal Q\alpha=\alpha\mathcal Q$ and 
$$\mathcal Q(\mu(x, y))=\mu(\mathcal Q(x),y)+\mu(x,\mathcal Q(y))+\mu(a,\mu(x,y)),\;\;\forall
x,y\in\mathcal H(\mathcal A),$$ where $a\in\mathcal H(\mathcal A)$. Then $(\mathcal Q+\mathfrak L_\mu (a),\mathcal Q)$ is
an admissible pair.
\end{enumerate}

\end{exa}

The following proposition gives a construction of a Hom-Novikov superalgebra structure using an
admissible pair.
\begin{prop}\label{ex:Novikov algebra from admissible pair}
Let $(\mathcal P,\mathcal Q)$ be an admissible pair on a super-commutative Hom-associative
superalgebra $(\mathcal A,\mu,\alpha)$. Define a bilinear operation $\star$ on $\mathcal A$ by
\begin{equation}\label{eq:ex:Novikov algebra from admissible pair}
x\star y=\mu(x,\mathcal Q(y)), \;\;\forall x,y\in\mathcal H(\mathcal A).
\end{equation}
Then $(\mathcal A,\star,\alpha)$ is a Hom-Novikov superalgebra.
\end{prop}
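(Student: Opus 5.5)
The plan is to check the two defining identities directly: the Hom-pre-Lie identity \eqref{HPL} and the right-commutativity \eqref{eq:defi:Hom-Novikov superalgebras1}. The tools are Hom-associativity $\mu(\mu(x,y),\alpha(z))=\mu(\alpha(x),\mu(y,z))$, super-commutativity $\mu(x,y)=(-1)^{|x||y|}\mu(y,x)$, the relation $\mathcal Q\alpha=\alpha\mathcal Q$ from \eqref{eq:defi:admissible pair1}, and the admissible-pair identity \eqref{eq:defi:admissible pair2}. Since $\mathcal P,\mathcal Q$ are even, $\star$ is even and all degrees are preserved, so the signs are just those coming from super-commutativity.

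\emph{Step 1 (right-commutativity).} For homogeneous $x,y,z$, I would compute
\[
(x\star y)\star\alpha(z)=\mu(\mu(x,\mathcal Q(y)),\mathcal Q(\alpha(z)))=\mu(\mu(x,\mathcal Q(y)),\alpha(\mathcal Q(z)))=\mu(\alpha(x),\mu(\mathcal Q(y),\mathcal Q(z))).
\]
The middle equality uses $\mathcal Q\alpha=\alpha\mathcal Q$, and the last uses Hom-associativity. By super-commutativity, $\mu(\mathcal Q(y),\mathcal Q(z))=(-1)^{|y||z|}\mu(\mathcal Q(z),\mathcal Q(y))$. Running the same computation with $y$ and $z$ exchanged then gives \eqref{eq:defi:Hom-Novikov superalgebras1}.

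\emph{Step 2 (Hom-pre-Lie).} First I would expand the second term of the associator using \eqref{eq:defi:admissible pair2}:
\[
\alpha(x)\star(y\star z)=\mu\bigl(\alpha(x),\mathcal Q(\mu(y,\mathcal Q(z)))\bigr)=\mu\bigl(\alpha(x),\mu(\mathcal Q(y),\mathcal Q(z))\bigr)+\mu\bigl(\alpha(x),\mu(y,\mathcal P\mathcal Q(z))\bigr).
\]
Subtracting this from the expression for $(x\star y)\star\alpha(z)$ obtained in Step 1, the $\mu(\mathcal Q(y),\mathcal Q(z))$ terms cancel. This leaves
\[
(x\star y)\star\alpha(z)-\alpha(x)\star(y\star z)=-\mu\bigl(\alpha(x),\mu(y,w)\bigr),\qquad w=\mathcal P\mathcal Q(z).
\]

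\emph{Step 3 (supersymmetry of the associator).} I would rewrite the right-hand side with Hom-associativity as $-\mu(\mu(x,y),\alpha(w))$. Because $\mu(x,y)=(-1)^{|x||y|}\mu(y,x)$, this expression is $(-1)^{|x||y|}$-symmetric in $x$ and $y$. That is exactly \eqref{HPL}, so $(\mathcal A,\star,\alpha)$ is a Hom-pre-Lie superalgebra, and together with Step 1 it is Hom-Novikov.

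The only delicate point is Step 3. There one has to move $\alpha$ onto $w$ via Hom-associativity instead of trying to commute $\alpha$ past $\mathcal P\mathcal Q$. This route needs no relation between $\alpha$ and $\mathcal P$ beyond what is assumed, and it also avoids any multiplicativity assumption on $\alpha$.
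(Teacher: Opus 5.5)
Your proposal is correct and follows essentially the same direct computation as the paper: Hom-associativity together with $\mathcal Q\alpha=\alpha\mathcal Q$ gives right-commutativity, and the admissible-pair identity reduces the Hom-associator to $-\mu(\alpha(x),\mu(y,\mathcal P\mathcal Q(z)))$. Your Step 3 rewrites this as $-\mu(\mu(x,y),\alpha(\mathcal P\mathcal Q(z)))$ to exhibit the $(-1)^{|x||y|}$-symmetry, which spells out the final step that the paper's displayed chain states only loosely (with a dropped $\alpha$ and a misplaced sign).
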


\begin{proof}
Let $x,y,z\in\mathcal H(\mathcal A)$. Then we have
\begin{eqnarray*}
(x\star y)\star\alpha(z)-\alpha(x)\star(y\star z)&=&\mu(\mu(x,\mathcal Q(y)),\mathcal Q(\alpha(z)))-\mu(\alpha(x),\mathcal Q(\mu(y,\mathcal Q(z)))\\&=&
\mu(\alpha(x),\mu(\mathcal Q(y),\mathcal Q(z)))-\mu(\alpha(x),\mu(\mathcal Q(y),\mathcal Q(z)))\\&&-\mu(\alpha(x),\mu(y,\mathcal P(\mathcal Q(z))))
\\&=&-\mu(x,\mu(y,\mathcal P(\mathcal Q(z)))\\&=&(y\star x)\star \alpha(z)-(-1)^{|x||y|}\alpha(y)\star
(x\star z).
\end{eqnarray*}
Moreover, we have
\begin{eqnarray*}
(x\star y)\star\alpha(z)&=&\mu(\mu(x,\mathcal Q(y)),\mathcal Q(\alpha(z)))\\&=&(-1)^{|y||z|}\mu(\alpha(x),\mu(\mathcal Q(z),\mathcal Q(y)))\\&=&(-1)^{|y||z|}\mu(\mu(x,\mathcal Q(z)),\mathcal Q(\alpha(y)))\\&=&(-1)^{|y||z|}(x\star z)\star\alpha(y).
\end{eqnarray*}
Hence $(\mathcal A,\star,\alpha)$ is a Hom-Novikov superalgebra.
\end{proof}

\begin{cor}\label{cor:admissible Novikov algebra from admissible pair}
Let $(\mathcal P,\mathcal Q)$ be an admissible pair on a super-commutative Hom-associative
superalgebra $(\mathcal A,\mu,\alpha)$. Define a bilinear operation $\circ$ on $\mathcal A$ by
\begin{equation}\label{eq:cor:admissible Novikov algebra from admissible pair}
x\circ y=\mu(x,\mathcal Q(y))+2\mu(\mathcal Q(x),y),\;\;\forall x,y\in\mathcal H(\mathcal  A).
\end{equation}
Then $(\mathcal A,\circ,\alpha)$ is an admissible Hom-Novikov superalgebra. 
\end{cor}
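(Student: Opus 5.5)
The natural route is to combine Proposition \ref{ex:Novikov algebra from admissible pair} with Proposition \ref{pro:anti-pre-Lie algebras from Novikov algebras}, so that no new identity has to be checked directly.

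First, by Proposition \ref{ex:Novikov algebra from admissible pair}, the operation $x\star y=\mu(x,\mathcal Q(y))$ makes $(\mathcal A,\star,\alpha)$ a Hom-Novikov superalgebra. In particular it is a Hom-pre-Lie superalgebra satisfying Eq.~\eqref{eq:defi:Hom-Novikov superalgebras1}.

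Next, compute the $2$-Hom-superalgebra of $(\mathcal A,\star,\alpha)$ given by Eq.~\eqref{eq:pro:H-anti-pre-Lie superalgebras from H-Novikov superalgebras}:
\[
x\star y+2(-1)^{|x||y|}y\star x=\mu(x,\mathcal Q(y))+2(-1)^{|x||y|}\mu(y,\mathcal Q(x)).
\]
Since $\mu$ is super-commutative and $\mathcal Q$ is even, we have $\mu(y,\mathcal Q(x))=(-1)^{|x||y|}\mu(\mathcal Q(x),y)$. The sign $(-1)^{|x||y|}$ therefore squares to $1$, and the $2$-Hom-superalgebra product is exactly $\mu(x,\mathcal Q(y))+2\mu(\mathcal Q(x),y)=x\circ y$ as in Eq.~\eqref{eq:cor:admissible Novikov algebra from admissible pair}.

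Finally, the "moreover" part of Proposition \ref{pro:anti-pre-Lie algebras from Novikov algebras} states that the $2$-superalgebra of a Hom-Novikov superalgebra is an admissible Hom-Novikov superalgebra. Hence $(\mathcal A,\circ,\alpha)$ is an admissible Hom-Novikov superalgebra. We use the same twist $\alpha$ throughout, so the twisting compatibilities carry over automatically.

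The only point needing care is the sign bookkeeping in the identification of the two products. This relies on super-commutativity $\mu(a,b)=(-1)^{|a||b|}\mu(b,a)$ and on $|\mathcal Q(x)|=|x|$, which holds because $\mathcal Q$ is even. Beyond that, the proof reduces entirely to the two earlier propositions.
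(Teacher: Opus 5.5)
Your proposal is correct and follows the paper's intended route: the corollary is stated without proof because it comes from applying the ``moreover'' part of Proposition~\ref{pro:anti-pre-Lie algebras from Novikov algebras} to the Hom-Novikov superalgebra $x\star y=\mu(x,\mathcal Q(y))$ of Proposition~\ref{ex:Novikov algebra from admissible pair}. Your sign check $2(-1)^{|x||y|}\mu(y,\mathcal Q(x))=2\mu(\mathcal Q(x),y)$, which uses super-commutativity and the evenness of $\mathcal Q$, is the only identification needed to recognize the $2$-Hom-superalgebra product as the operation $\circ$ in the statement.
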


\begin{exa}\label{ex:der}
Let $\mathfrak D$ be an even derivation on a super-commutative Hom-associative superalgebra
$(\mathcal A,\mu,\alpha)$. Define a bilinear operation $\circ:\mathcal A\times
\mathcal A\rightarrow \mathcal A$ by
\begin{equation}
x\circ y=\mu(x,\mathfrak D(y))+2\mu(\mathfrak D(x),y)+\mu(a,\mu(x, y)),\;\;\forall
x,y\in\mathcal H(\mathcal A), \end{equation} where $a\in {\mathbb K}$ or $a\in\mathcal H(\mathcal A)$.
Then $(\mathcal A,\circ,\alpha)$ is an admissible Hom-Novikov superalgebra. Note that in
this case, both $(\mathfrak D,\mathfrak D+\frac{1}{3}a {\rm Id})$ ($a\in \mathbb K$)
and $(\mathfrak D,\mathfrak D+\frac{1}{3}\mathfrak L_\mu(a))$ ($a\in\mathcal H(\mathcal A)$) are
admissible pairs.
\end{exa}

\begin{prop}\label{ex:commutative 2-cocycle}
Let $(\mathcal P,\mathcal Q)$ be an admissible pair on a super-commutative Hom-associative
superalgebra $(\mathcal A,\mu,\alpha)$. Define a bilinear operation $[\cdot,\cdot]:\mathcal A\times
\mathcal A\rightarrow\mathcal A$ by
\begin{equation}\label{eq:ex:commutative 2-cocycle}
[x,y]=\mu(\mathcal Q(x),y)-\mu(x,\mathcal Q(y))=\mu(\mathcal P(x),y)-\mu(x,\mathcal P(y)), \forall x,y\in\mathcal H(\mathcal A).
\end{equation}
Then $(\mathcal A,[\cdot,\cdot],\alpha)$ is a Hom-Lie superalgebra. Moreover, if there is a symmetric bilinear form $\mathfrak{B}$ on $(\mathcal A,\mu,\alpha)$ satisfying
\begin{eqnarray}
 \mathfrak{B}(\alpha(x),\alpha(y))&=&\mathfrak{B}(x,y),\label{def-sym-inv-bil-form-H-ass1} \\
 \mathfrak{B}(\mu(x,y),\alpha(z))&=&\mathfrak{B}(\alpha(x),\mu(y,z)),\label{def-sym-inv-bil-form-H-ass2}
\end{eqnarray}
for any $x,y,z\in\mathcal H(\mathcal A)$,
then $\mathfrak{B}$ is a super-commutative $2$-cocycle on $(\mathcal A,[\cdot,\cdot],\alpha)$.
\end{prop}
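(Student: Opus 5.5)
The proof has three parts. First I show that the two expressions for $[\cdot,\cdot]$ in Eq.~\eqref{eq:ex:commutative 2-cocycle} agree. Then I obtain the Hom-Lie structure from Proposition~\ref{ex:Novikov algebra from admissible pair}. Finally I verify the cocycle identity \eqref{sup-comm-2-cocyc} directly from the invariance of $\mathfrak B$.

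\emph{Equality of the two expressions.} Write the admissibility condition \eqref{eq:defi:admissible pair2} for the pair $(x,y)$:
\[
\mathcal Q(\mu(x,y))=\mu(\mathcal Q(x),y)+\mu(x,\mathcal P(y)).
\]
Write it also for the swapped pair $(y,x)$ and multiply by $(-1)^{|x||y|}$. Since $\mathcal P,\mathcal Q$ are even and $\mu$ is super-commutative, the swapped version becomes
\[
\mathcal Q(\mu(x,y))=\mu(x,\mathcal Q(y))+\mu(\mathcal P(x),y).
\]
Subtracting the two identities gives $\mu(\mathcal Q(x),y)-\mu(x,\mathcal Q(y))=\mu(\mathcal P(x),y)-\mu(x,\mathcal P(y))$, as claimed.

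\emph{Hom-Lie structure.} Super-skew-symmetry of $[\cdot,\cdot]$ follows at once from super-commutativity of $\mu$. For the Hom super-Jacobi identity I avoid a direct computation. Proposition~\ref{ex:Novikov algebra from admissible pair} says that $x\star y=\mu(x,\mathcal Q(y))$ defines a Hom-Novikov, hence Hom-pre-Lie, superalgebra. A Hom-pre-Lie superalgebra is Hom-Lie-admissible, so its supercommutator $[x,y]_\star$ is a Hom-Lie bracket with twist $\alpha$. Using super-commutativity,
\[
[x,y]_\star=\mu(x,\mathcal Q(y))-(-1)^{|x||y|}\mu(y,\mathcal Q(x))=\mu(x,\mathcal Q(y))-\mu(\mathcal Q(x),y)=-[x,y].
\]
Both super-skew-symmetry and the Hom super-Jacobi identity are homogeneous of degree $2$ in the bracket, so they are preserved under $[\cdot,\cdot]\mapsto-[\cdot,\cdot]$. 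Hence $(\mathcal A,[\cdot,\cdot],\alpha)$ is a Hom-Lie superalgebra.

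\emph{The cocycle property.} Expand the cyclic sum in \eqref{sup-comm-2-cocyc} using the $\mathcal Q$-form of the bracket. This gives six terms of the shapes $\mathfrak B(\alpha(a),\mu(\mathcal Q(b),c))$ and $\mathfrak B(\alpha(a),\mu(b,\mathcal Q(c)))$. I would bring each of them to a common normal form $\mathfrak B(\mu(u,v),\alpha(\mathcal Q(w)))$, with $\{u,v,w\}=\{x,y,z\}$, using:
\begin{itemize}
\item symmetry of $\mathfrak B$ and super-commutativity of $\mu$;
\item the invariance \eqref{def-sym-inv-bil-form-H-ass2}, $\mathfrak B(\alpha(a),\mu(b,c))=\mathfrak B(\mu(a,b),\alpha(c))$;
\item $\mathcal Q\alpha=\alpha\mathcal Q$;
\item if an $\alpha$ lands on the wrong argument, the $\alpha$-invariance \eqref{def-sym-inv-bil-form-H-ass1}.
\end{itemize}
For example, $\mathfrak B(\alpha(x),\mu(y,\mathcal Q(z)))=\mathfrak B(\mu(x,y),\alpha(\mathcal Q(z)))$. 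A term such as $\mathfrak B(\alpha(z),\mu(\mathcal Q(x),y))$ is first rewritten via super-commutativity as $\pm\mathfrak B(\alpha(z),\mu(y,\mathcal Q(x)))$, which becomes $\pm\mathfrak B(\mu(z,y),\alpha(\mathcal Q(x)))$.

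After normalisation, each of the three normal forms should appear exactly twice with opposite signs: once from a ``$\mu(\mathcal Q(\cdot),\cdot)$'' term and once from a ``$\mu(\cdot,\mathcal Q(\cdot))$'' term of a cyclically shifted summand. They then cancel in pairs. Since $\mathfrak B$ is even, only the case $|x|+|y|+|z|=0$ contributes, which simplifies the signs.

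The main obstacle is this sign bookkeeping, namely checking that the Koszul signs $(-1)^{|x|(|y|+|z|)}$ and $(-1)^{|z|(|x|+|y|)}$ coming from the cyclic sum combine with those from super-commutativity to give exact cancellation. I would verify it pair by pair, using $|x|+|y|+|z|\equiv 0$ to reduce the exponents.
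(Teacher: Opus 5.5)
Your proposal is correct and follows essentially the paper's route: the Hom-Lie structure comes from the Hom-Novikov superalgebra $x\star y=\mu(x,\mathcal Q(y))$, whose supercommutator is $-[\cdot,\cdot]$ as you correctly compute. The cocycle identity is a direct expansion via invariance, and the pairwise cancellation you predict does occur; in fact it needs only \eqref{def-sym-inv-bil-form-H-ass2} and super-commutativity of $\mu$, with no use of $|x|+|y|+|z|=0$. One small slip is that super-skew-symmetry is linear, not quadratic, in the bracket, but it is still preserved under $[\cdot,\cdot]\mapsto-[\cdot,\cdot]$, and you verified it directly anyway.
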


\begin{proof}
The first half part is obtained by a direct proof or follows from
Proposition~\ref{ex:Novikov algebra from admissible pair} or
Corollary~\ref{cor:admissible Novikov algebra from admissible
pair} since $(\mathcal A,[\cdot,\cdot],\alpha)$ is the sub-adjacent Lie algebra of an
admissible Hom-Novikov superalgebra. Note that the second equality in Eq.~(\ref{eq:ex:commutative 2-cocycle}) is due to
Eq.~(\ref{eq:defi:admissible pair2}) and the fact that $(\mathcal A,\mu,\alpha)$ is super-commutative.
For the second half part, let
$x,y,z\in\mathcal H(\mathcal A)$. Then we have
\begin{eqnarray*}
\mathfrak{B}([x,y],\alpha(z))&=&\mathfrak{B}(\mu(\mathcal Q(x),y),\alpha(z))-\mathfrak{B}(\mu(x,\mathcal Q(y)),\alpha(z))\\&=&\mathfrak{B}(\mathcal Q(\alpha(x)),\mu(y,z))-(-1)^{|x||y|}\mathfrak{B}(\mathcal Q(\alpha(y)),\mu(x,z)),
\end{eqnarray*}
\begin{eqnarray*}
\mathfrak{B}([y,z],\alpha(x))&=&\mathfrak{B}(\mu(\mathcal Q(y),z),\alpha(x))-\mathfrak{B}(\mu(y,\mathcal Q(z)),\alpha(x))\\&=&\mathfrak{B}(\mathcal Q(\alpha(y)),\mu(z,x))-(-1)^{|y||z|}\mathfrak{B}(\mathcal Q(\alpha(z)),\mu(y, x)),
\end{eqnarray*} and 
\begin{eqnarray*}
\mathfrak{B}([z,x],\alpha(y))&=&\mathfrak{B}(\mu(\mathcal Q(z),x),\alpha(y))-\mathfrak{B}(\mu(z,\mathcal Q(x)),\alpha(y))\\&=&\mathfrak{B}(\mathcal Q(\alpha(z)),\mu(x,y))-(-1)^{|x||z|}\mathfrak{B}(\mathcal Q(\alpha(x)),\mu(z,y)).
\end{eqnarray*}
Thus, by a direct computation, $\mathfrak{B}$ is a super-commutative $2$-cocycle on $(\mathcal A,[\cdot,\cdot],\alpha)$.
\end{proof}

In particular, as a special case, we get the following conclusion.

\begin{cor}\label{cor:construction}
Let $\mathfrak D$ be an even derivation on a super-commutative Hom-associative superalgebra
$(\mathcal A,\mu,\alpha)$. Then Eq.~(\ref{eq:ex:commutative 2-cocycle}) defines
a Hom-Lie superalgebra $(\mathcal A,[\cdot,\cdot],\alpha)$. If $\mathfrak B$ is a supersymmetric
invariant bilinear form on $(\mathcal A,\mu,\alpha)$, then $\mathfrak B$ is a
super-commutative $2$-cocycle on $(\mathcal A,[\cdot,\cdot],\alpha)$.
\end{cor}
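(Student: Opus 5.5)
The plan is to read Corollary~\ref{cor:construction} as the special case $\mathcal P=\mathcal Q=\mathfrak D$ of Proposition~\ref{ex:commutative 2-cocycle}. First I will check that $(\mathfrak D,\mathfrak D)$ is an admissible pair. We may assume, as in the standing convention for derivations of Hom-algebras, that an even derivation commutes with $\alpha$. Then condition (\ref{eq:defi:admissible pair1}) holds trivially, and condition (\ref{eq:defi:admissible pair2}) is precisely the Leibniz rule
\[
\mathfrak D(\mu(x,y))=\mu(\mathfrak D(x),y)+\mu(x,\mathfrak D(y))
\]
for an even derivation. This is item (1)(a) of the preceding example. With this choice, Eq.~(\ref{eq:ex:commutative 2-cocycle}) becomes $[x,y]=\mu(\mathfrak D(x),y)-\mu(x,\mathfrak D(y))$. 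The first half of Proposition~\ref{ex:commutative 2-cocycle} then says that $(\mathcal A,[\cdot,\cdot],\alpha)$ is a Hom-Lie superalgebra.

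For the second half, I will identify the phrase ``supersymmetric invariant bilinear form on $(\mathcal A,\mu,\alpha)$'' with a supersymmetric form satisfying (\ref{def-sym-inv-bil-form-H-ass1}) and (\ref{def-sym-inv-bil-form-H-ass2}). Once that is done, the cocycle identity (\ref{sup-comm-2-cocyc}) follows directly from the second part of Proposition~\ref{ex:commutative 2-cocycle} applied to the pair $(\mathfrak D,\mathfrak D)$.

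The only step I expect to need care is the cyclic cancellation, in case one wants to confirm it rather than cite it. Expanding $\mathfrak B(\alpha(x),[y,z])$ and its two cyclic companions with invariance and super-commutativity produces six terms of the form $\pm\mathfrak B(\mathfrak D(\alpha(a)),\mu(b,c))$. These six terms cancel in pairs because $\mu(b,c)=(-1)^{|b||c|}\mu(c,b)$. The supersymmetry of $\mathfrak B$ lets one move between $\mathfrak B(\alpha(x),[y,z])$ and $\mathfrak B([y,z],\alpha(x))$, which is the form used in that proof. I would track the sign exponents for each pair, but I expect no further obstacle.
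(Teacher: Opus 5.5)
Your proposal is correct and follows the paper's route: the corollary is the special case $\mathcal P=\mathcal Q=\mathfrak D$ of Proposition~\ref{ex:commutative 2-cocycle}, with $(\mathfrak D,\mathfrak D)$ admissible by item (1)(a) of the preceding example. That item, as you note, implicitly requires $\mathfrak D\alpha=\alpha\mathfrak D$. Your sketch of the cyclic cancellation is also accurate: rewrite each $\mathfrak B(\alpha(x),[y,z])$ as $(-1)^{|x|(|y|+|z|)}\mathfrak B([y,z],\alpha(x))$ and apply invariance, and the six terms $\pm\mathfrak B(\alpha\mathfrak D(a),\mu(b,c))$ cancel in pairs by super-commutativity of $\mu$.
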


\subsection{Hom-anti-pre-Lie Poisson superalgebras and Hom-Novikov-Poisson superalgebras}

We extend the correspondence between Hom-Novikov superalgebras and
admissible Hom-Novikov superalgebras to the level of Poisson type
structures, and hence introduce the notions of Hom-anti-pre-Lie Poisson
superalgebras and admissible Hom-Novikov-Poisson superalgebras. The relationships with transposed Hom-Poisson superalgebras and
Hom-Novikov-Poisson superalgebras as well as a tensor theory are given.

\begin{df}\label{defi:anti-pre-Lie Poisson}
A \textbf{Hom-anti-pre-Lie Poisson superalgebra} is a quadruple
$(\mathcal A,\mu,\circ,\alpha)$, where $(\mathcal A,\mu,\alpha)$ is a super-commutative
Hom-associative superalgebra and $(\mathcal A,\circ,\alpha)$ is a Hom-anti-pre-Lie superalgebra satisfying the following conditions:
\begin{equation}\label{eq:defi:H-anti-pre-Lie Poisson1}
2\mu((x\circ y),\alpha(z))-2(-1)^{|x||y|}\mu((y\circ x), \alpha(z))=(-1)^{|x||y|}\mu(\alpha(y),(x\circ z))-\mu(\alpha(x),(y\circ z)),
\end{equation}
\begin{equation}\label{eq:defi:H-anti-pre-Lie Poisson2}
2\alpha(x)\circ\mu(y, z)=(-1)^{|z|(|x|+|y|)}\mu(z, x)\circ \alpha(y)+(-1)^{|z|(|x|+|y|)}\mu(\alpha(z),(x\circ y)),
\end{equation}
for all $x,y,z\in\mathcal H(\mathcal A)$.\\
An \textbf{admissible Hom-Novikov-Poisson superalgebra} is a Hom-anti-pre-Lie
Poisson superalgebra $(\mathcal A,\mu,\circ,\alpha)$ such that $(\mathcal A,\circ,\alpha)$ is
an admissible Hom-Novikov superalgebra.
\end{df}

\begin{df}\label{defi:Novikov Poisson algebra}
A \textbf{Hom-Novikov-Poisson superalgebra} is a quadruple $(\mathcal A,\mu,\star,\alpha)$, where $(\mathcal A,\mu,\alpha)$ is a super-commutative Hom-associative superalgebra and $(\mathcal A,\star,\alpha)$ is a Hom-Novikov superalgebra satisfying the following conditions:
\begin{equation}\label{eq:defi:H-Novikov Poisson superalgebra1}
\mu(x, y)\star\alpha(z)=\mu(\alpha(x),(y\star z)),
\end{equation}
\begin{equation}\label{eq:defi:H-Novikov Poisson superalgebra2}
\mu((x\star y),\alpha(z))-(-1)^{|x||y|}\mu((y\star x),\alpha(z))=\alpha(x)\star\mu(y,z)-(-1)^{|x||y|}\alpha(y)\star\mu(x,z),
\end{equation}
for all $x,y,z\in\mathcal H(\mathcal A)$.
\end{df}

\begin{lem} \label{lem:H-anti-pre-Lie Poisson}
Let $(\mathcal A,\mu,\alpha)$ be a super-commutative Hom-associative superalgebra, and
$\circ:\mathcal A\times \mathcal A\rightarrow \mathcal A$ be an even bilinear operation. Suppose that
Eqs.~(\ref{eq:defi:H-anti-pre-Lie Poisson1}) and
(\ref{eq:defi:H-anti-pre-Lie Poisson2}) hold.
Then, the following equations hold:
\begin{eqnarray}\label{eq:defi:H-anti-pre-Lie Poisson3}
&&2\alpha(x)\circ\mu(y,z)=(-1)^{|x||y|}\mu(y,x)\circ\alpha(z)+\mu(\alpha(y),(x\circ z)),\\
\label{eq:defi:H-anti-pre-Lie Poisson4}
&&(-1)^{|z|(|x|+|y|)}\mu(z,x)\circ\alpha(y)+(-1)^{|z|(|x|+|y|)}\mu(\alpha(z),(x\circ y))=(-1)^{|x||y|}\mu(y,x)\circ\alpha(z)\\&&\qquad\qquad\qquad\qquad\qquad\qquad\qquad\qquad\qquad\qquad\qquad\qquad\quad+(-1)^{|x||y|}\mu(\alpha(y),(x\circ z)),\nonumber\\
\label{eq:defi:H-anti-pre-Lie Poisson5}
&&\mu(\alpha(x),(y\circ z))-(-1)^{|x||y|}\mu(\alpha(y),(x\circ z))=2(-1)^{|x||y|}\alpha(y)\circ\mu(x, z)-2\alpha(x)\circ\mu(y,z),\\
\label{eq:defi:H-anti-pre-Lie Poisson6}
&&\mu(\alpha(x),(y\circ z))-(-1)^{|x||y|}\mu(\alpha(y),(x\circ z))=2(-1)^{|y||z|}\mu(\alpha(x),(z\circ y))\\&&\qquad\quad\qquad\qquad\qquad\qquad\qquad\qquad\qquad\qquad-(-1)^{|x|(|y|+|z|)}2\mu(\alpha(y),(z\circ x)),\nonumber\\
\label{eq:defi:H-anti-pre-Lie Poisson7}
&&\mu(\alpha(x),(z\circ y))-(-1)^{|x|(|y|+|z|)}\mu(\alpha(y),(z\circ x))=(-1)^{|x||y|}\alpha(y)\circ\mu(x,z)-x\circ\mu(y,z),\\
\label{eq:defi:H-anti-pre-Lie Poisson8}
&&\alpha(x)\circ\mu(y,z)+(-1)^{|x|(|y|+|z|)}\mu(y,z)\circ\alpha(x)=(-1)^{|x||y|}\alpha(y)\circ\mu(x,z)\\&&\qquad\qquad\qquad\qquad\qquad\qquad\qquad\qquad\qquad\qquad+(-1)^{|y||z|}\mu(x,z)\circ\alpha(y),\nonumber\\
\label{eq:defi:H-anti-pre-Lie Poisson9}
&&\mu(z,(x\circ y))-(-1)^{|x||y|}\mu(\alpha(z),(y\circ x))=(-1)^{|z|(|x|+|y|)}\alpha(x)\circ\mu(y,z)\\&&\qquad\qquad\qquad\qquad\qquad\qquad\qquad\qquad\qquad-(-1)^{|z|(|x|+|y|)+|x||y|}\alpha(y)\circ\mu(x,z),\nonumber
\end{eqnarray}
for all $x,y,z\in\mathcal H(\mathcal A)$.
\end{lem}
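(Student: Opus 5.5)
The plan is to derive each of the seven identities from the two hypotheses \eqref{eq:defi:H-anti-pre-Lie Poisson1} and \eqref{eq:defi:H-anti-pre-Lie Poisson2}. The only other tools are super-commutativity $\mu(x,y)=(-1)^{|x||y|}\mu(y,x)$ and relabelling variables. Hom-associativity of $\mu$ and the Hom-anti-pre-Lie axioms of $\circ$ are not needed. Every step is a linear manipulation of the two hypotheses under permutations of $(x,y,z)$, with the Koszul signs tracked.

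First I would get \eqref{eq:defi:H-anti-pre-Lie Poisson3}. Apply \eqref{eq:defi:H-anti-pre-Lie Poisson2} with $y$ and $z$ exchanged. This gives $2\alpha(x)\circ\mu(z,y)$ on the left. By super-commutativity it equals $(-1)^{|y||z|}2\alpha(x)\circ\mu(y,z)$. The right side then becomes $\mu(y,x)\circ\alpha(z)$ and $\mu(\alpha(y),x\circ z)$, with the signs shown after multiplying through by $(-1)^{|y||z|}$. Comparing the original \eqref{eq:defi:H-anti-pre-Lie Poisson2} with this swapped version, both left sides are equal to $2\alpha(x)\circ\mu(y,z)$. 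Equating their right-hand sides gives \eqref{eq:defi:H-anti-pre-Lie Poisson4}.

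Next, for \eqref{eq:defi:H-anti-pre-Lie Poisson5}, I would write \eqref{eq:defi:H-anti-pre-Lie Poisson3} for $(x,y,z)$ and for $(y,x,z)$. I would then take the super-antisymmetric combination (first minus $(-1)^{|x||y|}$ times second). The terms $\mu(y,x)\circ\alpha(z)$ and $\mu(x,y)\circ\alpha(z)$ cancel by super-commutativity. What remains is exactly \eqref{eq:defi:H-anti-pre-Lie Poisson5}, up to an overall sign.

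Identity \eqref{eq:defi:H-anti-pre-Lie Poisson6} comes next. I would use \eqref{eq:defi:H-anti-pre-Lie Poisson1} to rewrite the left side of \eqref{eq:defi:H-anti-pre-Lie Poisson5} as $-2\mu([x,y]_\circ,\alpha(z))$. Its right side I would expand through \eqref{eq:defi:H-anti-pre-Lie Poisson2}, producing terms $\mu(z,x)\circ\alpha(y)$ and $\mu(\alpha(z),x\circ y)$. Subtracting the $x\leftrightarrow y$ version and using \eqref{eq:defi:H-anti-pre-Lie Poisson4} to trade $\mu(z,\cdot)\circ\alpha(\cdot)$ terms for $\mu(\alpha(\cdot),\cdot\circ\cdot)$ terms should leave only products $\mu(\alpha(\cdot),\cdot\circ\cdot)$. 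That yields \eqref{eq:defi:H-anti-pre-Lie Poisson6}. Identities \eqref{eq:defi:H-anti-pre-Lie Poisson7} and \eqref{eq:defi:H-anti-pre-Lie Poisson9} are then obtained by combining \eqref{eq:defi:H-anti-pre-Lie Poisson5} and \eqref{eq:defi:H-anti-pre-Lie Poisson6}, with a cyclic relabelling, so that the $\mu(\alpha(\cdot),z\circ\cdot)$ terms match $\alpha(\cdot)\circ\mu(\cdot,z)$ terms. For \eqref{eq:defi:H-anti-pre-Lie Poisson8}, I would apply \eqref{eq:defi:H-anti-pre-Lie Poisson3} with roles rotated to express $\mu(y,z)\circ\alpha(x)$. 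I would add $\alpha(x)\circ\mu(y,z)$ and observe that the result is super-symmetric in $x$ and $y$, using \eqref{eq:defi:H-anti-pre-Lie Poisson4}.

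The main obstacle is bookkeeping: the correct Koszul sign for every permutation, and the fact that the statement is printed with some apparent typos. For example, \eqref{eq:defi:H-anti-pre-Lie Poisson7} has $x\circ\mu(y,z)$ where $\alpha(x)\circ\mu(y,z)$ is presumably meant, and \eqref{eq:defi:H-anti-pre-Lie Poisson9} has $\mu(z,x\circ y)$ where $\mu(\alpha(z),x\circ y)$ is presumably meant. I would prove the $\alpha$-corrected versions. To locate the right linear combinations quickly, I would first check each identity in the purely even case, where all signs vanish. I would then reinsert signs by the rule that each transposition of homogeneous elements $a,b$ contributes $(-1)^{|a||b|}$.
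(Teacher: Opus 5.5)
Your derivations of \eqref{eq:defi:H-anti-pre-Lie Poisson3}--\eqref{eq:defi:H-anti-pre-Lie Poisson5} are the same as the paper's. Your reading of the typos in \eqref{eq:defi:H-anti-pre-Lie Poisson7} and \eqref{eq:defi:H-anti-pre-Lie Poisson9} is also correct. The gap is in \eqref{eq:defi:H-anti-pre-Lie Poisson6}: the route you describe is circular.

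Take all elements even, $\alpha=\mathrm{id}$, and write $xy$ for $\mu(x,y)$ and $[x,y]=x\circ y-y\circ x$. Then:
\begin{itemize}
\item by \eqref{eq:defi:H-anti-pre-Lie Poisson1}, the left side of \eqref{eq:defi:H-anti-pre-Lie Poisson5} is $-2z[x,y]$;
\item by \eqref{eq:defi:H-anti-pre-Lie Poisson2}, its right side is $(zy)\circ x-(zx)\circ y-z[x,y]$;
\item \eqref{eq:defi:H-anti-pre-Lie Poisson4} at $(x,y,z)$ and $(y,x,z)$ gives $(zx)\circ y-(zy)\circ x=y(x\circ z)-x(y\circ z)-z[x,y]$.
\end{itemize}
Combining these leaves only $2z[x,y]=y(x\circ z)-x(y\circ z)$, which is \eqref{eq:defi:H-anti-pre-Lie Poisson1} itself. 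The relation is already antisymmetric in $x,y$, so subtracting the $x\leftrightarrow y$ version adds nothing.

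This outcome is forced, not a slip. Each of the six instances of \eqref{eq:defi:H-anti-pre-Lie Poisson2} contains a different term $\mu(\alpha(c),a\circ b)$. The only combination of them in which all $\alpha(a)\circ\mu(b,c)$ and $\mu(a,b)\circ\alpha(c)$ terms cancel is (up to scalar and Koszul signs) the alternating sum over all six permutations. You only use the four instances with $x$ or $y$ in the first slot. So the only relation among the terms $\mu(\alpha(a),b\circ c)$ you can reach is \eqref{eq:defi:H-anti-pre-Lie Poisson1} at $(x,y,z)$. That is not \eqref{eq:defi:H-anti-pre-Lie Poisson6}, which has no $\mu(\alpha(z),\cdot)$ term.

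The missing idea is that \eqref{eq:defi:H-anti-pre-Lie Poisson6} follows from \eqref{eq:defi:H-anti-pre-Lie Poisson1} alone, applied at three permutations:
\begin{itemize}
\item write \eqref{eq:defi:H-anti-pre-Lie Poisson1} at $(x,z,y)$ and $(y,z,x)$;
\item use $\mu(a\circ b,\alpha(c))=(-1)^{|c|(|a|+|b|)}\mu(\alpha(c),a\circ b)$ to solve these for $\mu(\alpha(z),x\circ y)$ and $\mu(\alpha(z),y\circ x)$ in terms of $\mu(\alpha(x),\cdot)$ and $\mu(\alpha(y),\cdot)$;
\item substitute into \eqref{eq:defi:H-anti-pre-Lie Poisson1} at $(x,y,z)$.
\end{itemize}
The result is $3$ times \eqref{eq:defi:H-anti-pre-Lie Poisson6}, and you divide by $3$ (characteristic $0$). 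This is exactly the paper's computation.

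The gap also affects your later steps.
\begin{itemize}
\item Once \eqref{eq:defi:H-anti-pre-Lie Poisson6} is in hand, \eqref{eq:defi:H-anti-pre-Lie Poisson7} is just \eqref{eq:defi:H-anti-pre-Lie Poisson5} combined with \eqref{eq:defi:H-anti-pre-Lie Poisson6}; no relabelling is needed.
\item For \eqref{eq:defi:H-anti-pre-Lie Poisson8}, \eqref{eq:defi:H-anti-pre-Lie Poisson4} is not enough. \eqref{eq:defi:H-anti-pre-Lie Poisson8} contains no term $\mu(\alpha(a),b\circ c)$, and by the remark above no nontrivial combination of instances of \eqref{eq:defi:H-anti-pre-Lie Poisson2} (hence of \eqref{eq:defi:H-anti-pre-Lie Poisson3} or \eqref{eq:defi:H-anti-pre-Lie Poisson4}) is free of such terms. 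So \eqref{eq:defi:H-anti-pre-Lie Poisson1} must enter. After your rotated use of \eqref{eq:defi:H-anti-pre-Lie Poisson3}, the super-symmetry in $x,y$ you need is precisely \eqref{eq:defi:H-anti-pre-Lie Poisson7}. The paper uses an instance of \eqref{eq:defi:H-anti-pre-Lie Poisson4} followed by \eqref{eq:defi:H-anti-pre-Lie Poisson7}.
\item The first half of your attempted argument for \eqref{eq:defi:H-anti-pre-Lie Poisson6} is useful. Substituting \eqref{eq:defi:H-anti-pre-Lie Poisson1} into the left side of \eqref{eq:defi:H-anti-pre-Lie Poisson5} and applying super-commutativity gives exactly the $\alpha$-corrected \eqref{eq:defi:H-anti-pre-Lie Poisson9}. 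So \eqref{eq:defi:H-anti-pre-Lie Poisson9} follows directly from \eqref{eq:defi:H-anti-pre-Lie Poisson1} and \eqref{eq:defi:H-anti-pre-Lie Poisson5}, without \eqref{eq:defi:H-anti-pre-Lie Poisson6}. This is shorter than the paper's route via \eqref{eq:defi:H-anti-pre-Lie Poisson3} and \eqref{eq:defi:H-anti-pre-Lie Poisson8}.
\end{itemize}
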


\begin{proof}
Let $x,y,z\in\mathcal H(\mathcal A)$. Then we have
\begin{enumerate}
\item Eq.~(\ref{eq:defi:H-anti-pre-Lie Poisson3}) holds by exchanging $y$ and $z$ in Eq.~(\ref{eq:defi:H-anti-pre-Lie Poisson2}).

\item Eq.~(\ref{eq:defi:H-anti-pre-Lie Poisson4}) follows from Eqs.~(\ref{eq:defi:H-anti-pre-Lie Poisson2}) and (\ref{eq:defi:H-anti-pre-Lie Poisson3}).

\item Eq.~(\ref{eq:defi:H-anti-pre-Lie Poisson5}) follows from Eq.~(\ref{eq:defi:H-anti-pre-Lie Poisson3}).

\item Eq.~(\ref{eq:defi:H-anti-pre-Lie Poisson6}) holds since
\begin{eqnarray*}
(-1)^{|x||y|}\mu(\alpha(y),(x\circ z))-\mu(\alpha(x),(y\circ z))&\overset{(\ref{eq:defi:H-anti-pre-Lie Poisson1})}{=}&2\mu((x\circ y),\alpha(z))-2(-1)^{|x||y|}\mu((y\circ x),\alpha(z))\\
&\overset{(\ref{eq:defi:H-anti-pre-Lie Poisson1})}{=}&4(-1)^{|x||y|}\mu(\alpha(y),(x\circ z))-4(-1)^{|x|(|y|+|z|)}\mu(\alpha(y),(z\circ x))\\&&+2(-1)^{|y||z|}\mu(\alpha(x),(z\circ y))-4\mu(\alpha(x),(y\circ z))\\&&+4(-1)^{|y||z|}\mu(\alpha(x),(z\circ y))-2(-1)^{|x|(|y|+|z|)}\mu(\alpha(y),(z\circ x))\\
&=&4(-1)^{|x||y|}\mu(\alpha(y),(x\circ z))-4\mu(\alpha(x),(y\circ z))\\&&-6(-1)^{|x|(|y|+|z|)}\mu(\alpha(y),(z\circ x))+6(-1)^{|y||z|}\mu(\alpha(x),(z\circ y)).
\end{eqnarray*}

\item
Eq.~(\ref{eq:defi:H-anti-pre-Lie Poisson7}) follows from
Eqs.~(\ref{eq:defi:H-anti-pre-Lie Poisson5}) and
(\ref{eq:defi:H-anti-pre-Lie Poisson6}).

\item Eq.~(\ref{eq:defi:H-anti-pre-Lie Poisson8}) holds  since
\begin{eqnarray*}
(-1)^{|x|(|y|+|z|)}\mu(y,z)\circ\alpha(x)-(-1)^{|z|(|x|+|y|)}\mu(z,x)\circ \alpha(y)&\overset{(\ref{eq:defi:H-anti-pre-Lie Poisson4})}{=}&(-1)^{|y||z|}\mu(\alpha(x),(z\circ y))\\&&-(-1)^{|x|(|y|+|z|)}\mu(\alpha(y),(z\circ x))
\\&\overset{(\ref{eq:defi:H-anti-pre-Lie Poisson7})}{=}&(-1)^{|x||y|}\alpha(y)\circ\mu(x,
z)-\alpha(x)\circ\mu(y,z). 
\end{eqnarray*}
\item Eq.~(\ref{eq:defi:H-anti-pre-Lie Poisson9}) holds since
\small{\begin{eqnarray*}
&&\alpha(x)\circ\mu(y,z)-(-1)^{|x|(|y|+|z|)}\alpha(y)\circ\mu(z,x)-(-1)^{|z|(|x|+|y|)}\mu(\alpha(z),(x\circ y))+(-1)^{|x|(|y|+|z|)+|y||z|}\mu(\alpha(z),(y\circ x))
\\&&\overset{(\ref{eq:defi:H-anti-pre-Lie Poisson3})}{=}\alpha(x)\circ\mu(y,z)-(-1)^{|x|(|y|+|z|)}\alpha(y)\circ\mu(z, x)+(-1)^{|z|(|x|+|y|)}\mu(z,x)\circ\alpha(y)-2(-1)^{|y||z|}\alpha(x)\circ\mu(z,y)\\&&-(-1)^{|x|(|y|+|z|)+|y||z|}\mu(z,y)\circ \alpha(x)+2(-1)^{|x|(|y|+|z|)}\alpha(y)\circ\mu(z,x)\\
&&=(-1)^{|x|(|y|+|z|)}\alpha(y)\circ\mu(z,x)+(-1)^{|z|(|x|+|y|)}\mu(z,x)\circ\alpha(y)-\alpha(x)\circ\mu(y,z)-(-1)^{|x|(|y|+|z|)+|y||z|}\mu(z,y)\circ \alpha(x)\\&&\overset{(\ref{eq:defi:H-anti-pre-Lie Poisson8})}{=}0.
\end{eqnarray*}}
\end{enumerate}
Thus the conclusion holds.
\end{proof}

The following conclusion extends the correspondence between
Hom-Novikov superalgebras and admissible Hom-Novikov superalgebras to the level of
Poisson type structures.

\begin{prop}\label{pro:from admissible Novikov Poisson to Novikov Poisson}
Let $(\mathcal A,\mu,\alpha)$ be a super-commutative Hom-associative superalgebra and $\circ:\mathcal A\times\mathcal A\rightarrow\mathcal A$ be an even bilinear operation.
Let $(\mathcal A,\star,\alpha)$ be the
$(-2)$-Hom-superalgebra of $(\mathcal A,\circ,\alpha)$. Then
Eqs.~(\ref{eq:defi:H-anti-pre-Lie Poisson1}) and
(\ref{eq:defi:H-anti-pre-Lie Poisson2}) hold if and only if
Eqs.~(\ref{eq:defi:H-Novikov Poisson superalgebra1}) and
(\ref{eq:defi:H-Novikov Poisson superalgebra2}) hold. In particular,
$(\mathcal A,\mu,\circ,\alpha)$ is an admissible Hom-Novikov-Poisson superalgebra if and only if
$(\mathcal A,\mu,\star,\alpha)$ is a Novikov-Hom-Poisson superalgebra.
\end{prop}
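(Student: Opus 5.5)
The plan is to work entirely at the level of the two pairs of identities, using the substitution $x\star y=x\circ y-2(-1)^{|x||y|}y\circ x$ and its inverse, Eq.~(\ref{eq:pro:admissible Novikov algebras and Novikov algebras1}), namely $x\circ y=-\frac13 x\star y+\frac23(-1)^{|x||y|}y\star x$. Super-commutativity of $\mu$ and Hom-associativity $\mu(\mu(x,y),\alpha(z))=\mu(\alpha(x),\mu(y,z))$ will be used freely, so that terms like $\mu(z,x)\circ\alpha(y)$ and $\mu(\alpha(z),x\circ y)$ can be put in a normal form.

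For the direction ``$\Rightarrow$'', I assume Eqs.~(\ref{eq:defi:H-anti-pre-Lie Poisson1}) and (\ref{eq:defi:H-anti-pre-Lie Poisson2}). Then all consequences (\ref{eq:defi:H-anti-pre-Lie Poisson3})--(\ref{eq:defi:H-anti-pre-Lie Poisson9}) of Lemma~\ref{lem:H-anti-pre-Lie Poisson} are available.

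To obtain (\ref{eq:defi:H-Novikov Poisson superalgebra1}), I expand
\[
\mu(x,y)\star\alpha(z)-\mu(\alpha(x),y\star z)
\]
into $\circ$-terms. This gives
\[
\mu(x,y)\circ\alpha(z)-2(-1)^{\cdots}\alpha(z)\circ\mu(x,y)-\mu(\alpha(x),y\circ z)+2(-1)^{|y||z|}\mu(\alpha(x),z\circ y).
\]
The terms of the form $\alpha(\cdot)\circ\mu(\cdot,\cdot)$ are rewritten via (\ref{eq:defi:H-anti-pre-Lie Poisson3}), which expresses $2\alpha(z)\circ\mu(x,y)$ through $\mu(x,z)\circ\alpha(y)$ and $\mu(\alpha(x),z\circ y)$. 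The remaining combination should then cancel by (\ref{eq:defi:H-anti-pre-Lie Poisson4}) or (\ref{eq:defi:H-anti-pre-Lie Poisson8}).

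To obtain (\ref{eq:defi:H-Novikov Poisson superalgebra2}), I expand both sides. The left side is $\mu([x,y]_\star,\alpha(z))$, and $[x,y]_\star=-[x,y]_\circ$. The right side involves $\alpha(x)\circ\mu(y,z)$ and $\mu(y,z)\circ\alpha(x)$ with the antisymmetrization in $x,y$. I then match the result against (\ref{eq:defi:H-anti-pre-Lie Poisson1}), (\ref{eq:defi:H-anti-pre-Lie Poisson5}) and (\ref{eq:defi:H-anti-pre-Lie Poisson9}): the first turns $\mu([x,y]_\circ,\alpha(z))$ into a $\mu(\alpha(\cdot),\cdot\circ z)$ expression, and the latter two convert that into $\alpha(\cdot)\circ\mu(\cdot,\cdot)$ terms.

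For the converse, I assume (\ref{eq:defi:H-Novikov Poisson superalgebra1}) and (\ref{eq:defi:H-Novikov Poisson superalgebra2}) and substitute the inverse formula for $\circ$ in terms of $\star$. Equation (\ref{eq:defi:H-anti-pre-Lie Poisson1}) becomes a linear combination of $\mu([x,y]_\star,\alpha(z))$, $\mu(\alpha(x),y\star z)$ and $\mu(\alpha(x),z\star y)$ terms. The first is handled by (\ref{eq:defi:H-Novikov Poisson superalgebra2}). For the others, (\ref{eq:defi:H-Novikov Poisson superalgebra1}) gives $\mu(\alpha(x),y\star z)=\mu(x,y)\star\alpha(z)$, and super-commutativity of $\mu$ then lets me swap $x$ and $y$ inside it. Equation (\ref{eq:defi:H-anti-pre-Lie Poisson2}) is treated the same way: every $\mu(\cdot,\cdot)\star\alpha(\cdot)$ term is collapsed via (\ref{eq:defi:H-Novikov Poisson superalgebra1}), and the terms $\alpha(x)\star\mu(y,z)$ are grouped using (\ref{eq:defi:H-Novikov Poisson superalgebra2}) with suitable permutations of the variables.

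The ``in particular'' part then follows by combining this equivalence with Proposition~\ref{pro:admissible Novikov algebras and Novikov algebras} and Proposition~\ref{pro:anti-pre-Lie algebras from Novikov algebras}. These say that $(\mathcal A,\circ,\alpha)$ is an admissible Hom-Novikov superalgebra if and only if its $(-2)$-Hom-superalgebra $(\mathcal A,\star,\alpha)$ is Hom-Novikov. The two constructions are mutually inverse, since the $2$-superalgebra of the $(-2)$-superalgebra is the original product up to the invertible rescaling recorded in Eq.~(\ref{eq:pro:admissible Novikov algebras and Novikov algebras1}).

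The main obstacle is the bookkeeping in the converse direction, in particular deriving (\ref{eq:defi:H-anti-pre-Lie Poisson2}). Here the Novikov-Poisson axioms give only antisymmetrized information on $\alpha(x)\star\mu(y,z)$, while (\ref{eq:defi:H-anti-pre-Lie Poisson2}) is not antisymmetric. I would first try to derive from (\ref{eq:defi:H-Novikov Poisson superalgebra1}) and Hom-associativity an auxiliary identity expressing $\alpha(x)\star\mu(y,z)$ as $\mu(\alpha(y),x\star z)$ plus a term $\mu(x\star y,\alpha(z))$, mirroring (\ref{eq:defi:H-anti-pre-Lie Poisson3}). All signs would then be tracked by fixing the normal form $(-1)^{|x||y|+\cdots}$ relative to the ordering $x,y,z$.
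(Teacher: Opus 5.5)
Your overall route is the paper's: for the forward direction, expand $x\star y=x\circ y-2(-1)^{|x||y|}y\circ x$ and use Lemma~\ref{lem:H-anti-pre-Lie Poisson}; for the converse, substitute the inverse and use \eqref{eq:defi:H-Novikov Poisson superalgebra1} and \eqref{eq:defi:H-Novikov Poisson superalgebra2}. However, two formulas in your set-up are wrong, and each one makes a step fail.

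\textbf{The inverse formula.} Equation \eqref{eq:pro:admissible Novikov algebras and Novikov algebras1} expresses $\star$ through $\circ$ when $\circ$ is the $2$-superalgebra of $\star$. The $(-2)$-superalgebra \eqref{eq:pro:admissible Novikov algebras and Novikov algebras2} is $-3$ times that operation. From $x\star y+2(-1)^{|x||y|}y\star x=-3\,x\circ y$, the correct inverse is
\[
x\circ y=-\tfrac13\bigl(x\star y+2(-1)^{|x||y|}y\star x\bigr).
\]
Your formula has the wrong sign on $y\star x$. For example, take $x\circ y=xy'+2x'y$ on $\mathbb K[t]$ with $\alpha=\mathrm{id}$. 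Then $x\star y=-3xy'$, but your formula returns $xy'-2x'y$.

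\textbf{The commutator.} You have $[x,y]_\star=3[x,y]_\circ$, not $-[x,y]_\circ$. The relation $[x,y]_\circ=-[x,y]_\star$ belongs to the $2$-superalgebra of Proposition~\ref{pro:anti-pre-Lie algebras from Novikov algebras}. The coefficient matters. Write $[x,y]_\star=c\,[x,y]_\circ$ and apply \eqref{eq:defi:H-anti-pre-Lie Poisson1}, \eqref{eq:defi:H-anti-pre-Lie Poisson5} and \eqref{eq:defi:H-anti-pre-Lie Poisson8}. The two sides of \eqref{eq:defi:H-Novikov Poisson superalgebra2} then differ by $(c-3)\,\mu([x,y]_\circ,\alpha(z))$. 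Your value $c=-1$ leaves a nonzero residue; in the example above, $[1,t]_\circ=-1$.

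\textbf{Effect on the converse.} With your inverse, the defect of \eqref{eq:defi:H-anti-pre-Lie Poisson1} comes out as $-\tfrac43\mu([x,y]_\star,\alpha(z))$. This is already nonzero for $x\star y=xy'$.

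\textbf{A smaller point.} In the forward proof of \eqref{eq:defi:H-Novikov Poisson superalgebra1}, it is \eqref{eq:defi:H-anti-pre-Lie Poisson8} that finishes the cancellation, not \eqref{eq:defi:H-anti-pre-Lie Poisson4}. The latter follows from \eqref{eq:defi:H-anti-pre-Lie Poisson2} alone and so carries no information from \eqref{eq:defi:H-anti-pre-Lie Poisson1}.

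\textbf{Your ``main obstacle''.} With the correct inverse it disappears, and the auxiliary identity you plan to derive should be dropped, because it is false. Up to Koszul signs, $-3$ times the defect of \eqref{eq:defi:H-anti-pre-Lie Poisson2} is
\[
2\alpha(x)\star\mu(y,z)+4\mu(y,z)\star\alpha(x)-\mu(z,x)\star\alpha(y)-2\alpha(y)\star\mu(z,x)-\mu(\alpha(z),x\star y)-2\mu(\alpha(z),y\star x).
\]
By \eqref{eq:defi:H-Novikov Poisson superalgebra1}, $\mu(y,z)\star\alpha(x)$ becomes $\mu(\alpha(z),y\star x)$ and $\mu(z,x)\star\alpha(y)$ becomes $\mu(\alpha(z),x\star y)$. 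What remains is $2\bigl(\alpha(x)\star\mu(y,z)-\alpha(y)\star\mu(x,z)\bigr)-2\mu(\alpha(z),[x,y]_\star)$, which vanishes by \eqref{eq:defi:H-Novikov Poisson superalgebra2}. This is exactly the paper's converse. Likewise, \eqref{eq:defi:H-anti-pre-Lie Poisson1} follows from \eqref{eq:defi:H-Novikov Poisson superalgebra1} and super-commutativity alone, and Hom-associativity is never needed.

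\textbf{The auxiliary identity.} No identity of the form $\alpha(x)\star\mu(y,z)=a\,\mu(\alpha(y),x\star z)+b\,\mu(x\star y,\alpha(z))$ follows from the Hom-Novikov-Poisson axioms. Take $x\star y=x(y'+\lambda y)$ on $\mathbb K[t]$ with $\lambda\neq0$ and $\alpha=\mathrm{id}$. It satisfies both axioms; this is a direct check, or the paper's admissible-pair construction with $(\mathcal P,\mathcal Q)=(d/dt,\,d/dt+\lambda\,\mathrm{id})$. Yet $x=y=1$, $z=t$ forces $(a,b)=(1,0)$, while $x=z=1$, $y=t$ forces $(a,b)=(0,1)$.

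\textbf{The ``in particular'' step.} This is fine once you record two facts: the $2$-superalgebra of $\star$ is $-3\circ$, and the admissible Hom-Novikov identities are homogeneous of degree two in $\circ$.
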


\begin{proof}Suppose that Eqs.~(\ref{eq:defi:H-anti-pre-Lie Poisson1}) and
(\ref{eq:defi:H-anti-pre-Lie Poisson2}) hold. Then for all $x,y,z\in\mathcal H(\mathcal A)$, we have
\begin{align*}
\mu(x,y)\star\alpha(z)-\mu(\alpha(x),(y\star z))&=\mu(x,y)\circ\alpha(z)-2(-1)^{|z|(|x|+|y|)}\alpha(z)\circ\mu(x,y)-\mu(\alpha(x),(y\circ z))\\&+2(-1)^{|y||z|}\mu(x,(z\circ y))\\
&\overset{(\ref{eq:defi:H-anti-pre-Lie Poisson2})}{=}
2(-1)^{|x|(|y|+|z|)}\alpha(y)\circ\mu(z,x)-2(-1)^{|z|(|x|+|y|)}\alpha(z)\circ\mu(x,y)\\&-2\mu(x,(y\circ z))+2(-1)^{|y||z|}\mu(\alpha(x),(z\circ y))\\
&\overset{(\ref{eq:defi:H-anti-pre-Lie Poisson1})}{=}
2(-1)^{|x|(|y|+|z|)}\alpha(y)\circ\mu(z,x)-2(-1)^{|z|(|x|+|y|)}\alpha(z)\circ\mu(x,y)\\&+(-1)^{|x|(|y|+|z|)}\mu(\alpha(y),(z\circ x))-(-1)^{|x|(|y|+|z|)+|y||z|}\mu(\alpha(z),(y\circ x))\\
&\overset{(\ref{eq:defi:H-anti-pre-Lie Poisson3})}{=}
(-1)^{|x|(|y|+|z|)+|y||z|}\mu(z,y)\circ\alpha(x)+(-1)^{|x|(|y|+|z|)+|y||z|}\mu(\alpha(z),(y\circ x))\\&-(-1)^{|x|(|y|+|z|)}\mu(y,z)\circ \alpha(x)-(-1)^{|x|(|y|+|z|)}\mu(\alpha(y),(z\circ x))\\&+(-1)^{|x|(|y|+|z|)}\mu(\alpha(y),(z\circ x))-(-1)^{|x|(|y|+|z|)+|y||z|}\mu(\alpha(z),(y\circ x))\\&=0,
\end{align*}
\begin{eqnarray*}
&&2\mu((x\star y),\alpha(z))-2(-1)^{|x||y|}\mu((y\star x),\alpha(z))+2(-1)^{|x||y|}\alpha(y)\star\mu(x,z)-2\alpha(x)\star\mu(y,z)\\
&&=6\mu((x\circ y),\alpha(z))-6(-1)^{|x||y|}\mu((y\circ x),\alpha(z))+2(-1)^{|x||y|}\alpha(y)\circ\mu(x,z)-4(-1)^{|y||z|}\mu(x,z)\circ\alpha(y)\\&&\qquad-2\alpha(x)\circ\mu(y,z)+4(-1)^{|x|(|y|+|z|)}\mu(y,z)\circ\alpha(x)\\
&&\overset{(\ref{eq:defi:H-anti-pre-Lie
Poisson1}),(\ref{eq:defi:H-anti-pre-Lie Poisson3})}{=}
3(-1)^{|x||y|}\mu(\alpha(y),(x\circ z))-3\mu(\alpha(x),(y\circ z))+\mu(x,y)\circ\alpha(z)+\mu(\alpha(x),(y\circ z))\\&&\qquad-4(-1)^{|y||z|}\mu(x,z)\circ\alpha(y)-(-1)^{|x||y|}\mu(y,x)\circ\alpha(z)\\
&&\qquad-(-1)^{|x||y|}\mu(\alpha(y),(x\circ z))+4(-1)^{|x|(|y|+|z|)}\mu(y,z)\circ\alpha(x)\\
&&=2(-1)^{|x||y|}\mu(\alpha(y),(x\circ z))-2\mu(\alpha(x),(y\circ z))+4(-1)^{|x|(|y|+|z|)}\mu(y,z)\circ\alpha(x)-4(-1)^{|y||z|}\mu(x,z)\circ\alpha(y)\\
&&\overset{(\ref{eq:defi:H-anti-pre-Lie Poisson5})}{=}
4\alpha(x)\circ\mu(y,z)-4(-1)^{|x||y|}\alpha(y)\circ\mu(x,z)+4(-1)^{|x|(|y|+|z|)}\mu(y,z)\circ\alpha(x)-4(-1)^{|y||z|}\mu(x,z)\circ \alpha(y)\\&&\overset{(\ref{eq:defi:H-anti-pre-Lie Poisson8})}{=}0.
\end{eqnarray*}

Hence Eqs.~(\ref{eq:defi:H-Novikov Poisson superalgebra1}) and
(\ref{eq:defi:H-Novikov Poisson superalgebra2}) hold. Conversely, suppose
that Eqs.~(\ref{eq:defi:H-Novikov Poisson superalgebra1}) and
(\ref{eq:defi:H-Novikov Poisson superalgebra2}) hold. Then a similar
argument gives Eqs.~(\ref{eq:defi:H-anti-pre-Lie Poisson1}) and
(\ref{eq:defi:H-anti-pre-Lie Poisson2}). Indeed: for any $x,y,z\in\mathcal H(\mathcal A)$. We have
\begin{eqnarray*}
&&2(x\circ y)\cdot z-2(y\circ x)\cdot z+x\cdot(y\circ z)-y\cdot(x\circ z)\\
&&=-\frac{1}{3}(2(x\star y)\cdot z+4(y\star x)\cdot z-2(y\star x)\cdot z-4(x\star y)\cdot z+x\cdot(y\star z)+2x\cdot(z\star y)\\
&&\hspace{0.4cm}-y\cdot(x\star z)-2y\cdot(z\star x))\\
&&\overset{(\ref{eq:defi:H-Novikov Poisson superalgebra1})}{=}
-\frac{1}{3}(2(y\star x)\cdot z-2(x\star y)\cdot z+2x\cdot(z\star y)-2y\cdot(z\star x))\\
&&\overset{(\ref{eq:defi:H-Novikov Poisson superalgebra1})}{=}
-\frac{1}{3}(2(y\star x)\cdot z-2(x\star y)\cdot z+2z\cdot(x\star y)-2z\cdot(y\star x))=0,\\
&&2x\circ(y\cdot z)-(z\cdot x)\circ y-z\cdot(x\circ y)\\
&&=-\frac{1}{3}(2x\star(y\cdot z)+4(y\cdot z)\star x-(z\cdot x)\star y-2y\star(z\cdot x)-z\cdot(x\star y)-2z\cdot(y\star x))\\
&&\overset{(\ref{eq:defi:H-Novikov Poisson superalgebra1})}{=}
-\frac{1}{3}(2x\star(y\cdot z)+2z\cdot(y\star x)-2z\cdot(x\star y)-2y\star(z\cdot x))\overset{(\ref{eq:defi:H-Novikov Poisson superalgebra2})}{=}0.
\end{eqnarray*}
Hence Eqs.~(\ref{eq:defi:H-anti-pre-Lie Poisson1}) and
(\ref{eq:defi:H-anti-pre-Lie Poisson2}) hold.
\end{proof}



\begin{prop}
Let $(\mathcal P,\mathcal Q)$ be an admissible pair on a super-commutative Hom-associative
superalgebra $(\mathcal A,\mu,\alpha)$.
\begin{enumerate}
\item \label{it:1} Let $(\mathcal A,\star,\alpha)$ be the Hom-Novikov superalgebra given
by Eq.~(\ref{eq:ex:Novikov algebra from admissible pair}) in
Proposition \ref{ex:Novikov algebra from admissible pair}. Then
$(\mathcal A,\mu,\star,\alpha)$ is a Hom-Novikov-Poisson superalgebra.
\item \label{it:2} Let $(\mathcal A,\circ,\alpha)$ be the admissible Hom-Novikov
superalgebra given by Eq.~(\ref{eq:cor:admissible Novikov algebra from
admissible pair}) in Corollary \ref{cor:admissible Novikov algebra
from admissible pair}. Then $(\mathcal A,\mu,\circ,\alpha)$ is an admissible
Hom-Novikov-Poisson superalgebra.
\end{enumerate}
\end{prop}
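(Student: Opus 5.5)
The plan is to prove part \eqref{it:1} by direct verification of Eqs.~\eqref{eq:defi:H-Novikov Poisson superalgebra1} and \eqref{eq:defi:H-Novikov Poisson superalgebra2}, and then to obtain part \eqref{it:2} from part \eqref{it:1} via Proposition~\ref{pro:from admissible Novikov Poisson to Novikov Poisson}.

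\textbf{Part \eqref{it:1}.} By Proposition~\ref{ex:Novikov algebra from admissible pair}, $(\mathcal A,\star,\alpha)$ with $x\star y=\mu(x,\mathcal Q(y))$ is already a Hom-Novikov superalgebra. It therefore remains to check Eqs.~\eqref{eq:defi:H-Novikov Poisson superalgebra1} and \eqref{eq:defi:H-Novikov Poisson superalgebra2}.

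For Eq.~\eqref{eq:defi:H-Novikov Poisson superalgebra1}, Hom-associativity and $\mathcal Q\alpha=\alpha\mathcal Q$ give
\[
\mu(x,y)\star\alpha(z)=\mu(\mu(x,y),\mathcal Q(\alpha(z)))=\mu(\alpha(x),\mu(y,\mathcal Q(z)))=\mu(\alpha(x),y\star z).
\]

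For Eq.~\eqref{eq:defi:H-Novikov Poisson superalgebra2}:
\begin{enumerate}
\item On the right-hand side, expand $\alpha(x)\star\mu(y,z)=\mu(\alpha(x),\mathcal Q(\mu(y,z)))$ using Eq.~\eqref{eq:defi:admissible pair2}. This gives $\mu(\alpha(x),\mu(\mathcal Q(y),z))+\mu(\alpha(x),\mu(y,\mathcal P(z)))$.
\item Do the same for the term with $x$ and $y$ interchanged.
\item The two $\mathcal P$-terms $\mu(\alpha(x),\mu(y,\mathcal P(z)))$ and $(-1)^{|x||y|}\mu(\alpha(y),\mu(x,\mathcal P(z)))$ coincide after applying super-commutativity and Hom-associativity. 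Indeed both equal $\mu(\mu(x,y),\alpha(\mathcal P(z)))$ up to the sign, so they cancel.
\item What remains is $\mu(\alpha(x),\mu(\mathcal Q(y),z))-(-1)^{|x||y|}\mu(\alpha(y),\mu(\mathcal Q(x),z))$. Hom-associativity rewrites this as $\mu(\mu(x,\mathcal Q(y)),\alpha(z))-(-1)^{|x||y|}\mu(\mu(y,\mathcal Q(x)),\alpha(z))$, which is exactly the left-hand side.
\end{enumerate}

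\textbf{Part \eqref{it:2}.} Corollary~\ref{cor:admissible Novikov algebra from admissible pair} was presumably obtained from the 2-Hom-superalgebra of $\star$. I will first confirm that the operation in Eq.~\eqref{eq:cor:admissible Novikov algebra from admissible pair} is exactly the $2$-Hom-superalgebra of $\star$. This holds because $x\star y+2(-1)^{|x||y|}y\star x=\mu(x,\mathcal Q(y))+2(-1)^{|x||y|}\mu(y,\mathcal Q(x))$, and super-commutativity turns the last term into $2\mu(\mathcal Q(x),y)$.

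Inverting, $\star$ is the $(-2)$-Hom-superalgebra of $\circ$ by Eq.~\eqref{eq:pro:admissible Novikov algebras and Novikov algebras1}, up to the rescaling implicit in the paper's convention. Proposition~\ref{pro:from admissible Novikov Poisson to Novikov Poisson} then says that Eqs.~\eqref{eq:defi:H-anti-pre-Lie Poisson1} and \eqref{eq:defi:H-anti-pre-Lie Poisson2} for $\circ$ are equivalent to Eqs.~\eqref{eq:defi:H-Novikov Poisson superalgebra1} and \eqref{eq:defi:H-Novikov Poisson superalgebra2} for $\star$, and the latter hold by part \eqref{it:1}. Since $(\mathcal A,\circ,\alpha)$ is an admissible Hom-Novikov superalgebra by Corollary~\ref{cor:admissible Novikov algebra from admissible pair}, it is in particular Hom-anti-pre-Lie by Proposition~\ref{H-admissible is H-anti-pre-Lie}. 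Hence $(\mathcal A,\mu,\circ,\alpha)$ is an admissible Hom-Novikov-Poisson superalgebra.

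\textbf{Main obstacle.} I expect the main difficulty to be the bookkeeping between the $2$- and $(-2)$-superalgebra conventions. The stated inverse \eqref{eq:pro:admissible Novikov algebras and Novikov algebras1} differs from \eqref{eq:pro:admissible Novikov algebras and Novikov algebras2} by the scalar $-\tfrac13$. I would check that the Poisson identities are homogeneous in the Novikov product, so rescaling $\star$ by a nonzero constant preserves them. If that check is doubtful, the fallback is to verify Eqs.~\eqref{eq:defi:H-anti-pre-Lie Poisson1} and \eqref{eq:defi:H-anti-pre-Lie Poisson2} directly: substitute the formula for $\circ$, expand $\mathcal Q$ of products by Eq.~\eqref{eq:defi:admissible pair2}, and cancel using Hom-associativity and super-commutativity, tracking the Koszul signs carefully.
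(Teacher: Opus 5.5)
Your proposal is correct and follows the same route as the paper. Part (1) is a direct verification of the two Hom-Novikov-Poisson identities using Eq.~\eqref{eq:defi:admissible pair2}, Hom-associativity and super-commutativity, and part (2) follows from part (1) via Proposition~\ref{pro:from admissible Novikov Poisson to Novikov Poisson}. You also spell out a rescaling step the paper leaves implicit: the $(-2)$-Hom-superalgebra of $\circ$ is $x\circ y-2(-1)^{|x||y|}y\circ x=-3\,x\star y$, which is harmless because both identities are linear in the Novikov product.
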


\begin{proof}
(\ref{it:1}). Let $x,y,z\in\mathcal H(\mathcal A)$. Then we have
\begin{eqnarray*}
\mu(x,y)\star\alpha(z)=\mu(\mu(x,y),\mathcal Q(\alpha(z)))&=&\mu(\alpha(x),(y\star z)), \\
\alpha(x)\star\mu(y,z)-(-1)^{|x||y|}\alpha(y)\star\mu(x,z)&=&\mu(\alpha(x),\mathcal Q(\mu(y,z)))-(-1)^{|x||y|}\mu(\alpha(y),\mathcal Q(\mu(x,z)))\\&=&\mu(\mu(x,\mathcal Q(y)),\alpha(z))-(-1)^{|x||y|}\mu(\mu(y,\mathcal Q(x)),\alpha(z))\\&=&\mu((x\star y),\alpha(z))-(-1)^{|x||y|}\mu((y\star x),\alpha(z)).
\end{eqnarray*}
Hence Eqs.~(\ref{eq:defi:H-Novikov Poisson superalgebra1}) and
(\ref{eq:defi:H-Novikov Poisson superalgebra2}) hold.

(\ref{it:2}). It follows from Item~(\ref{it:1}) and Proposition \ref{pro:from admissible Novikov Poisson to Novikov Poisson}.
\end{proof}

\begin{exa}
Let $P$ be an even derivation on a super-commutative Hom-associative superalgebra
$(\mathcal A,\mu,\alpha)$. Define two bilinear operations $\star,\circ:\mathcal A\times\mathcal
A\rightarrow\mathcal A$ respectively by
\begin{equation*}
x\star y=\mu(x,P(y))+\mu(a,\mu(x,y)),\;\; \forall x,y\in\mathcal H(\mathcal A),
\end{equation*}
\begin{equation*}
x\circ y=\mu(x,P(y))+2\mu(P(x), y)+\mu(a,\mu(x,y)),\;\;\forall
x,y\in\mathcal H(\mathcal A), \end{equation*} where $a\in {\mathbb K}$ or $a\in \mathcal A$.
Then $(\mathcal A,\mu,\star,\alpha)$ is a Hom-Novikov-Poisson superalgebra and
$(\mathcal A,\mu,\circ,,\alpha)$ is an admissible Hom-Novikov-Poisson superalgebra.
\end{exa}


Next we consider the structure of the sub-adjacent Hom-Lie superalgebras of the Hom-anti-pre-Lie superalgebras in Hom-anti-pre-Lie Poisson superalgebras.

\begin{df}\label{defi:transposed Poisson algebra}
A \textbf{transposed Hom-Poisson superalgebra} is a quadruple
$(\mathcal A,\mu,[\cdot,\cdot],\alpha)$, where $(\mathcal A,\mu,\alpha)$ is a super-commutative
Hom-associative superalgebra, and $(\mathcal A,[\cdot,\cdot],\alpha)$ is a Hom-Lie superalgebra
satisfying the following equation:
\begin{equation}\label{eq:defi:transposed Poisson algebra}
2\mu(\alpha(z),[x,y])
=[\mu(z,x),\alpha(y)]
+(-1)^{|x||z|}[\alpha(x),\mu(z,y)],
\qquad \forall\, x,y,z\in\mathcal H(\mathcal A).
\end{equation}
\end{df}

\begin{exa}\label{Ex:from H-Novikov Poisson superalgebra to transposed H-Poisson superalgebra}
Let $(\mathcal A,\mu,\star,\alpha)$ be a Hom-Novikov-Poisson superalgebra and $(\mathcal A,[\cdot,\cdot],\alpha)$ be the sub-adjacent Hom-Lie superalgebra of $(\mathcal A,\star,\alpha)$. Then $(\mathcal A,\mu,[\cdot,\cdot],\alpha)$ is a transposed Hom-Poisson superalgebra.
\end{exa}

\begin{prop}\label{pro:sub-adjacent transposed H-Poisson superalgebra}
Let $(\mathcal A,\mu,\alpha)$ be a super-commutative Hom-associative superalgebra and
$(\mathcal A,\circ,\alpha)$ be a Hom-Lie-admissible superalgebra such that
Eqs.~(\ref{eq:defi:H-anti-pre-Lie Poisson1}) and
(\ref{eq:defi:H-anti-pre-Lie Poisson2}) hold.
Suppose that $(\mathcal A,[\cdot,\cdot],\alpha)$ is the
 sub-adjacent
Hom-Lie superalgebra of $(\mathcal A,\circ,\alpha)$.
Then $(\mathcal A,\mu,[\cdot,\cdot],\alpha)$ is a transposed Hom-Poisson superalgebra. 
In particular, if $(\mathcal A,\mu,\circ,\alpha)$ is a Hom-anti-pre-Lie Poisson
superalgebra, then $(\mathcal A,\mu,[\cdot,\cdot],\alpha)$ is a transposed Hom-Poisson superalgebra.
\end{prop}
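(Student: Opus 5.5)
Since $(\mathcal A,\circ,\alpha)$ is Hom-Lie-admissible, $(\mathcal A,[\cdot,\cdot],\alpha)$ with $[x,y]=x\circ y-(-1)^{|x||y|}y\circ x$ is already a Hom-Lie superalgebra. The only thing left is the transposed identity \eqref{eq:defi:transposed Poisson algebra}. The \lq\lq in particular\rq\rq{} part then follows at once, because a Hom-anti-pre-Lie superalgebra is Hom-Lie-admissible by Proposition \ref{equiv-H-ant--pre-Lie-Hom-Lie admiss}. So the plan is to expand both sides of \eqref{eq:defi:transposed Poisson algebra} in terms of $\circ$ and $\mu$, and then reduce using Eqs.~\eqref{eq:defi:H-anti-pre-Lie Poisson1}, \eqref{eq:defi:H-anti-pre-Lie Poisson2} and the consequences collected in Lemma \ref{lem:H-anti-pre-Lie Poisson}.

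On the left we have $2\mu(\alpha(z),[x,y])=2\mu(\alpha(z),x\circ y)-2(-1)^{|x||y|}\mu(\alpha(z),y\circ x)$. By super-commutativity of $\mu$ this can be rewritten as $2(-1)^{|z|(|x|+|y|)}\bigl(\mu(x\circ y,\alpha(z))-(-1)^{|x||y|}\mu(y\circ x,\alpha(z))\bigr)$. Eq.~\eqref{eq:defi:H-anti-pre-Lie Poisson1} then turns this into terms of the form $\mu(\alpha(\cdot),\cdot\circ z)$. Alternatively, Eq.~\eqref{eq:defi:H-anti-pre-Lie Poisson9} expresses $\mu(z,x\circ y)-(-1)^{|x||y|}\mu(\alpha(z),y\circ x)$ directly through $\alpha(x)\circ\mu(y,z)$ and $\alpha(y)\circ\mu(x,z)$. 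I would use \eqref{eq:defi:H-anti-pre-Lie Poisson9}, since it converts the left side entirely into terms of the type $\alpha(\cdot)\circ\mu(\cdot,\cdot)$.

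On the right we have $[\mu(z,x),\alpha(y)]+(-1)^{|x||z|}[\alpha(x),\mu(z,y)]$. This expands into four terms: $\mu(z,x)\circ\alpha(y)$, $\alpha(y)\circ\mu(z,x)$, $\alpha(x)\circ\mu(z,y)$ and $\mu(z,y)\circ\alpha(x)$, each with the appropriate sign. Eq.~\eqref{eq:defi:H-anti-pre-Lie Poisson8} relates the symmetric combination $\alpha(x)\circ\mu(y,z)+(-1)^{|x|(|y|+|z|)}\mu(y,z)\circ\alpha(x)$ to the same combination with $x,y$ swapped. That lets me trade the \lq\lq$\mu(\cdot,\cdot)\circ\alpha(\cdot)$\rq\rq{} terms for \lq\lq$\alpha(\cdot)\circ\mu(\cdot,\cdot)$\rq\rq{} terms. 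The terms left over can be handled with Eqs.~\eqref{eq:defi:H-anti-pre-Lie Poisson2} and \eqref{eq:defi:H-anti-pre-Lie Poisson3}, which express $2\alpha(x)\circ\mu(y,z)$ via $\mu(\cdot,\cdot)\circ\alpha(\cdot)$ plus $\mu(\alpha(\cdot),\cdot\circ\cdot)$. After these substitutions both sides should reduce to the same combination of $\alpha(x)\circ\mu(y,z)$, $\alpha(y)\circ\mu(x,z)$, and possibly $\mu(\alpha(z),[x,y])$, which I would then compare.

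The main obstacle is the bookkeeping. Choosing which of \eqref{eq:defi:H-anti-pre-Lie Poisson3}--\eqref{eq:defi:H-anti-pre-Lie Poisson9} to apply to each term is delicate, and so is tracking the Koszul signs, including the occasional missing $\alpha$ in the lemma's formulas, which I would read as $\alpha$-twisted consistently. My first attempt would be to verify the non-graded case, where all signs are $+1$, to fix the linear combination of lemma identities needed. Concretely, I expect
\[
2\mu(z,[x,y])-[\mu(z,x),y]-[x,\mu(z,y)]
\]
to equal $2\times$\eqref{eq:defi:H-anti-pre-Lie Poisson9} minus \eqref{eq:defi:H-anti-pre-Lie Poisson8} with suitable arguments. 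I would then reinsert the signs by the Koszul rule, since every identity used is sign-homogeneous.
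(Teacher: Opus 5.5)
Your plan is correct and is essentially the paper's proof: you expand both brackets in terms of $\circ$ and $\mu$ and kill the result with identities from Lemma~\ref{lem:H-anti-pre-Lie Poisson}. The difference is only in which identities you combine. The paper substitutes Eq.~\eqref{eq:defi:H-anti-pre-Lie Poisson2} into the two $\mu(\cdot,\cdot)\circ\alpha(\cdot)$ terms and gets three times Eq.~\eqref{eq:defi:H-anti-pre-Lie Poisson9}. Your combination is $2\times$\eqref{eq:defi:H-anti-pre-Lie Poisson9} plus $(-1)^{|z|(|x|+|y|)}\times$\eqref{eq:defi:H-anti-pre-Lie Poisson8}, both written as left minus right and with $\mu(\alpha(z),x\circ y)$ read in \eqref{eq:defi:H-anti-pre-Lie Poisson9}; this also checks out with the full Koszul signs.
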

\begin{proof}
Let $x,y,z\in\mathcal H(\mathcal A)$. Then we have
\begin{align*}
&\qquad[\mu(z,x),\alpha(y)]+(-1)^{|x||z|}[\alpha(x),\mu(z,y)]-2\mu(\alpha(z),[x,y])\\&=\mu(z,x)\circ\alpha(y)-(-1)^{|y|(|x|+|z|)}\alpha(y)\circ\mu(z,x)+(-1)^{|x||z|}\alpha(x)\circ\mu(z,y)-(-1)^{|x||y|}\mu(z,y)\circ\alpha(x)\\&\quad-2\mu(\alpha(z),[x,y])\\
&\overset{(\ref{eq:defi:H-anti-pre-Lie Poisson2})}{=}2(-1)^{|z|(|x|+|y|)}\alpha(x)\circ\mu(y,z)-\mu(\alpha(z),(x\circ y))-(-1)^{|y|(|x|+|z|)}\alpha(y)\circ\mu(z,x)\\&\quad+(-1)^{|x||z|}\alpha(x)\circ\mu(z,y)-2(-1)^{|z|(|x|+|y|)+|x||y|}\alpha(y)\circ\mu(x,z)\\&\quad+(-1)^{|x||y|}\mu(\alpha(z),(y\circ x))-2\mu(\alpha(z),(x\circ y))+2(-1)^{|x||y|}\mu(\alpha(z),(y\circ x))\\
&=3((-1)^{|z|(|x|+|y|)}\alpha(x)\circ\mu(y,z)-(-1)^{|y|(|x|+|z|)}\alpha(y)\circ \mu(z,x)-\mu(\alpha(z),(x\circ y))+(-1)^{|x||y|}\mu(\alpha(z),(y\circ x)))\\&\overset{(\ref{eq:defi:H-anti-pre-Lie Poisson9})}{=}0.
\end{align*}
Thus $(\mathcal A,\mu,[\cdot,\cdot],\alpha)$ is a transposed Hom-Poisson superalgebra.
\end{proof}

Conversely, in the nondegenerate case, we have the following conclusion.

\begin{prop}\label{thm:bilinear form on transposed Poisson algebra}
Let $(\mathcal A,\mu,[\cdot,\cdot],\alpha)$ be a transposed Hom-Poisson superalgebra. Suppose
there is a nondegenerate supersymmetric bilinear form $\mathfrak B$ on
$\mathcal A$, such that $\mathfrak B$ is invariant on $(\mathcal A,\mu,\alpha)$ and a
super-commutative $2$-cocycle on $(\mathcal A,[\cdot,\cdot],\alpha)$. Then  $(\mathcal A,\mu,\circ,\alpha)$ is
a Hom-anti-pre-Lie Poisson superalgebra, where the bilinear operation $\circ$
is given by Eq.\eqref{inv-Hom-anti-pre-Lie}.
\end{prop}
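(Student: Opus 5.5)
The plan is to reduce everything to identities for $\mathfrak B$ and then use nondegeneracy. First, Theorem~\ref{ant-h-pr-Lie-from-nondeg-bil} applies: $\mathfrak B$ is a nondegenerate super-commutative $2$-cocycle on $(\mathcal A,[\cdot,\cdot],\alpha)$, where $\alpha$ is assumed multiplicative as in that theorem. So Eq.~\eqref{inv-Hom-anti-pre-Lie} defines a compatible Hom-anti-pre-Lie superalgebra $(\mathcal A,\circ,\alpha)$ with sub-adjacent Hom-Lie superalgebra $(\mathcal A,[\cdot,\cdot],\alpha)$. What remains is Eqs.~\eqref{eq:defi:H-anti-pre-Lie Poisson1} and \eqref{eq:defi:H-anti-pre-Lie Poisson2}. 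I will assume, as is implicit in using \eqref{inv-Hom-anti-pre-Lie}, that $\alpha$ is invertible, or at least that the elements $\alpha(t)$ separate points for $\mathfrak B$. Then each identity $E(x,y,z)=0$ follows once I show $\mathfrak B(E(x,y,z),\alpha(t))=0$ for all homogeneous $t$.

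To pair each term with $\alpha(t)$, I will use three transfer rules.
\begin{enumerate}
\item \emph{Moving $\mu$.} By invariance \eqref{def-sym-inv-bil-form-H-ass2}, together with super-commutativity and supersymmetry, $\mathfrak B(\mu(a,b),\alpha(t))=\mathfrak B(\alpha(a),\mu(b,t))$, up to signs.
\item \emph{Removing $\circ$.} By \eqref{inv-Hom-anti-pre-Lie}, $\mathfrak B(a\circ b,\alpha(t))=(-1)^{|a||b|}\mathfrak B(\alpha(b),[a,t])$.
\item \emph{Reshuffling $\alpha$'s.} Multiplicativity and \eqref{def-sym-inv-bil-form-H-ass1} let me move $\alpha$ onto the correct slot before applying rule 1 or rule 2.
\end{enumerate}
After these steps, every term becomes an expression $\mathfrak B(\alpha(\cdot),\mu(\cdot,[\cdot,\cdot]))$ or $\mathfrak B(\alpha(\cdot),[\cdot,\mu(\cdot,\cdot)])$. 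Any remaining term $\mathfrak B(\alpha(w),[a,b])$ with $w$ a $\mu$-product is rotated with the cocycle identity \eqref{sup-comm-2-cocyc}, so that the bracket is always paired against one fixed argument.

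For Eq.~\eqref{eq:defi:H-anti-pre-Lie Poisson1}, pairing with $\alpha(t)$ turns the left side into terms of the form $\mathfrak B(\alpha(\mu(z,t)),[x,\cdot])$. The right side becomes $\mathfrak B(\alpha(z),\mu(x,[y,t]))$-type terms, which I move to $\mathfrak B(\alpha(\mu(x,z)),[y,t])$-type expressions using rule 1. The difference should then vanish by the transposed Hom-Poisson identity \eqref{eq:defi:transposed Poisson algebra}, with $z$ there specialized to the appropriate element, combined with one application of \eqref{sup-comm-2-cocyc}.

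Eq.~\eqref{eq:defi:H-anti-pre-Lie Poisson2} is handled the same way. Its pairing gives terms $\mathfrak B(\alpha(\mu(y,z)),[x,t])$ and $\mathfrak B(\alpha(y),[\mu(z,x),t])$, together with terms coming from $\mu(\alpha(z),x\circ y)$ via rules 1 and 2. I will combine these using \eqref{eq:defi:transposed Poisson algebra} in the form $2\mu(\alpha(z),[x,t])=[\mu(z,x),\alpha(t)]+\pm[\alpha(x),\mu(z,t)]$, paired with $\alpha(y)$, followed by the cocycle identity.

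The main obstacle is bookkeeping. First, the $\alpha$'s must land so that invariance and the transposed identity apply literally; this is why the multiplicativity and invertibility hypotheses are needed. Second, the Koszul signs have to be tracked carefully. I would first check both identities with all elements even and $\alpha=\mathrm{id}$, recovering the known result of Liu and Bai, and then insert the signs via the Koszul rule.
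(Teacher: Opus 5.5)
Your plan is essentially the paper's proof. Theorem~\ref{ant-h-pr-Lie-from-nondeg-bil} supplies the Hom-anti-pre-Lie structure. Each of \eqref{eq:defi:H-anti-pre-Lie Poisson1} and \eqref{eq:defi:H-anti-pre-Lie Poisson2} is then paired with a twisted test vector (the paper uses $\alpha^2(t)$), $\mu$ is moved by invariance and $\circ$ is removed by \eqref{inv-Hom-anti-pre-Lie}, and nondegeneracy concludes. Your explicit multiplicativity and invertibility hypotheses are exactly what the paper uses tacitly.

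One small correction simplifies your route. Applying \eqref{inv-Hom-anti-pre-Lie} to the right side of \eqref{eq:defi:H-anti-pre-Lie Poisson1} yields $[y,\mu(x,t)]$-type terms, not $\mu(x,[y,t])$. If you first collapse $x\circ y-(-1)^{|x||y|}y\circ x=[x,y]$, both pairings become $\mathfrak B(\alpha(z),\cdot)$, resp.\ $\mathfrak B(\alpha(y),\cdot)$, of an instance of the transposed Hom-Poisson identity \eqref{eq:defi:transposed Poisson algebra}, and no separate cocycle rotation is needed.
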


\begin{proof} By Theorem~\ref{ant-h-pr-Lie-from-nondeg-bil}, $(\mathcal A,\circ,\alpha)$ is a Hom-anti-pre-Lie superalgebra.
Let $x,y,z,t\in\mathcal H(\mathcal A)$. Then we have
\begin{eqnarray*}
&&\qquad\mathfrak B(2\mu([x,y],\alpha(z))+\mu(\alpha(x),(y\circ z))-(-1)^{|x||y|}\mu(\alpha(y),(x\circ z)),\alpha^2(t))\\&&=(-1)^{|z|(|x|+|y|)}\mathfrak B(\alpha(z),2\mu([x,y],t)+(-1)^{|x||y|}[y,\mu(x,t)]-[x,\mu(y,t)])\\&&=0,
\end{eqnarray*}
and
\begin{eqnarray*}
&&\qquad\mathfrak B(2\alpha(x)\circ\mu(y,z)-(-1)^{|z|(|x|+|y|)}\mu(z,x)\circ\alpha(y)-(-1)^{|z|(|x|+|y|)}\mu(\alpha(z),(x\circ y),\alpha^2(t))\\&&=(-1)^{|x||y|}\mathfrak B(\alpha(y),2(-1)^{|x||z|}\mu(z,[x,t])-(-1)^{|x||z|}[\mu(z,x),t]-[x,\mu(z,t)])\\&&=0.
\end{eqnarray*}
Thus $(\mathcal A,\mu,\circ,\alpha)$ is a Hom-anti-pre-Lie Poisson superalgebra.
\end{proof}

\section{Hom-anti-pre-Novikov superalgebras}

We introduce a notion of Hom-anti-pre-Novikov superalgebras as a new approach of splitting operations,
whose negative left and right multiplication operators compose the bimodules of the
associated Hom-Novikov superalgebras. The notions of anti-super-$\mathcal O$-operators on Hom-Novikov superalgebras are considered
 to interpret Hom-anti-pre-Novikov superalgebras. 
\subsection{Hom-anti-pre-Novikov superalgebras and Representations}

\begin{df}
A {\bf representation} of a Hom-Novikov superalgebra $(\mathcal A,\star,\alpha)$ is a quadruple $(V,\mathfrak l,\mathfrak r,\beta)$, 
where $V$ is a $\mathbb Z_2$-graded vector space, $\beta:V\to V$ is an even linear map and $\mathfrak l,\mathfrak r: \mathcal A\rightarrow \text{End}_{\mathbb K}(V)$ are even linear maps satisfying
\begin{align}
&\beta\mathfrak l(x)=\mathfrak l(\alpha(x))\beta,\;\;\beta\mathfrak r(x)=\mathfrak r(\alpha(x))\beta,\label{Nr0}\\
 &\mathfrak l(x\star y-(-1)^{|x||y|}y\star x)\beta=\mathfrak l(\alpha(x))\mathfrak l(y)-(-1)^{|x||y|}\mathfrak l(\alpha(y))\mathfrak l(x),\label{Nr1}\\
&\mathfrak l(\alpha(x))\mathfrak r(y)-(-1)^{|x||y|}\mathfrak r(\alpha(y))\mathfrak l(x)=\mathfrak r(x\star y)\beta-(-1)^{|x||y|}\mathfrak r(\alpha(y))\mathfrak r(x),\label{Nr2}\\
 &\mathfrak l(x\star y)\beta=(-1)^{|x||y|}\mathfrak r(\alpha(y))\mathfrak l(x),\ \ \
\mathfrak r(\alpha(x))\mathfrak r(y)=(-1)^{|x||y|}\mathfrak r(\alpha(y))\mathfrak r(x), \label{Nr3}
\end{align} for all $ x,y\in\mathcal H(\mathcal A).$
\end{df}

\begin{df}
 A {\bf Hom-anti-pre-Novikov superalgebra} is a $\mathbb Z_2$-graded vector space $\mathcal A$ together with two even blinear maps 
$\succ,\prec: \mathcal A\times\mathcal A \rightarrow\mathcal A$ and an even linear map $\alpha:\mathcal A\to\mathcal A$ satisfying
\begin{align}
&(x\circ y-(-1)^{|x||y|}y\circ x)\succ\alpha(z)=(-1)^{|x||y|}\alpha(y)\succ(x\succ z)-\alpha(x)\succ(y\succ z), \label{Aa1}\\
&\alpha(x)\prec(y\circ z)=(-1)^{|x||y|}(y\succ x) \prec\alpha(z)-(x\prec y)\prec\alpha(z)-(-1)^{|x||y|}\alpha(y)\succ (x\prec z),\label{Aa2}\\
&(x\circ y)\succ\alpha(z)=-(-1)^{|y||z|}(x\succ z)\prec\alpha(y),\label{Aa3}\\
&(x\prec y)\prec\alpha(z)=(-1)^{|y||z|}(x\prec z)\prec\alpha(y),\label{Aa4} \\
&(x\circ y-(-1)^{|x||y|}y\circ x)\prec\alpha(z)=\alpha(x)\succ(y\circ z)-(-1)^{|x||y|}\alpha(y)\succ(x\circ z),\label{Aa5}
\end{align}
for all $x,y,z\in\mathcal H(\mathcal A)$, where $x\circ y=x\succ y+x\prec y$.
\end{df}

\begin{exa}
Let $\mathcal{A} = \mathcal{A}_{\bar{0}} \oplus \mathcal{A}_{\bar{1}}$ be a $3$-dimensional $\mathbb{Z}_2$-graded vector space with basis $\{e_1, e_2, f\}$, where $|e_1| = |e_2| = \bar{0}$ and $|f| = \bar{1}$.

Define the non-zero products for the even bilinear operations $\succ, \prec: \mathcal{A} \times \mathcal{A} \to \mathcal{A}$ on basis elements by:
\begin{align*}
e_1 \succ e_1 &= e_2, & f \succ f &= e_2, \\
e_1 \prec e_1 &= -e_2, & f \prec f &= e_2,
\end{align*}
with all other products between basis elements vanishing. 

Define the linear map $\alpha: \mathcal{A} \to \mathcal{A}$ by:
\begin{equation*}
\alpha(e_1) = e_1, \quad \alpha(e_2) = e_2, \quad \alpha(f) = -f.
\end{equation*}

Then $(\mathcal{A}, \succ, \prec, \alpha)$ is a Hom-anti-pre-Novikov superalgebra.
\end{exa}

\begin{prop}
Let $\mathcal A$ be a $\mathbb Z_2$-graded vector space with two binary operations $\succ$ and $\prec$ and a linear map $\alpha: \mathcal A\to\mathcal A$. Define $x\circ y=x\succ y+x\prec y, \forall~x,y\in\mathcal H(\mathcal A)$.
Then the following conditions are equivalent:
\begin{enumerate}
\item $(\mathcal A,\succ,\prec,\alpha)$ is a Hom-anti-pre-Novikov superalgebra.
 \item  $(\mathcal A,\circ,\alpha)$ is a Hom-Novikov superalgebra
 and Eqs. (\ref{Aa1})-(\ref{Aa4}) hold for all $x,y,z\in\mathcal H(\mathcal A)$.
 \item $(\mathcal A,\circ,\alpha)$ is a Hom-Novikov superalgebra and $(\mathcal A,-\mathfrak L_{\succ},-\mathfrak R_{\prec},\alpha)$ is a representation of $(\mathcal A,\circ,\alpha)$,
\end{enumerate}
where  $\mathfrak L_{\succ},\mathfrak R_{\prec} : \mathcal A\longrightarrow \hbox{End} (\mathcal A)$ are the
linear maps defined by $\mathfrak L_{\succ}(x)(y)=x\succ y,~\mathfrak R_{\prec}(y)(x)=x\prec y$ 
for all $x,y\in\mathcal H(\mathcal A)$.
\end{prop}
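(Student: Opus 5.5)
The plan is to prove $(1)\Rightarrow(2)$, $(2)\Rightarrow(1)$ and $(2)\Leftrightarrow(3)$.

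For $(2)\Leftrightarrow(3)$, substitute $\mathfrak l=-\mathfrak L_\succ$, $\mathfrak r=-\mathfrak R_\prec$, $\beta=\alpha$ and the product $\star=\circ$ into the representation axioms (\ref{Nr1})--(\ref{Nr3}), and evaluate each on a homogeneous $z$, keeping the Koszul signs that come from moving $z$ past $x,y$. Applied to $z$, (\ref{Nr1}) reads
\[
-[x,y]_\circ\succ\alpha(z)=\alpha(x)\succ(y\succ z)-(-1)^{|x||y|}\alpha(y)\succ(x\succ z),
\]
which is exactly (\ref{Aa1}). Likewise (\ref{Nr2}) becomes (\ref{Aa2}) after replacing $y$ by its role as the right argument. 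The first identity in (\ref{Nr3}) becomes (\ref{Aa3}), and the second becomes (\ref{Aa4}). The minus signs combine in pairs: products of two operators carry $(-1)^2$, and terms with a single operator composed with $\beta$ carry one sign on each side. For (\ref{Nr0}), I will assume it is part of the standing multiplicativity convention, $\alpha(x\succ y)=\alpha(x)\succ\alpha(y)$ and similarly for $\prec$. Otherwise it has to be added as an implicit hypothesis in (2) and (1); I would remark this in the proof. With this, (2) and (3) are literally the same list of identities.

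For $(1)\Rightarrow(2)$, it remains to show that $(\mathcal A,\circ,\alpha)$ is Hom-Novikov.
\begin{enumerate}
\item For the Hom-pre-Lie identity (\ref{HPL}) for $\circ$, expand $(x\circ y)\circ\alpha(z)-\alpha(x)\circ(y\circ z)$ minus its $x\leftrightarrow y$ supersymmetrisation. Split it into the $\succ\alpha(z)$ and $\prec\alpha(z)$ parts of $[x,y]_\circ\circ\alpha(z)$, together with $\alpha(x)\succ(y\circ z)$, $\alpha(x)\prec(y\circ z)$ and their swaps. Expand $\alpha(x)\succ(y\circ z)$ and then use (\ref{Aa1}) to kill the $\succ$-$\succ$ part. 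Use (\ref{Aa5}) to match $[x,y]_\circ\prec\alpha(z)$ with the $\alpha(\cdot)\succ(\cdot\circ z)$ terms. Use (\ref{Aa2}), together with its $x\leftrightarrow y$ swap, to absorb the $\alpha(x)\prec(y\circ z)$ terms; the $(y\succ x)\prec\alpha(z)$ and $(x\prec y)\prec\alpha(z)$ terms they produce should cancel against the remaining $\prec$-terms of $[x,y]_\circ\circ\alpha(z)$.
\item For right supercommutativity (\ref{eq:defi:Hom-Novikov superalgebras1}), write $(x\circ y)\circ\alpha(z)=(x\circ y)\succ\alpha(z)+(x\circ y)\prec\alpha(z)$. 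Rewrite the first summand by (\ref{Aa3}) as $-(-1)^{|y||z|}(x\succ z)\prec\alpha(y)$. Expand the second as $(x\succ y)\prec\alpha(z)+(x\prec y)\prec\alpha(z)$, and apply (\ref{Aa3}) with $y,z$ swapped to the first of these. The $\succ$-pieces then become symmetric under $y\leftrightarrow z$ with the sign $(-1)^{|y||z|}$, and (\ref{Aa4}) handles the $\prec\prec$ piece, giving the identity.
\end{enumerate}

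For $(2)\Rightarrow(1)$, only (\ref{Aa5}) must be recovered. Reverse the computation in step 1: given (\ref{HPL}) for $\circ$ and (\ref{Aa1}), (\ref{Aa2}), the residual of that computation is exactly the difference of the two sides of (\ref{Aa5}). Hence (\ref{HPL}) forces (\ref{Aa5}).

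The main obstacle is step 1, the sign bookkeeping in the pre-Lie verification. In particular, (\ref{Aa2}) has to be used twice with different sign factors, and I must confirm that the leftover expression is precisely (\ref{Aa5}) and not a multiple of it plus an extra term. I will organize the terms by type ($\succ\succ$, $\succ\prec$, $\prec\succ$, $\prec\prec$) to track cancellations.
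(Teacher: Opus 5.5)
\textbf{Step 2 has a genuine gap, and it cannot be closed: $(1)\Rightarrow(2)$ and $(1)\Rightarrow(3)$ are false for the definition as written.} The paper's own proof has the same hole.

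The rest of your plan matches the paper and is fine. That covers step 1 (the Hom-pre-Lie defect of $\circ$ equals twice the defect of (\ref{Aa5}) once (\ref{Aa1}) and (\ref{Aa2}) are used), your $(2)\Rightarrow(1)$, and the dictionary $(2)\Leftrightarrow(3)$. Your caveat about (\ref{Nr0}) is justified; the paper never checks it. The dictionary also needs the Koszul convention $\mathfrak R_\prec(y)x=(-1)^{|x||y|}x\prec y$. With the literal $\mathfrak R_\prec(y)(x)=x\prec y$, the sign in (\ref{Aa3}) comes out as $(-1)^{|x||y|}$ instead of $(-1)^{|y||z|}$.

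\textbf{Where step 2 breaks.} After using (\ref{Aa3}) and (\ref{Aa4}), what remains of $(x\circ y)\circ\alpha(z)-(-1)^{|y||z|}(x\circ z)\circ\alpha(y)$ is $2S(y,z)$, where
\[
S(y,z)=(x\succ y)\prec\alpha(z)-(-1)^{|y||z|}(x\succ z)\prec\alpha(y).
\]
This satisfies $(-1)^{|y||z|}S(z,y)=-S(y,z)$. So the $\succ\prec$ cross terms are super-\emph{anti}symmetric in $y,z$, not symmetric as you claim. Applying (\ref{Aa3}) with $y,z$ swapped only rewrites $(x\succ y)\prec\alpha(z)$ as $-(-1)^{|y||z|}(x\circ z)\succ\alpha(y)$, which leads back to the same expression. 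Given (\ref{Aa3}) and (\ref{Aa4}), right super-commutativity of $\circ$ is therefore equivalent to the extra identity
\[
(x\succ y)\prec\alpha(z)=(-1)^{|y||z|}(x\succ z)\prec\alpha(y),
\]
equivalently $(x\circ y)\succ\alpha(z)=(-1)^{|y||z|}(x\circ z)\succ\alpha(y)$. The paper's proof reaches exactly $2S(y,z)$ and then invokes a ``right-commutativity relation'' that is not among (\ref{Aa1})--(\ref{Aa5}).

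\textbf{Why no bookkeeping can fix it.} The identity does not follow from (\ref{Aa1})--(\ref{Aa5}), even with $\alpha=\mathrm{id}$ and $\mathcal A=\mathcal A_{\bar 0}$. Work in multilinear degree $3$ of the free algebra with two operations, and suppose a combination of relation instances equals the left-combed target $(x\succ y)\prec z-(x\succ z)\prec y$.
\begin{enumerate}
\item Monomials $u\prec(v\succ w)$ occur only in (\ref{Aa2}), one per instance. So no instance of (\ref{Aa2}) can be used.
\item Then $u\succ(v\prec w)$ occurs only in (\ref{Aa5}), whose instances are antisymmetric in the first two variables. So (\ref{Aa5}) drops out.
\item Then $u\succ(v\succ w)$ occurs only in (\ref{Aa1}), which drops out for the same reason.
\item (\ref{Aa4}) is then the only relation containing $(u\prec v)\prec w$, so it drops out as well.
\end{enumerate}
Only instances of (\ref{Aa3}) remain, and each contains exactly one $\succ\prec$ monomial. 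Matching the target forces the instance of (\ref{Aa3}) at $(x,z,y)$ minus the instance at $(x,y,z)$. This leaves the nonzero residue $(x\circ z)\succ y-(x\circ y)\succ z$.

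Consequently, take the free algebra on three generators modulo (\ref{Aa1})--(\ref{Aa5}) and all monomials of degree $\geq 4$, with $\alpha=\mathrm{id}$ and trivial grading. It satisfies (1), but $\circ$ is not right-commutative. Your argument and the paper's both become correct once the displayed identity is added as an axiom in the definition of a Hom-anti-pre-Novikov superalgebra.
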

\begin{proof}
$(1)\Longrightarrow (2)$
Suppose that $(\mathcal A,\succ,\prec,\alpha)$ is a Hom-anti-pre-Novikov superalgebra. Then, for all homogeneous  $x,y,z\in\mathcal A$, identities \eqref{Aa1}-\eqref{Aa4} hold 
\begin{align*}
 &(x\circ y)\circ \alpha(z)-\alpha(x)\circ(y\circ z)-(-1)^{|x||y|}(y\circ x)\circ \alpha(z)+(-1)^{|x||y|} \alpha(y)\circ (x\circ z) \\
=&(x\circ y-(-1)^{|x||y|}y\circ x)\succ \alpha(z)+(x\circ y-(-1)^{|x||y|} y\circ x)\prec \alpha(z)-\alpha(x)\succ (y\circ z) \\
&-\alpha(x)\prec (y\circ z)+(-1)^{|x||y|} \alpha(y)\succ (x\circ z)+(-1)^{|x||y|} \alpha(y)\prec (x\circ z)\\
\overset{(\ref{Aa1}),(\ref{Aa2})}{=}&(-1)^{|x||y|} \alpha(y)\succ (x\succ z)-\alpha(x)\succ (y\succ z)+(x\circ y-(-1)^{|x||y|} y\circ x)\prec \alpha((z) \\
&-\alpha(x)\succ(y\circ z)+(-1)^{|x||y|}\alpha(y)\succ(x\circ z)-(-1)^{|x||y|}(y\succ x)\prec \alpha(z)+(x\succ y)\prec \alpha(z)\\
&+(x\prec y)\prec \alpha(z)-(-1)^{|x||y|}(y\prec x)\prec \alpha(z)+(-1)^{|x||y|}\alpha(y)\succ (x\prec z)-\alpha(x)\succ (y\prec z)\\
=&(x\circ y-(-1)^{|x||y|} y\circ x)\prec \alpha(z)+(x\succ y +x\prec y)\prec \alpha(z)-(-1)^{|x||y|}(y\succ x+y\prec x)\prec \alpha(z) \\
&+(-1)^{|x||y|} \alpha(y)\succ (x\succ z+x\prec z)-\alpha(x)\succ (y\succ z+y\prec z)-\alpha(x)\succ (y\circ z)+(-1)^{|x||y|}\alpha(y)\succ (x\circ z)\\
=&2((x\circ y-(-1)^{|x||y|}y\circ x) \prec \alpha(z)-\alpha(x)\succ (y\circ z)+(-1)^{|x||y|} \alpha(y)\succ (x\circ z)) \\\overset{(\ref{Aa5})}{=}&0
\end{align*}
then 
\begin{align*}
     (x\circ y)\circ \alpha(z)-\alpha(x)\circ(y\circ z)=(-1)^{|x||y|}(y\circ x)\circ \alpha(z)-(-1)^{|x||y|} \alpha(y)\circ (x\circ z)
\end{align*}
  It follows immediately that, $(\mathcal A,\circ,\alpha)$ is a Hom-pre-Lie superalgebra.\\
In addition, a straightforward verification yields:
\begin{align*}
LHS &=(x\circ y)\circ\alpha(z)\\& =(x \circ y) \succ \alpha(z) + (x \succ y) \prec \alpha(z) + (x \prec y) \prec \alpha(z).
\end{align*}
Applying axioms \eqref{Aa3} and \eqref{Aa4}, we get
\begin{align}\label{eq:LHS-expanded}
\text{LHS} &= -(-1)^{|y||z|}(x \succ z) \prec \alpha(y) + (x \succ y) \prec \alpha(z) + (-1)^{|y||z|}(x \prec z) \prec \alpha(y).
\end{align}
Similarly, expanding the right-hand side yields
\begin{align}\label{eq:RHS-expanded}
\text{RHS} &= (-1)^{|y||z|}(x \circ z) \circ \alpha(y) \nonumber\\
&= -(x \succ y) \prec \alpha(z) + (-1)^{|y||z|}(x \succ z) \prec \alpha(y) + (-1)^{|y||z|}(x \prec z) \prec \alpha(y).
\end{align}
Subtracting \eqref{eq:RHS-expanded} from \eqref{eq:LHS-expanded}, the term $(-1)^{|y||z|}(x \prec z) \prec \alpha(y)$ cancels out, giving
\begin{align*}
\text{LHS} - \text{RHS} &= 2(x \succ y) \prec \alpha(z) - 2(-1)^{|y||z|}(x \succ z) \prec \alpha(y).
\end{align*}
Using the right-commutativity relation $(x \succ y) \prec \alpha(z) = (-1)^{|y||z|}(x \succ z) \prec \alpha(y)$, we conclude that $\text{LHS} - \text{RHS} = 0$ ,which gives that, the induced operation $"\circ"$ satisfies condition \eqref{eq:defi:Hom-Novikov superalgebras1}. Therefore, $(\mathcal A,\circ,\alpha)$ is a Hom-Novikov superalgebra.\\

$(2)\Longrightarrow (1)$ Follows immediately from a direct computation by using \eqref{eq:defi:Hom-Novikov superalgebras1} together with identities \eqref{Aa1}-\eqref{Aa4}.\\

$(1)\Longrightarrow (3)$ Suppose that $(\mathcal A,\succ,\prec,\alpha)$ is a Hom-anti-pre-Novikov superalgebra. From the first part of the proof, we already know that $(\mathcal A,\succ,\prec,\alpha)$ is a Hom-Novikov superalgebra. \\
It remains to show that $(\mathcal A,-\mathfrak L_{\succ},-\mathfrak R_{\prec},\alpha)$ is a representation of $(\mathcal A,\circ,\alpha)$.\\
For any $x,y,z\in\mathcal H(\mathcal A)$, we have:
\begin{align*}
 &\qquad-\mathfrak L_{\succ}(x\circ y-(-1)^{|x||y|}y\circ x) \alpha(z)-\mathfrak L_{\succ}(\alpha(x))\mathfrak L_{\succ}(y)z+(-1)^{|x||y|} \mathfrak L_{\succ}(\alpha(y))\mathfrak L_{\succ}(x)z \\
 &=(x\circ y-(-1)^{|x||y|}y\circ x)\succ\alpha(z))+\alpha(x)\succ(y\succ z)-(-1)^{|x||y|}\alpha(y)\succ(x\succ z) \\
 &\overset{(\ref{Aa1})}{=}0
\end{align*}
Similarly, we have
$$
 -\mathfrak L_{\succ}(x\circ y)\alpha(z)-(-1)^{|x||y|} \mathfrak R_{\prec}(\alpha(y))\mathfrak L_{\succ}(x)z 
 = (x\circ y)\succ\alpha(z)+(-1)^{|y||z|}(x\succ z)\prec\alpha(y)\overset{(\ref{Aa3})}{=}0,
$$

 \begin{align*}
(-1)^{|z|(|x|+|y|)} \mathfrak R_{\prec}(\alpha(z))\mathfrak R_{\prec}(x)y-(-1)^{|z|(|x|+|y|)} \mathfrak R_{\prec}(\alpha(y)) \mathfrak R_{\prec}(z)x
&=(x\prec y)\prec\alpha(z)-(-1)^{|y||z|}(x\prec z)\prec\alpha(y) \\
&\overset{(\ref{Aa4})}{=}0,
 \end{align*}
 and
\begin{align*}
&-(-1)^{|x|(|y|+|z|)}\mathfrak R_{\prec}(y\circ z)\alpha(x)-(-1)^{(|x||y|+|z|(|x|+|y|)}\mathfrak R_{\prec}(\alpha(z))\mathfrak L_{\succ}(y)x+(-1)^{(|x||y|+|z|(|x|+|y|)}\mathfrak R_{\prec}(\alpha(z))\mathfrak R_{\prec}(y)x\\&\qquad\qquad+(-1)^{|x|(|y|+|z|)}\mathfrak L_{\succ}(\alpha(y))\mathfrak R_{\prec}(z)x\\ 
=&\alpha(x)\prec(y\circ z)-(-1)^{|x||y|}(y\succ x) \prec\alpha(z)+(x\prec y)\prec\alpha(z)+(-1)^{|x||y|}\alpha(y)\succ (x\prec z) \\
\overset{(\ref{Aa2})}{=}&0.
\end{align*}
Therefore, conditions \eqref{Nr1}-\eqref{Nr3} are satisfied, which implies that, $(\mathcal A,-\mathfrak L_{\succ},-\mathfrak R_{\prec},\alpha)$ is a representation of $(\mathcal A,\circ,\alpha)$. \\

$(3)\Longrightarrow (1)$\\
Assume that $(\mathcal A,\circ,\alpha)$ is a Hom-Novikov superalgebra and $(\mathcal A,-\mathfrak L_{\succ},-\mathfrak R_{\prec},\alpha)$ is a representation of $(\mathcal A,\circ,\alpha)$. Then, for any $x,y,z\in\mathcal H(\mathcal A)$, we have \\
\begin{align*}
&\qquad(x\circ y-(-1)^{|x||y|}y\circ x)\succ\alpha(z))+\alpha(x)\succ(y\succ z)-(-1)^{|x||y|}\alpha(y)\succ(x\succ z)\\
&= -\mathfrak L_{\succ}(x\circ y-(-1)^{|x||y|}y\circ x) \alpha(z)-\mathfrak L_{\succ}(\alpha(x))\mathfrak L_{\succ}(y)z+(-1)^{|x||y|} \mathfrak L_{\succ}(\alpha(y))\mathfrak L_{\succ}(x)z \\
&=0.
\end{align*}
Similarly, we have
\begin{align*}
&\qquad\alpha(x)\prec(y\circ z)-(-1)^{|x||y|}(y\succ x) \prec\alpha(z)+(x\prec y)\prec\alpha(z)+(-1)^{|x||y|}\alpha(y)\succ (x\prec z) \\
&=-(-1)^{|x|(|y|+|z|)}\mathfrak R_{\prec}(y\circ z)\alpha(x)-(-1)^{(|x||y|+|z|(|x|+|y|)}\mathfrak R_{\prec}(\alpha(z))\mathfrak L_{\succ}(y)x+(-1)^{(|x||y|+|z|(|x|+|y|)}\mathfrak R_{\prec}(\alpha(z))\mathfrak R_{\prec}(y)x\\&\qquad+(-1)^{|x|(|y|+|z|)}\mathfrak L_{\succ}(\alpha(y))\mathfrak R_{\prec}(z)x\\ 
&=0,
\end{align*}
\begin{align*}
(x\circ y)\succ\alpha(z)+(-1)^{|y||z|}(x\succ z)\prec\alpha(y) 
&=-\mathfrak L_{\succ}(x\circ y)\alpha(z)-(-1)^{|x||y|} \mathfrak R_{\prec}(\alpha(y))\mathfrak L_{\succ}(x)z \\
&=0,  
\end{align*}
 \begin{align*}
&(x\prec y)\prec\alpha(z)-(-1)^{|y||z|}(x\prec z)\prec\alpha(y) 
&=(-1)^{|z|(|x|+|y|)} \mathfrak R_{\prec}(\alpha(z))\mathfrak R_{\prec}(x)y-(-1)^{|z|(|x|+|y|)} \mathfrak R_{\prec}(\alpha(y)) \mathfrak R_{\prec}(z)x
&=0,
\end{align*}
and
By the defining identity of a Hom-Novikov superalgebra, we have
\begin{align*}
 (x\circ y-(-1)^{|x||y|}y\circ x)\prec\alpha(z)=\alpha(x)\succ(y\circ z)-(-1)^{|x||y|}\alpha(y)\succ(x\circ z),
\end{align*}
Then, $(\mathcal A,\succ,\prec,\alpha)$ is a Hom-anti-pre-Novikov superalgebra.

\end{proof}

\subsection{ Anti-super-$\mathcal O$-operators }
\begin{df} Let $(V,\mathfrak l,\mathfrak r,\beta)$ be a representation of a Hom-Novikov superalgebra $(\mathcal A,\star,\alpha )$. An even linear map $T:V\rightarrow\mathcal A$ is 
called an \textbf{anti-super-$\mathcal{O}$-operator} on $(\mathcal A,\star,\alpha )$ associated to $(V,\mathfrak l,\mathfrak r,\beta)$ if $T$ satisfies
\begin{equation}\label{Ao10}
T(u)\star T(v)=-T(\mathfrak l(T(u))v+(-1)^{|u||v|}\mathfrak r(T(v))u),\ \ \ \forall~ u,v\in\mathcal H( V).
\end{equation}
Furthermore, $T$ is called \textbf{strong} if 
\begin{equation}\label{Ao1}
\mathfrak l(T(u)\star T(v)-(-1)^{|u||v|}T(v)\star T(u))\beta(w)+(-1)^{|v||w|}\mathfrak r(T(u)\circ T(w))\beta(v)-(-1)^{|u|(|v|+|w|)}r(T(v)\star T(w))\beta(u)=0,
\end{equation}
for all $u,v,w\in\mathcal H( V)$.
In particular, an anti-super-$\mathcal{O}$-operator $T$ on $(\mathcal A,\star,\alpha )$ associated with
the representation $(\mathcal A,\mathfrak L_{\star},\mathfrak R_{\star},\alpha)$ is called an anti-Rota-Baxter operator, that is, $T:\mathcal A\longrightarrow\mathcal A$
is a linear map satisfying 
\begin{equation}
T(x)\star T(y)=-T(T(x)\star y)+x\star T(y)),\ \ \ \forall~ x,y\in\mathcal H(\mathcal A). \end{equation}
An anti-Rota-Baxter operator $T$ is called \textbf{strong} if $T$ satisfies 
\begin{equation}
[T(x),T(y)]\star\alpha(z)+(-1)^{|x||y|}\alpha(y)\star (T(x)\star T(z))-\alpha(x)\star (T(y)\star T(z))=0,\ \ \ \forall~ x,y,z\in\mathcal H(\mathcal A),\end{equation}
where $[x,y]=x\star y-(-1)^{|x||y|}y\star x$.
\end{df}

\begin{prop}\label{Am1} Let $(V,\mathfrak l,\mathfrak r,\beta)$ be a representation of a Hom-Novikov sualgebra $(\mathcal A,\star,\alpha)$. Assume that
$T:V\longrightarrow\mathcal A$ is an anti-super-$\mathcal{O}$-operator on $(\mathcal A,\star,\alpha)$ associated to $(V,\mathfrak l,\mathfrak r,\beta)$. Define two
binary operations $\succ,\prec$ on $V$ respectively by
\begin{equation} \label{Ao2} u\succ v=-\mathfrak l(T(u))v, \ \ \ 
u\prec v=-(-1)^{|u||v|}\mathfrak r(T(v))u, \ \ \ \forall ~u,v\in\mathcal H(V).\end{equation} Then we have
\begin{enumerate}
\item For all $u,v,w\in\mathcal H(V)$,  denote $u\cdot v=u\succ v+u\prec v$, the following equations hold:
\begin{align}  ((-1)^{|u||v|}v\cdot u-u\cdot v)\succ\beta(w)&=\beta(u)\succ(v\succ w)-(-1)^{|u||v|}\beta(v)\succ(u\succ w),\label{Ao3} 
\\ (-1)^{|v||w|}\big((u\succ w)\prec\beta(v)-\beta(u)\succ(w\prec v)\big)&=(-1)^{|w|(|u|+|v|)}\big(\beta(w)\prec(u\cdot v)+(w\prec u)\prec\beta(v)\big),
\label{Ao4}
\\ (u\cdot v)\succ\beta(w)&=-(-1)^{|v||w|}(u\succ w)\prec\beta(v),\label{Ao5}\\ (-1)^{|w|(|u|+|v|)+|u||v|}(w\prec v)\prec\beta(u)&=(-1)^{|w|(|u|+|v|)}(w\prec u)\prec\beta(v).\nonumber\end{align}
\item $(V,\succ,\prec,\beta)$ is a Hom-anti-pre-Novikov superalgebra if and only if $T$ is strong. In this case, $T$ 
is a homomorphism of Hom-Novikov superalgebras from $(V,\cdot,\beta)$ to $(\mathcal A,\circ,\alpha)$. Moreover, there is an induced
Hom-anti-pre-Novikov superalgebra structure on $T(V)=\{T(u)|u\in V \} \subseteq A$ given by
\begin{equation*}T(u)\succ T(V)=T(u\succ v),\ \ \ T(u)\prec T(V)=T(u\prec v),\ \ ~\forall~u,v\in V
\end{equation*}
and $T$ 
is a homomorphism of Hom-anti-pre-Novikov superalgebras.
\end{enumerate}
\end{prop}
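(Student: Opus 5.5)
The plan is to reduce everything to the single identity that makes $\cdot$ compatible with $T$. First I will observe that the $\mathcal O$-operator condition \eqref{Ao10}, together with the definitions \eqref{Ao2}, gives
\[
T(u\cdot v)=T(u\succ v+u\prec v)=-T\bigl(\mathfrak l(T(u))v+(-1)^{|u||v|}\mathfrak r(T(v))u\bigr)=T(u)\star T(v),
\]
and the condition $T\beta=\alpha T$ (understood as part of the definition, as in \eqref{cond-ant-O-oper-Hom-Lie1}) lets me move $\beta$ through $T$. Every product of the form $x\succ\beta(w)$ or $x\prec\beta(w)$ with $x=u\cdot v$ is then rewritten as $-\mathfrak l(T(u)\star T(v))\beta(w)$ or $-\mathfrak r(\ldots)$. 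Likewise, every iterated product $\beta(u)\succ(v\succ w)$, $(w\prec u)\prec\beta(v)$, and so on becomes a composite $\mathfrak l(\alpha(T(u)))\mathfrak l(T(v))w$, $\mathfrak r(\alpha(T(v)))\mathfrak r(T(u))w$, etc.

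For part (1) I will check each of the four identities in this dictionary against the representation axioms. Identity \eqref{Ao3} is exactly \eqref{Nr1} applied to $x=T(u)$, $y=T(v)$ and evaluated at $w$, after using $T(u\cdot v)=T(u)\star T(v)$ and the sign from $[\cdot,\cdot]$. Identity \eqref{Ao4} comes from \eqref{Nr2}, and identity \eqref{Ao5} from the first relation in \eqref{Nr3}. The last identity comes from the second relation in \eqref{Nr3}. The work here is careful bookkeeping of Koszul signs: each $u\prec v$ carries $(-1)^{|u||v|}$, and $T$ and $\beta$ are even, so degrees are preserved.

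For part (2) I compare with the axioms \eqref{Aa1}--\eqref{Aa5} of a Hom-anti-pre-Novikov superalgebra. Part (1) already gives \eqref{Aa1}, \eqref{Aa2}, \eqref{Aa3} and \eqref{Aa4}; \eqref{Aa1} and \eqref{Aa2} require only a rearrangement of signs. Hence $(V,\succ,\prec,\beta)$ is Hom-anti-pre-Novikov iff \eqref{Aa5} holds. I will translate \eqref{Aa5} through the dictionary: the left side is
\[
-\mathfrak r\bigl(T(u)\star T(v)-(-1)^{|u||v|}T(v)\star T(u)\bigr)\ldots,
\]
and the right side is $-\mathfrak l(\alpha T(u))\bigl(\ldots T(v\cdot w)\bigr)$. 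I then use \eqref{Nr2} and \eqref{Nr3} to convert the composites $\mathfrak l\mathfrak l$, $\mathfrak l\mathfrak r$ and $\mathfrak r\mathfrak r$ into single operators of the form $\mathfrak l(\cdot)\beta$ or $\mathfrak r(\cdot)\beta$. After cancellation the difference of the two sides of \eqref{Aa5} should be (up to sign) the left side of \eqref{Ao1}. I expect this step to be the main obstacle: matching \eqref{Ao1} exactly, including its mixed $\circ/\star$ notation and signs, may require relabelling $u,v,w$ and possibly invoking the Hom-Novikov identity of $\mathcal A$. I would first try to expand both sides fully into $\mathfrak l$ and $\mathfrak r$ terms and collect them.

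Finally, once strongness holds, $T(u\cdot v)=T(u)\star T(v)$ together with $T\beta=\alpha T$ shows that $T$ is a homomorphism $(V,\cdot,\beta)\to(\mathcal A,\star,\alpha)$. Here $(V,\cdot,\beta)$ is Hom-Novikov by the previous proposition, and the statement's $\circ$ on $\mathcal A$ is $\star$. For the induced structure on $T(V)$, I will check well-definedness: if $T(u)=T(u')$, then $u\succ v=-\mathfrak l(T(u))v$ depends only on $T(u)$. However, $T(u\prec v)$ is not obviously independent of the representative $v$, so I will define the structure as in Theorem \ref{Hom-ant-pre-Lie-by-ant-O-oper}, noting that $\ker T$ is an ideal for both $\succ$ and $\prec$. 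This follows since $T(u\succ v)$ and $T(u\prec v)$ sum to $T(u)\star T(v)$, and the individual pieces are handled via the same $\mathcal O$-operator identity. The axioms then transfer along the surjection $T:V\to T(V)$, and $T$ becomes a homomorphism of Hom-anti-pre-Novikov superalgebras by construction.
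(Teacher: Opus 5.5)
Your route for part (1) and for the equivalence in part (2) is the paper's. With $T(u\cdot v)=T(u)\star T(v)$ and $T\beta=\alpha T$, the four identities of part (1) follow from \eqref{Nr1}, \eqref{Nr2} and the two halves of \eqref{Nr3}. The defect of \eqref{Aa5} on $V$ is exactly the left side of \eqref{Ao1}. For that last step only \eqref{Nr1} and \eqref{Nr2} are needed: the $\mathfrak r\mathfrak l$ and $\mathfrak r\mathfrak r$ composites cancel directly, so neither \eqref{Nr3} nor the Hom-Novikov identity of $\mathcal A$ enters. Two of your additions improve on the paper:
\begin{itemize}
\item you make $T\beta=\alpha T$ explicit (the paper's proof uses it, but its definition omits it);
\item you check that $\ker T$ is a $\beta$-stable ideal for $\succ$ and $\prec$: if $T(k)=0$ then $u\prec k=0=k\succ v$, so \eqref{Ao10} gives $T(u\succ k)=0=T(k\prec v)$.
\end{itemize}
The step that fails is ``$(V,\cdot,\beta)$ is Hom-Novikov by the previous proposition''. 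Axioms \eqref{Aa1}--\eqref{Aa5} do not force right-commutativity of $\circ$. By \eqref{Aa3} and \eqref{Aa4},
\[
(x\circ y)\circ\alpha(z)-(-1)^{|y||z|}(x\circ z)\circ\alpha(y)=2\bigl((x\succ y)\prec\alpha(z)-(-1)^{|y||z|}(x\succ z)\prec\alpha(y)\bigr),
\]
and the earlier proof simply asserts that the right side vanishes, although this is not an axiom. On $V$, by \eqref{Nr3}, the missing identity becomes $\mathfrak l(T(u)\star T(v))\beta(w)=(-1)^{|v||w|}\mathfrak l(T(u)\star T(w))\beta(v)$, and strongness does not imply it.

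Here is a counterexample. Take everything even and $\alpha=\beta=\mathrm{id}$.
\begin{itemize}
\item Let $\mathcal A=\langle a,b\rangle$, with $a\star a=b$ the only nonzero product.
\item Let $V=\mathcal A\oplus\langle p,q\rangle$, with the regular representation on $\mathcal A$. On $\langle p,q\rangle$ set $\mathfrak l(a)=\mathrm{id}$, $\mathfrak l(b)=\mathfrak r(a)=E$ and $\mathfrak r(b)=0$, where $E(p)=q$, $E(q)=0$.
\item Put $T(a)=-2a$, $T(b)=b$, $T(p)=T(q)=0$.
\end{itemize}
A direct check gives \eqref{Nr1}--\eqref{Nr3} and \eqref{Ao10}. $T$ is strong because $\mathcal A$ is commutative and $\mathfrak r(b)=0$ on all of $V$, so $(V,\succ,\prec)$ satisfies \eqref{Aa1}--\eqref{Aa5}. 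Yet $(a\cdot a)\cdot p=4\,b\cdot p=-4q$, whereas $(a\cdot p)\cdot a=2\,p\cdot a=4q$.

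So strongness only makes $(V,\cdot,\beta)$ Hom-pre-Lie; that part of the earlier argument is sound. The clause that $T$ is a homomorphism of Hom-Novikov superalgebras cannot be proved as stated. What you do get is that $T$ is multiplicative and intertwines $\beta$ with $\alpha$, and that the Novikov defect of $(V,\cdot,\beta)$ lies in $\ker T$; hence the clause holds when $T$ is injective. Alternatively, the identity above can be added to the definition of a Hom-anti-pre-Novikov superalgebra. The paper has the same hole, hidden behind ``the other conclusions follow immediately''.
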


\begin{proof} ($1$)
Using Eqs. \eqref{Nr1} and \eqref{Ao2}, for all $u,v,w\in\mathcal H(V)$, we have
\begin{align*}& \beta(u)\succ(v\succ w)-(-1)^{|u||v|}\beta(v)\succ(u\succ w), 
\\&=\mathfrak l(T(\beta(u)))\mathfrak l(T(v))w-(-1)^{|u||v|}\mathfrak l(T(\beta(v)))\mathfrak l(T(u))w\\&=\mathfrak l(\alpha T(u))\mathfrak l(T(v))w-(-1)^{|u||v|}\mathfrak l(\alpha T(v))\mathfrak l(T(u))w\\&=\mathfrak l(T(u)\star  T(v)-(-1)^{|u||v|}T(v)\star T(u))\beta(w)
\\&=-\mathfrak l(T(\beta(l(T(u))v+(-1)^{|u||v|}\mathfrak r(T(v))u)-\mathfrak l(T(\mathfrak l(T(v))u+\mathfrak r(T(u))v)\beta(w)\\&=-(u\cdot v-(-1)^{|u||v|}v\cdot u)\succ\beta(w),
\end{align*}
which yields that Eq. \eqref{Ao3} holds. Take the same procedure, we can prove that Eqs. \eqref{Ao4}-\eqref{Ao5} hold.

($2$) In view of item ($1$), $(V,\succ,\prec,\beta)$ is a Hom-anti-pre-Novikov superalgebra if and only if
\begin{equation}\label{Ao6} (u\cdot v-(-1)^{|u||v|}v\cdot u)\prec\beta(w)=\beta(u)\succ(v\cdot w)-(-1)^{|u||v|}\beta(v)\succ(u\cdot w),\ \ \ \forall~u,v,w\in\mathcal H(V).\end{equation}
Thanks to Eqs. \eqref{Nr1}-\eqref{Nr2} and \eqref{Ao10}-\eqref{Ao2}, we get
\begin{align*}&(u\cdot v-(-1)^{|u||v|}v\cdot u)\prec\beta(w)-\beta(u)\succ(v\cdot w)+(-1)^{|u||v|}\beta(v)\succ(u\cdot w)
\\=&(-1)^{|w|(|u|+|v|)}\mathfrak r(T(\beta(w)))(\mathfrak l(T(u))v+(-1)^{|u||v|}\mathfrak r(T(v))u\\&-(-1)^{|w|(|u|+|v|)+|u||v|}\mathfrak r(T(\beta(w)))(\mathfrak l(T(v))u+\mathfrak r(T(u))v)\\&-\mathfrak l(T\beta(u))(\mathfrak l(Tv)w+(-1)^{|v||w|}\mathfrak r(T(w))v)+(-1)^{|u||v|}\mathfrak l(T\beta(v))(\mathfrak l(T(u))w+(-1)^{|u||w|}\mathfrak r(T(w))u)
\\=&(-1)^{|u|(|v|+|w|)}\mathfrak r(T(v)\star T(w))\beta(u)-(-1)^{|v||w|}\mathfrak r(T(u)\star T(w))\beta(v)\\&-\mathfrak l(T(u)\star T(v)-(-1)^{|u||v|}T(v)\star T(u))\beta(w).\end{align*}
Thus, Eq. (\ref{Ao6}) holds if and only if Eq. (\ref{Ao1}) holds.
The other conclusions follow immediately.

\end{proof}

\begin{lem}\label{inv-ant-R-B-Strong}
Any invertible anti-super-$\mathcal O$-operator on a Hom-Novikov superalgebra is automatically strong.    
\end{lem}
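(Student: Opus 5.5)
The argument follows the proof of Proposition \ref{Prop-inv-O-oper-autom-strong}, using Proposition \ref{Am1} in place of Theorem \ref{Hom-ant-pre-Lie-by-ant-O-oper}. Let $T:V\to\mathcal A$ be an invertible anti-super-$\mathcal O$-operator on the Hom-Novikov superalgebra $(\mathcal A,\star,\alpha)$ associated to $(V,\mathfrak l,\mathfrak r,\beta)$, and define $\succ,\prec$ on $V$ by Eq. \eqref{Ao2}, with $u\cdot v=u\succ v+u\prec v$.

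First, Eq. \eqref{Ao10} reads $T(u)\star T(v)=T(u\cdot v)$, so $u\cdot v=T^{-1}(T(u)\star T(v))$. Since $T$ is invertible, this shows that $(V,\cdot)$ is the transport of $(\mathcal A,\star)$ along the linear isomorphism $T$. Invertibility together with $T\beta=\alpha T$ gives $\beta=T^{-1}\alpha T$. Here I use that $\alpha T=T\beta$, which should be part of the definition of an anti-super-$\mathcal O$-operator, as in Eq. \eqref{cond-ant-O-oper-Hom-Lie1}; it is already used implicitly in the proof of Proposition \ref{Am1}. Hence $(V,\cdot,\beta)$ is a Hom-Novikov superalgebra, isomorphic to $(\mathcal A,\star,\alpha)$ via $T$.

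Second, by Proposition \ref{Am1}(1), Eqs. \eqref{Aa1}--\eqref{Aa4} hold for $(V,\succ,\prec,\beta)$, these being Eqs. \eqref{Ao3}--\eqref{Ao5} up to rearranging signs. The only missing axiom is \eqref{Aa5}, which is Eq. \eqref{Ao6}. By the equivalence $(1)\Leftrightarrow(2)$ of the proposition characterizing Hom-anti-pre-Novikov superalgebras, $(V,\succ,\prec,\beta)$ is Hom-anti-pre-Novikov as soon as $(V,\cdot,\beta)$ is Hom-Novikov and Eqs. \eqref{Aa1}--\eqref{Aa4} hold. The first step gives the former, so $(V,\succ,\prec,\beta)$ is a Hom-anti-pre-Novikov superalgebra, and Proposition \ref{Am1}(2) then gives that $T$ is strong.

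The main obstacle is that the direction $(2)\Rightarrow(1)$ is only asserted in the paper ("direct computation"), so I would verify it. The proof of $(1)\Rightarrow(2)$ shows that the Hom-pre-Lie defect of $\cdot$ equals $2$ times the defect of \eqref{Aa5}, with Eqs. \eqref{Aa1} and \eqref{Aa2} used in the rearrangement. Reading this identity backwards, the vanishing of the Hom-pre-Lie defect (characteristic $0$) forces \eqref{Aa5}.

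Alternatively, to bypass that step, one can substitute $u=T^{-1}x$ and similar into the computation in the proof of Proposition \ref{Am1}(2). There Eq. \eqref{Ao6} minus its right side is expressed through $\mathfrak l,\mathfrak r$ applied to products $T(u)\star T(v)$. Using \eqref{Nr1}--\eqref{Nr2} and the Hom-pre-Lie identity of $\star$, pulled back through $T$, this expression becomes half the Hom-pre-Lie defect of $(V,\cdot,\beta)$, which is zero.
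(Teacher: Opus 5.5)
Your proposal is correct and follows the paper's proof. You transport the Hom-pre-Lie identity from $(\mathcal A,\star,\alpha)$ to $(V,\cdot,\beta)$ through the invertible $T$ (using $T\beta=\alpha T$, which the paper also uses without stating it), combine this with Proposition \ref{Am1}(1) to get Eq.~\eqref{Ao6}, and conclude strongness from Proposition \ref{Am1}(2); your explicit check that, given \eqref{Aa1} and \eqref{Aa2}, the Hom-pre-Lie defect of $\cdot$ is twice the defect of \eqref{Aa5} is exactly the step the paper leaves implicit in ``combining items (1) and (2)'', and it holds.
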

\begin{proof}
Let invertible anti-super-$\mathcal O$-operator on a Hom-Novikov superalgebra $(\mathcal A,\star,\alpha)$ with respect to a representation 
$(V,\mathfrak l,\mathfrak r,\beta)$. In view of Eqs. \eqref{Ao10} and \eqref{Ao2}, we have
\begin{align*}
&(T(u)\star T(v))\star\alpha(T(w))
=-T(\mathfrak l(T(u))v+(-1)^{|u||v|}\mathfrak r(T(v))u)\star\alpha(T(w))
\\=&T[\mathfrak l(T(\mathfrak l(T(u))v+(-1)^{|u||v|}\mathfrak r(T(v))u))\beta(w)+(-1)^{|w|(|u|+|v|)}\mathfrak r(T(\beta(w)))(\mathfrak l(T(u))v+(-1)^{|u||v|}\mathfrak r(T(v))u)]
\\=&T[(u\cdot v)\succ\beta(w)+(u\cdot v)\prec\beta(w)].
\end{align*}
Analogously,
\begin{align*} &(T(v)\star T(u))\star\alpha(T(w))=T[(v\cdot u)\succ\beta(w)+(v\cdot u)\prec\beta(w)],
\\& \alpha(T(u))\star (T(v)\star T(w))=\beta(u)\succ(v\cdot w)+\beta(u)\prec(v\cdot w),\\&\alpha(T(v))\star (T(u)\star T(w))=\beta(v)\succ(u\cdot w)+\beta(v)\prec(u\cdot w).\end{align*}
Since $(\mathcal A,\star,\alpha)$ is a Hom-Novikov superalgebra and $T$ is
invertible, we obtain
\begin{equation*}(u\cdot v)\cdot\beta(w)-(-1)^{|u||v|}(v\cdot u)\cdot\beta(w)=\beta(u)\cdot( v\cdot w)-(-1)^{|u||v|}\beta(v)\cdot( u\cdot w).\end{equation*}
Combining items ($1$) and ($2$) of Proposition \ref{Am1}, Eq.~\eqref{Ao6} holds, which implies that $T$ is strong.    
\end{proof}

\begin{exa}
Consider the 3-dimensional Hom-Novikov superalgebra $(\mathcal{A}, \star, \alpha)$ presented in Example \ref{ex-H-Nov-sup}. Define the even linear map $\mathcal{R}: \mathcal{A} \to \mathcal{A}$ on the basis elements $\{e_1, e_2, f\}$ by
\[
\mathcal{R}(e_1) = -2e_1, \quad \mathcal{R}(e_2) = e_2, \quad \mathcal{R}(f) = -2f.
\]
A direct computation verifies that $\mathcal{R}$ is an invertible anti-Rota-Baxter operator of weight zero on $(\mathcal{A}, \star, \alpha)$. Consequently, by Lemma~\ref{inv-ant-R-B-Strong}, $\mathcal{R}$ is strong. Applying Proposition~\ref{Am1}, the induced structure $(\mathcal{A}, \succ, \prec, \alpha)$, with operations defined for all homogeneous elements $x, y \in \mathcal{H}(\mathcal{A})$ by
\[
x \succ y = -\mathcal{R}(x) \star y \quad \text{and} \quad x \prec y = -x \star \mathcal{R}(y),
\]
forms a Hom-anti-pre-Novikov superalgebra.
\end{exa}

\begin{thm}\label{Am2}
Let $(\mathcal A,\star,\alpha)$ be a Hom-Novikov superalgebra. Then there is a
compatible Hom-anti-pre-Novikov superalgebra structure on $(\mathcal A,\star,\alpha)$ if and
only if there exists an invertible anti-super-$\mathcal{O}$-operator on
$(\mathcal A,\star,\alpha)$.
\end{thm}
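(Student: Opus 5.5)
The plan is to mirror the proof of Corollary \ref{cor-comp-h-anti-pre-Lie-inv-O-op}, now for Hom-Novikov superalgebras, using Proposition \ref{Am1} and Lemma \ref{inv-ant-R-B-Strong}. Here a \emph{compatible} Hom-anti-pre-Novikov structure on $(\mathcal A,\star,\alpha)$ means even operations $\succ,\prec$ on $\mathcal A$ such that $(\mathcal A,\succ,\prec,\alpha)$ is a Hom-anti-pre-Novikov superalgebra and $x\succ y+x\prec y=x\star y$.

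For the forward direction, suppose such $\succ,\prec$ are given. By the equivalence $(1)\Leftrightarrow(3)$ of the proposition preceding Proposition \ref{Am1}, $(\mathcal A,-\mathfrak L_\succ,-\mathfrak R_\prec,\alpha)$ is a representation of $(\mathcal A,\star,\alpha)$, since $\circ=\star$. I take $T=\mathrm{id}_{\mathcal A}$, which commutes with $\alpha=\beta$. Condition \eqref{Ao10} reads
\[
x\star y=-\bigl(-\mathfrak L_\succ(x)y-(-1)^{|x||y|}\mathfrak R_\prec(y)x\bigr)=x\succ y+x\prec y .
\]
This holds by compatibility, once the sign convention for $\mathfrak R_\prec$ is matched with \eqref{Ao2}. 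Hence $\mathrm{id}$ is an invertible anti-super-$\mathcal O$-operator.

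For the converse, let $T:V\to\mathcal A$ be an invertible anti-super-$\mathcal O$-operator associated to $(V,\mathfrak l,\mathfrak r,\beta)$. By Lemma \ref{inv-ant-R-B-Strong}, $T$ is strong. Proposition \ref{Am1}(2) then makes $(V,\succ,\prec,\beta)$, defined by \eqref{Ao2}, a Hom-anti-pre-Novikov superalgebra. Transport this structure along $T$:
\[
x\succ_{\mathcal A}y=T\bigl(T^{-1}(x)\succ T^{-1}(y)\bigr)=-T\bigl(\mathfrak l(x)T^{-1}(y)\bigr),
\]
\[
x\prec_{\mathcal A}y=-(-1)^{|x||y|}T\bigl(\mathfrak r(y)T^{-1}(x)\bigr).
\]
Because $T$ is an even bijection with $T\beta=\alpha T$, so that $T^{-1}\alpha=\beta T^{-1}$, each defining identity \eqref{Aa1}--\eqref{Aa5} transfers verbatim. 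Thus $(\mathcal A,\succ_{\mathcal A},\prec_{\mathcal A},\alpha)$ is a Hom-anti-pre-Novikov superalgebra.

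Finally, I check compatibility by writing $x=T(u)$ and $y=T(v)$:
\[
x\succ_{\mathcal A}y+x\prec_{\mathcal A}y=-T\bigl(\mathfrak l(T(u))v+(-1)^{|u||v|}\mathfrak r(T(v))u\bigr)=T(u)\star T(v)=x\star y
\]
by \eqref{Ao10}.

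The main obstacle is bookkeeping rather than any new idea. First, the sign in the definition of $\mathfrak R_\prec$ must agree with \eqref{Ao2}, or equivalently with the anti-Rota-Baxter specialization, where $\mathfrak r=\mathfrak R_\star$ acts as $\mathfrak R_\star(y)x=x\star y$. Second, the transfer of $\alpha$ through $T^{-1}$ must be handled correctly. I would settle both by checking \eqref{Ao10} for $T=\mathrm{id}$ explicitly on homogeneous elements.
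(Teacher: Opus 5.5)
Your proof is correct and is essentially the paper's argument: in one direction you use $\mathrm{id}_{\mathcal A}$ as an invertible anti-super-$\mathcal O$-operator for $(\mathcal A,-\mathfrak L_\succ,-\mathfrak R_\prec,\alpha)$, and in the other you use Proposition~\ref{Am1} plus transport along $T$, with compatibility read off from \eqref{Ao10}; you also make explicit the appeal to Lemma~\ref{inv-ant-R-B-Strong} for strongness, which the paper leaves implicit. On the sign bookkeeping you flag: \eqref{Ao2} with $T=\mathrm{id}$ forces the Koszul convention $\mathfrak R_\prec(y)x=(-1)^{|x||y|}x\prec y$, which is also what makes the representation axioms reproduce \eqref{Aa3}--\eqref{Aa4} and turns \eqref{Ao10} into the anti-Rota--Baxter identity, so your parenthetical $\mathfrak R_\star(y)x=x\star y$ should carry the factor $(-1)^{|x||y|}$ as well.
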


\begin{proof}
Assume that $(\mathcal A,\succ,\prec,\alpha)$ is a compatible Hom-anti-pre-Novikov superalgebra structure on $(\mathcal A,\star,\alpha)$. Then
\begin{equation*}x\star
y=x\succ y+x\prec y=-(-\mathfrak L_{\succ}
(x) y-(-1)^{|x||y|}\mathfrak R_{\prec} (y)x),\;\;\forall x,y\in\mathcal H(\mathcal A),\end{equation*}
 which means that the
identity map ${\rm Id}:\mathcal A\rightarrow\mathcal A$ is an invertible
anti-super-$\mathcal{O}$-operator on $(\mathcal A,\star,\alpha)$ associated to the
representation $(\mathcal A,-\mathfrak L_{\succ},-\mathfrak R_{\prec},\alpha)$.

On the other hand, assume that $T:V\rightarrow \mathcal A$ is an invertible
anti-super-$\mathcal{O}$-operator on $(\mathcal A,\star,\alpha)$ associated to a
representation $(V, \mathfrak l,\mathfrak r,\beta)$ of $(\mathcal A,\star,\alpha)$. 
In view of Proposition \ref{Am1}, there exists a Hom-anti-pre-Novikov superalgebra
 structure on $V$ and $T(V)=\mathcal A$ given by
Eq.~\eqref{Ao2}. For all $x,y\in\mathcal A$, due to $T$ being invertible, there exist $u,v\in V$ such that $x=T(u),y=T(v)$. Thus
we have
\begin{align*}
x\star y&=T(u)\star T(v)=-T(\mathfrak l(T(u))v+(-1)^{|u||v|}\mathfrak r(T(v))u)=T(u\succ v+u\prec v)\\
&=T(u)\succ T(v)+T(u)\prec T(v)=x\succ y+x\prec y,
\end{align*}
which indicates that $(\mathcal A,\succ,\prec,\alpha)$ is a compatible Hom-anti-pre-Novikov superalgebra
 structure on $(\mathcal A,\star,\alpha)$.
\end{proof}

\end{document}